\documentclass[11pt,leqno]{article}
\usepackage[T1]{fontenc}
\usepackage{authblk}
\usepackage[margin=1.25in]{geometry}
\usepackage{amsmath,amssymb,amsthm,mathtools,booktabs,longtable,array}
\usepackage{dsfont,cite,microtype,hyperref}
\hypersetup{colorlinks,breaklinks,linkcolor=blue,citecolor=blue,urlcolor=blue,
 pdftitle={On prediction from expert advice with more than five experts},
 pdfauthor={Jeff Calder and Nadejda Drenska}}

\newcommand{\R}{\mathbb{R}}

\renewcommand{\v}{\mathbf{v}}
\newcommand{\one}{\mathds{1}}
\newcommand{\E}{{\mathbb E}}

\renewcommand{\phi}{\varphi}
\newcommand{\eps}{\varepsilon}
\renewcommand{\P}{\mathbb{P}}
\DeclareMathOperator{\csch}{csch}

\newcommand{\dd}{\,dt}

\DeclareMathOperator{\tr}{tr}

\newtheorem{theorem}{Theorem}
\newtheorem{proposition}[theorem]{Proposition}
\newtheorem{lemma}[theorem]{Lemma}
\newtheorem{corollary}[theorem]{Corollary}
\theoremstyle{definition}
\newtheorem{remark}[theorem]{Remark}

\numberwithin{equation}{section}
\numberwithin{theorem}{section}

\title{On prediction from expert advice with more than five experts\thanks{{\bf Funding:} JC acknowledges funding from an Albert and Dorothy Marden Professorship, a Simons Fellowship, and National Science Foundation Grant DMS:2436333. ND acknowledges funding from National Science Foundation Grant DMS:2407839. {\bf Source Code:} The verification scripts referred to in the text are available at \url{https://github.com/jwcalder/Beyond-Five-Experts}. A Lean~4 formalization of the main results is archived at \url{https://doi.org/10.5281/zenodo.23136818}. The proof development and the preparation of this paper were assisted by ChatGPT and Claude. The authors take full responsibility for the contents of the paper.}}

\author[1]{Jeff Calder}
\affil[1]{School of Mathematics, University of Minnesota}
\author[2]{Nadejda Drenska}
\affil[2]{Department of Mathematics, Louisiana State University}

\begin{document}
\maketitle
\begin{abstract}
We prove that no single rank ordered adversary strategy is globally optimal for the prediction with expert advice problem with six or more experts, in both the geometric stopping and finite time horizon settings. The proof is based on establishing a leading order correction when one expert moves far ahead of the others. This allows us to connect optimal strategies between $n$ and $j<n$ experts and utilize recent results on the exact optimality set for the five expert problem. 
\end{abstract}
\tableofcontents

\section{Introduction}\label{sec:introduction}

The prediction with expert advice problem, dating back to Hannan~\cite{hannan1957approximation} and Cover~\cite{cover1967behavior}, involves a player attempting to synthesize advice from multiple experts, in order to make a prediction about future events. The player's performance is measured by their regret with respect to the best performing expert, and much of the analysis is in the \emph{worst case setting}, where we assume the experts are controlled by an adversary whose goal is to maximize the players regret (while the goal of the player is to minimize). Heuristic strategies, like the multiplicative weights algorithm~\cite{littlestone1994weighted,vovk1990aggregating,freund1997decision,cesa1997use,haussler1995tight} have long given good performance, but the question of optimal strategies has only been addressed more recently. 

The games involve repeated play between the player and the adversary for either a fixed number of steps known to both players (i.e., the \emph{finite-horizon} problem) or until a random stopping time that is geometrically distributed (i.e., the \emph{geometric stopping} problem). Gravin, Peres and Sivan~\cite{gravin2016towards} solved the geometric stopping game for three experts and showed that the adversary's optimal strategy is the so-called COMB strategy, which ranks the experts by their current gain and advances either the experts of odd rank or those of even rank, each with probability one half. They conjectured that COMB was asymptotically optimal for every number of experts. 

Drenska and Kohn~\cite{drenska2017pde,drenska2020prediction} developed a useful partial differential equations (PDE) framework for understanding the problem, which built upon previous work interpreting PDEs as continuum limits of two-player games~\cite{kohn2006deterministic,kohn2010deterministic,peres2009tug,peres2008tug}. Drenska and Kohn showed that as the number of turns in the game tends to infinity, the rescaled value function converges to the viscosity solution~\cite{crandall1992user} of a degenerate elliptic PDE in the geometric stopping setting, and of a degenerate parabolic PDE in the finite horizon setting~\cite{drenska2020prediction}. In the normalization of this paper, the geometric stopping PDE reads
 \begin{equation}\label{eq:pde_intro}
 u_n-\frac12\max_{\v\in\{0,1\}^n}\v^TD^2u_n\,\v=\max_i x_i \quad\text{on }\R^n,
 \end{equation}
where $x_i$ is the rescaled regret with respect to expert $i$. The solution of \eqref{eq:pde_intro} encodes the asymptotically optimal strategies for both players---in particular, the optimal strategy for the player is the probability distribution $\nabla u$ over the experts to follow, while the optimal strategy for the adversary is determined by the $\v$ that attains the maximum in \eqref{eq:pde_intro} at each $x$. In particular, the optimal adversary strategy is split the experts into two groups based on the positions of the ones and zeros in the optimal $\v$, and to advance one group or the other, each with probability one half. The approach has been extended to other related problems as well, such as history-dependent experts~\cite{drenska2023pde,calder2021asymptotically,calder2023online}. Since $u_n$ is invariant to permutations of coordinates, it suffices to consider the ordered sector $x_1\geq\cdots\geq x_n$, in which case the COMB strategy is written as $1010\cdots$. A direction $\v$ is globally optimal if it attains the maximum in \eqref{eq:pde_intro} at every point $x$ in the ordered sector; when $u_n$ is not known to be twice differentiable, this is understood in the weak sense of Section~\ref{sec:main_results}. Using the Drenska-Kohn framework, COMB has been shown to be globally optimal for $n\leq4$~\cite{abbasi2017near,gravin2016towards,drenska2020prediction,bayraktar2020asymptotic,bayraktar2020finite}, while Kobzar, Kohn and Wang~\cite{kobzar2020newprediction} showed that, in the finite-horizon problem and at leading order as the horizon tends to infinity, COMB is at least as powerful as a fully randomized adversary for any number of experts. There is extensive related work treatng regimes in which random play is optimal~\cite{abernethy2008random}, lower bounds for multiplicative weights~\cite{gravin2017tight}, an unknown horizon~\cite{luo2014towards}, potential-based bounds for general $n$~\cite{kobzar2020newprediction,kobzar2020newgeometric}, and limited or malicious adversaries~\cite{bayraktar2021prediction,bayraktar2021malicious}.

The first evidence that the COMB conjecture may be false in general appeared in \cite{chase2019experimental,calder2026numerical}. Based on computed numerical solutions of \eqref{eq:pde_intro} up to $n=10$ experts, Calder, Drenska, and Mosaphir \cite{calder2026numerical} made three conjectures: (i)~COMB is globally optimal only for $n\leq4$; (ii)~for $n=5$, the direction $10100$ is the only globally optimal direction; and (iii)~for $n\geq6$, no direction is globally optimal. In addition, it was left as an open problem in \cite{calder2026numerical} to derive an explicit formula for the solution based on the conjectured optimality of the direction $10100$. Calder and Drenska showed that the direction $10100$ is globally optimal in both the geometric \cite{calder2026explicit} and finite horizon \cite{calder2026finite} settings by finding explicit solutions. Independently Bayraktar, Ekren, and Kolliopoulos \cite{bayraktar2026prediction} arrived at an equivalent solution formula for the geometric problem for $n=5$ experts. 

In this paper, we prove conjecture (iii) in both the geometric stopping and finite horizon settings, and we complete the proof of conjecture (ii) by showing that, up to complementation, $10100$ is the only globally optimal direction for five experts. The proof of conjecture (iii) has two parts. We first identify the leading-order behavior of $u_n$ when one expert is far ahead of the others, and deduce that a globally optimal direction selects at most one expert besides the leader. Since the first five bits of a globally optimal direction are optimal for five experts, the five-expert uniqueness above forces these bits to be $10100$. This already selects one expert besides the leader, so all later bits vanish and only the candidate $1010\cdots0$ remains. We exclude this candidate using trace equations on the sector where the bottom $n-3$ experts are tied, which show that global optimality would force the second-order growth of $u_n$ along the ray $(s,s,s,0,\dots,0)$ to exceed what \eqref{eq:pde_intro} allows when $n\geq6$. A Laplace transform in the horizon then transfers the result to the finite-horizon problem. The techniques in the paper also produce other intermediate results about optimal strategies when one expert is far ahead, and give some new bounds on the value function $u_n(0)$ at the origin, which are described in Section \ref{sec:main_results}.

Together with the companion papers~\cite{calder2026explicit,calder2026finite}, these results resolve all three conjectures of~\cite{calder2026numerical}, as well as the open problem posed there. Conjecture (i) holds: COMB is globally optimal for $n\leq4$~\cite{gravin2016towards,drenska2020prediction,bayraktar2020asymptotic}, it is not globally optimal for $n=5$~\cite{calder2026explicit,calder2026finite}, and Theorem~\ref{thm:nonexistence} excludes it, along with every other direction constant in rank order, for $n\geq6$. Conjecture (ii) holds: $10100$ is globally optimal for five experts~\cite{calder2026explicit,calder2026finite}, and Theorem~\ref{thm:five} shows that, up to complementation, no other direction is. Conjecture (iii) is Theorem~\ref{thm:nonexistence}, and Theorem~\ref{thm:finite} extends it to the finite-horizon problem. The explicit five-expert solutions of~\cite{calder2026explicit,calder2026finite} answer the open problem. In particular, COMB is not globally optimal for any $n\geq5$. Our results do not identify the optimal adversarial strategy for $n\geq6$, which must vary with the state within the ordered sector, and they do not settle whether $u_n$ is $C^2$ for $n\geq6$.

No result of this paper depends on the numerical computations of~\cite{calder2026numerical}, which serve only as motivation. We use the five-expert results of~\cite{calder2026explicit} in the proof of Theorem~\ref{thm:five}, and the trace equations of~\cite[\S3.2]{calder2026explicit} in Section~\ref{sec:last}, where the propagator formula of~\cite[Lemma~3.3]{calder2026explicit} is verified directly; the verification in~\cite{calder2026explicit} that $10100$ attains the maximum in the Hamiltonian is computer assisted. The main results are also formalized in the Lean~4 proof assistant~\cite{calder2026lean}; the formalization takes as hypotheses the properties of the geometric and finite-horizon value functions that we recall from the literature or derive from their control representations.

The rest of the paper is organized as follows. Section~\ref{sec:main_results} states the main results. Section~\ref{sec:basic} collects preliminary facts about $u_n$, including the linear supersolution inequalities, the $C^{1,1}$ estimate, and prefix inheritance, and then uses prefix inheritance to prove that, up to complementation, $10100$ is the only globally optimal direction for five experts. Section~\ref{sec:profile} establishes the one-leader asymptotics and excludes directions with three or more ones. Section~\ref{sec:last} excludes the last candidate $1010\cdots0$, and Section~\ref{sec:finite} treats the finite-horizon problem.
\section{Main results}\label{sec:main_results}

Let $u_n$ be the asymptotic value function for the $n$-expert geometric stopping time problem, which is the viscosity solution of the PDE
\begin{equation}\label{eq:pde}
 u_n-\frac12\max_{\v\in\{0,1\}^n}\v^TD^2u_n\,\v=\phi_n,
 \qquad \phi_n(x)=\max_i x_i.
\end{equation}
We recall that $u_n$ is invariant to permutations of the coordinates, is translation equivariant in the ones direction, i.e., $u_n(x + s\one) = u_n(x) + s$, and $u_n-\phi_n$ is bounded \cite{calder2026numerical}. For $n\leq 5$ it is known that $u_n\in C^2(\R^n)$ \cite{bayraktar2020asymptotic,calder2026explicit}, while for $n\geq 6$ classical regularity is an open problem, and $u_n$ must be interpreted in the weaker viscosity sense \cite{crandall1992user}. In fact, we show in Section~\ref{sec:basic} that $u_n\in C^{1,1}(\R^n)$ for all $n$, which is used throughout the paper, and Lemma~\ref{lem:regularity} below gives an explicit bound on its Hessian. 

Writing $D_{\v}=\sum_iv_i\partial_{x_i}$ for the derivative in the direction $\v$, we have $\v^TD^2u_n\,\v=D_{\v}^2u_n$, and we call $\max_{\v}D_{\v}^2u_n$ the Hamiltonian. Directions are written in decreasing rank order and complementation identifies equivalent directions. Indeed, when $u$ is smooth, the translation equivariance implies $\nabla u_n \cdot \one = 1$ and so $D^2u_n\one = 0$, from which it follows that the directions $\v$ and $\one-\v$ yield the same quadratic form. Thus, we normalize the first bit of any $\v$ we work with to be one. Global optimality of $\v$ entails the solution to \eqref{eq:pde} $u_n$ solving the linear equation
\begin{equation}\label{eq:fixed}
 u_n-\frac12D_{\v}^2u_n=x_1
 \quad\text{on }\{x_1>x_2>\cdots>x_n\},
\end{equation}
as well, where the equality is understood in the viscosity sense. The normalization of the first bit is legitimate also for this viscosity formulation, by the explicit form of \eqref{eq:fixed} on segments (see the discussion after Lemma~\ref{lem:lines}). For a constant rank direction $\v$, global optimality implies that $\v$ attains the Hamiltonian maximum at every point of the strictly ordered sector, and hence \eqref{eq:fixed} holds there. All subsequent nonexistence results are established by disproving this necessary condition. 

Throughout the paper we set $\kappa=\sqrt2$ and $c=1/(2\kappa)$, the constants in the two-expert solution $u_2(x)=\phi_2(x)+c\,e^{-\kappa|x_1-x_2|}$~\cite[(3.2)]{calder2026explicit}. Our main theorems are based on a new result establishing the leading order behavior of the value function $u_n$ when one expert is far ahead of the others.
\begin{theorem}[Universal one-leader profile]\label{thm:profile}
For $m\geq1$, let $E_1,\ldots,E_m$ be independent exponential random variables of rate $2\kappa$, and define $h_m:\R^m\to\R$ by
\begin{equation}\label{eq:profile}
 h_m(z)=\frac1{4\kappa}\E\exp\!\left(\kappa
                 \max_{1\leq i\leq m}(z_i+E_i)\right).
\end{equation}
For $n\geq2$, write $x=(s,z)$ with $s\in\R$ and $z\in\R^{n-1}$. Then
\[
 u_n(s,z)-s\sim e^{-\kappa s}h_{n-1}(z)
 \quad\text{as }s\to\infty,
\]
locally uniformly in $z\in\R^{n-1}$. Moreover, for each fixed $z$, the function $s\mapsto e^{\kappa s}(u_n(s,z)-s)$ is nondecreasing on $(\max_iz_i,\infty)$.
\end{theorem}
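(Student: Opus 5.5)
The plan is to reduce the statement to a linear problem for the rescaled correction $g(s,z)=e^{\kappa s}(u_n(s,z)-s)$ and to identify $h_{n-1}$ as its limit through a probabilistic (optimal stopping / controlled diffusion) representation. First I would compute what the PDE \eqref{eq:pde} says about $g$ on the region $\{s>\max_i z_i\}$, where $\phi_n=s$. Since $u_n$ is $C^{1,1}$ and translation equivariant, the complementation normalization lets us take $\v=(1,\one_S)$ with $S\subseteq\{2,\dots,n\}$. Using $\kappa^2=2$, the direct computation
\[
D_{\v}^2\bigl(e^{-\kappa s}g\bigr)=e^{-\kappa s}\bigl(\kappa^2 g-2\kappa D_{(1,\one_S)}g+D_{(1,\one_S)}^2 g\bigr)
\]
turns \eqref{eq:pde} into
\[
\min_{S}\Bigl(\kappa D_{(1,\one_S)}g-\tfrac12 D_{(1,\one_S)}^2g\Bigr)=0 ,
\]
understood in the viscosity / a.e. sense. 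This is a Bellman equation for a degenerate controlled process in which the leader coordinate drifts and each chosen coordinate carries a discount-free exponential weight. By translation equivariance, $g(s,z)=e^{\kappa s}w(0,z-s\one)$ with $w=u_n-\phi_n$, so the asymptotics as $s\to\infty$ concern the behaviour of $w(0,y)$ as $y\to-\infty$.

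Second, I would verify that the candidate $H(s,z)=h_{n-1}(z-s\one)e^{\kappa s}=h_{n-1}(z)$ solves this limiting equation, with the empty set $S$ as the optimizer. Here $H$ is $s$-independent, so the $S=\emptyset$ term vanishes. For nonempty $S$ the required inequality reduces to $\psi''\leq 2\kappa\psi'$ for $\psi(t)=h_{n-1}(z+t\one_S)$. I would prove this from the representation \eqref{eq:profile} using the memoryless property of the rate-$2\kappa$ exponentials: shifting the coordinates in $S$ by $t$ acts on $\max_i(z_i+E_i)$ like a reflected drift, and the factor $e^{\kappa(\cdot)}$ against density $2\kappa e^{-2\kappa E}$ gives the identity $\frac{d}{dt}\bigl(e^{-2\kappa t}\psi'\bigr)\le0$. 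As a check, for $m=1$ we have $\E e^{\kappa E}=2$, so $h_1(z)=c\,e^{\kappa z}$, which recovers exactly the two-expert formula $u_2-s=c\,e^{-\kappa(s-z)}$.

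Third comes the monotonicity claim, that $\partial_s g\geq 0$ on $s>\max z$. I would obtain it from the linear supersolution inequalities of Section~\ref{sec:basic} in the direction $\v=e_1$. These give $\kappa\partial_s g-\tfrac12(\partial_s^2g-2\kappa\partial_sg+\kappa^2g)+g\geq0$, i.e.\ $\partial_s^2 g-2\kappa\partial_s g\le0$. Hence $e^{-2\kappa s}\partial_s g$ is nonincreasing in $s$. Combined with boundedness of $g e^{-\kappa s}$ as $s\to\infty$ (which excludes $\partial_s g<0$ somewhere, since that would force $g\to-\infty$ exponentially fast while $w\ge0$), this yields $\partial_s g\geq0$. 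Thus $g(s,z)$ increases to a limit $g_\infty(z)\in(0,\infty]$.

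The main obstacle is identifying $g_\infty=h_{n-1}$, i.e.\ the two-sided bound with uniqueness. For the upper bound I would use the dynamic programming/control representation of $u_n$: fix the adversary-independent player strategy that, as a supersolution, is built from $\phi_n$ plus the profile. Concretely, I would show that $s+e^{-\kappa s}h_{n-1}(z)$, patched with $u_{n-1}$-type behaviour near $s=\max z$, is a viscosity supersolution of \eqref{eq:pde} on $\{s>\max z+R\}$ up to $O(e^{-2\kappa s})$ errors. Comparison then gives $\limsup g\leq h_{n-1}$. For the lower bound I would fix the adversary strategy $\v=(1,0,\dots,0)$ plus whatever is optimal among the trailing experts. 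In the limit this produces exactly the process behind \eqref{eq:profile}: the leader gap diffuses until it is caught, and each trailing expert's overshoot is an independent $\mathrm{Exp}(2\kappa)$ variable, which is the hitting law of drifted Brownian motion. A subsolution comparison then gives $\liminf g\geq h_{n-1}$. Local uniformity in $z$ follows from Dini's theorem, since $g$ is monotone in $s$ and the limit $h_{n-1}$ is continuous. I expect the hardest part to be matching the boundary data where the leader is caught, which is where the exponential overshoots in $h_{n-1}$ have to be justified rigorously.
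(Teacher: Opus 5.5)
Your rescaling is correct, and two of your steps match the paper. The interior computation in Step~2 is the residual identity \eqref{eq:barrierresidual} combined with the sign inequality \eqref{eq:deficit}. Step~3 is the monotonicity part of Lemma~\ref{lem:limit}; your displayed inequality double counts $\kappa\partial_s g$, but the conclusion $\partial_s^2g\le2\kappa\partial_sg$ is right. Dini's theorem is a valid substitute for the paper's equicontinuity argument.

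The proof, however, rests on the two bounds that your last paragraph only sketches, and neither sketch works as stated. For the upper bound, comparison on $\{s>\max z+R\}$ cannot give the sharp constant. Your own Step~2 shows that $s+Ke^{-\kappa s}h_{n-1}(z)$ has residual exactly $\tfrac K2e^{-\kappa s}(2\kappa D_Jh_{n-1}-D_J^2h_{n-1})\ge0$ there for every $K>0$. So there are no $O(e^{-2\kappa s})$ interior errors to absorb, and the comparison only carries the boundary ratio $\sup(g/h_{n-1})$ on $\{s=\max z+R\}$ into the interior. The only bound available there, \eqref{eq:pairbounds}, gives $g\le(n-1)h_{n-1}$, hence only $\limsup g\le(n-1)h_{n-1}$. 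Ratio $1$ on that boundary is essentially the statement you are trying to prove.

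The missing idea is that no patching is needed. The symmetric extension $\bar u_n=x_{(1)}+e^{-\kappa x_{(1)}}h_{n-1}(x_{(2)},\dots,x_{(n)})$ is a \emph{global} viscosity supersolution (Proposition~\ref{prop:barrier}). At every top tie its derivative into the sector along $e_1-e_2$, namely $1-e^{-\kappa M}\bigl(\kappa h_{n-1}+(h_{n-1})_1\bigr)$, is strictly negative for $n\ge4$. The paper proves this with Chebyshev's integral inequality, see \eqref{eq:cusp}. The result is a downward cusp that no test function can touch from below; for $n\le3$ the slope vanishes and $\bar u_n=u_n$. Comparison on all of $\R^n$ (Lemma~\ref{lem:comparison}) then gives $u_n\le\bar u_n$ with no boundary data at all.

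For the lower bound, ``$e_1$ plus whatever is optimal among the trailing experts'' is not a strategy, and no subsolution is exhibited. The natural candidate $\bar u_n$ is not a subsolution for $n\ge4$: its downward cusps are incompatible with convexity of $u_n$. Step~2 also does not identify the limit. Since $g_\infty$ is independent of $s$, the limiting equation only says $D_S^2g_\infty\le2\kappa D_Sg_\infty$ for nonempty $S$, and $Kh_{n-1}$ satisfies this for every $K>0$, homogeneity included.

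The paper closes this gap without constructing any control. The monotone limit $h_\star$ inherits the following properties:
\begin{itemize}
\item convexity and symmetry;
\item the homogeneity $h_\star(z+a\one)=e^{\kappa a}h_\star(z)$;
\item the lower bound $h_\star\ge c\,e^{\kappa\max z}$;
\item concavity in each scale variable $\xi_i=e^{2\kappa z_i}$, from one-dimensional comparison along $e_1+e_{i+1}$ (Lemma~\ref{lem:limitineq}).
\end{itemize}
Lemma~\ref{lem:minimal} then shows that any such function dominates $h_m$, by induction on $m$. One sends a coordinate to $-\infty$. At the full tie, convexity and homogeneity give $w(0)\ge\frac{2m}{2m-1}\underline w(0)$, which matches $h_m(0)=\frac{2m}{2m-1}h_{m-1}(0)$. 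One then interpolates using scale concavity, together with the fact that $h_m$ is affine in $\xi_i$ wherever $z_i$ is not the unique maximum.

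Convexity of $u_n$ is indispensable at the full-tie step and does not appear in your plan. A probabilistic lower bound might be possible in principle. It would need an explicit adversary control whose payoff reproduces $\tfrac1{4\kappa}\E e^{\kappa\max_i(z_i+E_i)}$, including the play after the leader is caught, and that is exactly the part you leave open.
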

\begin{remark}[Notation]\label{rem:asymptotics}
In Theorem \ref{thm:profile}, and the rest of this paper, we use the asymptotic notation $f\sim g$ as $s\to s_0$ to mean that $\lim_{s\to s_0}f(s)/g(s)=1$. When $f$ and $g$ also depend on other variables, we say that $f\sim g$ holds locally uniformly in those variables if $f/g\to1$ uniformly on compact sets. 
\end{remark}
The profile $h_m$ is an elementary finite sum of exponentials (Lemma~\ref{lem:h_m}); in particular $h_m(0)=(2m)!!/(4\kappa(2m-1)!!)$, so for six experts 
\[u_6(s,0,\dots,0)-s\sim\tfrac{64}{63\sqrt2}e^{-\sqrt2s} \ \ \text{as}  \ \ s\to\infty.\]

The definition of the profile also allows us to explicitly state our $C^{1,1}$ regularity result. 
\begin{lemma}[Regularity]\label{lem:regularity}
For every $n\geq2$, $u_n\in C^{1,1}(\R^n)$, and
\[
 0\preceq D^2u_n(x)\preceq8\bigl(u_n(x)-\phi_n(x)\bigr)I\preceq8h_{n-1}(0)\,e^{-\sqrt2(x_{(1)}-x_{(2)})}I
\]
for almost every $x\in\R^n$, where $x_{(1)}\geq\cdots\geq x_{(n)}$ is the decreasing rearrangement of $x$. In particular, $\nabla u_n$ is Lipschitz on $\R^n$ with constant $8h_{n-1}(0)$.
\end{lemma}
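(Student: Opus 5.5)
We prove the three inequalities in turn: convexity, the trace bound from the PDE, and the exponential bound from the monotonicity in Theorem~\ref{thm:profile}.

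\emph{Lower bound.} Convexity of $u_n$ gives $D^2u_n\succeq0$ in the distributional sense. We obtain it from the game/control representation: for each fixed adversary policy the payoff is convex in the starting point, because $\phi_n$ is convex and the dynamics are translations. Taking the supremum over policies preserves convexity. Alternatively we can use the comparison principle for \eqref{eq:pde}, since the Hamiltonian is convex in $D^2u$ and the right-hand side $\phi_n$ is convex.

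\emph{Middle inequality.} The key observation is that the Hamiltonian dominates a fixed multiple of the trace. Let $\v$ be uniform on $\{0,1\}^n$. Then
\[
\E\,\v\v^T=\tfrac14(I+\one\one^T).
\]
Translation equivariance gives $D^2u_n\one=0$, so in the viscosity/weak sense
\[
\max_{\v}D^2_{\v}u_n\geq \E\,\v^TD^2u_n\v=\tfrac14\tr D^2u_n .
\]
The PDE then gives $\tfrac18\tr D^2u_n\leq \tfrac12\max_{\v}D_{\v}^2u_n=u_n-\phi_n$. Since $D^2u_n\succeq0$, the largest eigenvalue is at most the trace, so $D^2u_n\preceq 8(u_n-\phi_n)I$.

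To make this rigorous without assuming $C^2$, we argue via semiconcavity. Convexity gives a lower bound on the Hessian. For an upper bound, if a paraboloid $P$ touches $u_n$ from below at $x_0$, the supersolution property gives $\tfrac18\tr D^2P\leq u_n(x_0)-\phi_n(x_0)$ (after symmetrizing $D^2P$ along $\one$, using equivariance). This yields a uniform semiconcavity bound on compact sets where $u_n-\phi_n$ is bounded. A function that is both convex and semiconcave is $C^{1,1}$, and Alexandrov second derivatives exist almost everywhere, where the inequality holds pointwise. We expect this viscosity-to-a.e.\ step, in particular handling the $\one$ direction carefully, to be the main technical obstacle. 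Our first attempt would be inf-convolution, which preserves the supersolution property up to a shift.

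\emph{Last inequality.} By permutation invariance, assume $x_1=x_{(1)}$ and write $x=(s,z)$ with $s=x_{(1)}$ and $\max z=x_{(2)}$. By translation equivariance we may shift so that $\max z=0$; then $u_n-\phi_n=u_n(s,z)-s$. Theorem~\ref{thm:profile} says $e^{\kappa s}(u_n(s,z)-s)$ is nondecreasing in $s$ and tends to $h_{n-1}(z)$. Hence
\[
u_n-\phi_n\leq e^{-\kappa s}h_{n-1}(z)\leq e^{-\kappa(x_{(1)}-x_{(2)})}h_{n-1}(0),
\]
where the last step uses that $h_{n-1}$ is monotone in each $z_i$ and $z\leq0$. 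At ties, where $s=\max z$, the bound follows by continuity. The Lipschitz constant of $\nabla u_n$ is then $\operatorname{ess\,sup}\|D^2u_n\|\leq 8h_{n-1}(0)$.
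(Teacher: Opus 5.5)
Your outline matches the paper's: convexity for the lower bound, averaging $D_{\v}^2$ over $\v$ uniform on $\{0,1\}^n$ (via $\E\,\v\v^T=\tfrac14(I+\one\one^T)$) to get the constant $8$, eigenvalue $\leq$ trace, and then the exponential bound. Your last step is sound and not circular, because Theorem~\ref{thm:profile} is proved without this lemma. Monotonicity plus the limit just recovers the barrier inequality \eqref{eq:upper}, which the paper invokes directly.

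The gap is in the passage from the viscosity condition to $C^{1,1}$, which is the real content of the middle inequality. Put $\Pi=I-\one\one^T/n$. Your symmetrized bound $\tfrac18\tr(\Pi D^2P(x_0)\Pi)\leq(u_n-\phi_n)(x_0)$, for every $P$ touching from below, is correct. But it does not by itself ``yield a uniform semiconcavity bound.'' It constrains only lower test functions, so it is a viscosity upper bound on the projected Laplacian. Semiconcavity, by contrast, is an upper bound on second differences in every direction. Alexandrov points cannot close this either, since they only see the absolutely continuous part of the Hessian measure: $\phi_n$ is convex and satisfies the bound almost everywhere, yet it has kinks. What is needed is a distributional inequality, and you leave that step open.

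Your inf-convolution idea can be completed. The function $u_n^\epsilon(x)=\inf_y\{u_n(y)+|x-y|^2/(2\epsilon)\}$ is convex, semiconcave, translation equivariant, and a supersolution up to $O(\epsilon)$, because $u_n$ is Lipschitz. It is therefore $C^{1,1}$, the bound holds for it almost everywhere and hence in $\mathcal D'$, and you then let $\epsilon\downarrow0$. As written, however, this step is asserted rather than proved.

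The paper avoids all of this by never working with the trace at the viscosity level. For each fixed binary $\v$, the inequality $u_n-\tfrac12D_{\v}^2u_n\geq\phi_n$ is one-dimensional. Comparison with a quadratic on each line (Lemma~\ref{lem:lines}(iii)) gives the second-difference bound \eqref{eq:seconddiff}, which integrates against test functions to $D_{\v}^2u_n\leq2(u_n-\phi_n)$ in $\mathcal D'(\R^n)$. These linear inequalities are then averaged directly, with $D_{\one}^2u_n\geq0$ from convexity replacing your symmetrization. Finally, a nonnegative Hessian measure whose trace has a locally bounded density is absolutely continuous with bounded density, which is the $C^{1,1}$ statement.
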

The player's asymptotically optimal strategy $\nabla u_n$ follows expert $i$ with probability $\partial_{x_i}u_n$. Lemma~\ref{lem:regularity} shows that this strategy is Lipschitz, and that its local Lipschitz constant decays exponentially in the gap between the two leading experts. The lemma is proved at the end of Section~\ref{sec:barrier}.

The convergence in Theorem~\ref{thm:profile} also holds for first derivatives, which describe the player's asymptotically optimal strategy.
\begin{corollary}[The player's strategy when one expert leads]\label{cor:player}
In the setting of Theorem~\ref{thm:profile},
\begin{equation}\label{eq:player}
 \partial_{z_i}u_n(s,z)\sim e^{-\kappa s}\,\partial_{z_i}h_{n-1}(z)\quad(1\leq i\leq n-1),\qquad
 1-\partial_su_n(s,z)\sim\kappa e^{-\kappa s}h_{n-1}(z)
\end{equation}
as $s\to\infty$, locally uniformly in $z\in\R^{n-1}$. In particular, the player follows the trailing expert with coordinate $z_i$ with probability $\partial_{z_i}u_n(s,z)\sim e^{-\sqrt2s}\,\partial_{z_i}h_{n-1}(z)$. When the trailing experts are tied, each is followed with probability $\partial_{z_i}u_n(s,0)\sim\kappa h_{n-1}(0)e^{-\sqrt2s}/(n-1)$, which is $\tfrac{64}{315}e^{-\sqrt2s}$ for six experts.
\end{corollary}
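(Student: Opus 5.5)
We upgrade the zeroth-order convergence of Theorem~\ref{thm:profile} to first derivatives by combining it with the uniform $C^{1,1}$ bound of Lemma~\ref{lem:regularity} and an interpolation argument, using translation equivariance to handle the $s$-derivative.

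\textbf{Rescaling.} Set $w(s,z)=u_n(s,z)-s$ and $W_s(z)=e^{\kappa s}w(s,z)$. By Theorem~\ref{thm:profile}, $W_s\to h_{n-1}$ uniformly on compact subsets of $\R^{n-1}$ as $s\to\infty$.

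\textbf{Uniform $C^{1,1}$ bounds in $z$.} On a compact set $K$ of $z$ and for $s$ large, the gap $x_{(1)}-x_{(2)}$ equals $s-\max_i z_i$. Lemma~\ref{lem:regularity} then gives
\[
|D^2_z w(s,z)|\le 8h_{n-1}(0)\,e^{-\kappa(s-\max_i z_i)}.
\]
Hence $D_z^2W_s$ is bounded uniformly in $s$ on $K$, so the family $\{W_s\}$ is bounded in $C^{1,1}(K)$. By Arzel\`a--Ascoli, every sequence $s_k\to\infty$ has a subsequence along which $\nabla_z W_{s_k}$ converges uniformly on $K$. Since $W_s\to h_{n-1}$ uniformly, any such limit must equal $\nabla h_{n-1}$. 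Thus $e^{\kappa s}\partial_{z_i}u_n\to\partial_{z_i}h_{n-1}$ uniformly on compacts.

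To avoid compactness, one can instead use the elementary interpolation inequality: for $f=W_s-h_{n-1}$,
\[
|\nabla f(z)|\le \frac{2}{\delta}\sup_{B_\delta(z)}|f|+\delta\,\operatorname{Lip}(\nabla f).
\]
First take $s\to\infty$, then $\delta\to0$. Here $h_{n-1}$ is smooth, being an elementary finite sum of exponentials by Lemma~\ref{lem:h_m}.

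\textbf{Converting to asymptotic equivalence.} To pass from convergence to the relation $\sim$, one needs $\partial_{z_i}h_{n-1}>0$. This holds by differentiating under the expectation: $\partial_{z_i}h_{n-1}(z)$ equals $\tfrac14$ times $\E\bigl[e^{\kappa\max_j(z_j+E_j)}\one\{\text{argmax}=i\}\bigr]$, which is positive because each index attains the max with positive probability.

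\textbf{The $s$-derivative.} Translation equivariance gives $\partial_s u_n+\sum_i\partial_{z_i}u_n=1$ wherever the derivatives exist, and $u_n\in C^1$. Therefore
\[
1-\partial_s u_n=\sum_i\partial_{z_i}u_n\sim e^{-\kappa s}\sum_i\partial_{z_i}h_{n-1}(z),
\]
using positivity of each term. Differentiating under the expectation in \eqref{eq:profile} along $z\mapsto z+t\one$ gives $\sum_i\partial_{z_i}h_{n-1}=\kappa h_{n-1}$, which yields the second relation in \eqref{eq:player}.

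\textbf{Tied case.} When $z=0$, symmetry of $h_{n-1}$ makes each $\partial_{z_i}h_{n-1}(0)$ equal to $\kappa h_{n-1}(0)/(n-1)$. For $n=6$,
\[
\sqrt2\cdot\frac{64}{63\sqrt2}\cdot\frac15=\frac{64}{315}.
\]

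\textbf{Main obstacle.} The delicate step is the uniform Hessian control after rescaling. The rescaled bound must be uniform on compact $z$-sets, which requires the gap exponent in Lemma~\ref{lem:regularity} to be exactly $\kappa=\sqrt2$, matching the rescaling factor $e^{\kappa s}$. Fortunately this is precisely what Lemma~\ref{lem:regularity} provides.
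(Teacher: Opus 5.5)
Your proof is correct, but it upgrades the convergence of Theorem~\ref{thm:profile} to first derivatives by a different mechanism than the paper.

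The paper uses only convexity. Since $V_s(z)=e^{\kappa s}(u_n(s,z)-s)$ (your $W_s$) is convex and differentiable in $z$, the quantity $e^{\kappa s}\partial_{z_i}u_n(s,z)$ is squeezed between the one-sided difference quotients $\bigl(V_s(z)-V_s(z-te_i)\bigr)/t$ and $\bigl(V_s(z+te_i)-V_s(z)\bigr)/t$. For fixed $t$ these converge uniformly on $K$ to the corresponding quotients of $h_{n-1}$, which are within $\eps$ of $\partial_{z_i}h_{n-1}$ once $t$ is small.

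You instead use the decay rate $e^{-\sqrt2(x_{(1)}-x_{(2)})}$ in Lemma~\ref{lem:regularity}, which exactly cancels the factor $e^{\kappa s}$. This bounds the Lipschitz constant of $\nabla_zW_s$ on $K$ uniformly in $s$, and you then interpolate. Your route does not use convexity and would work for any family with uniform $C^{1,1}$ bounds. However, it relies on the sharp rate in Lemma~\ref{lem:regularity}, and hence on the global supersolution of Proposition~\ref{prop:barrier}. The paper's route needs nothing about the Hessian of $u_n$, only convexity and the $C^1$ regularity of the limit.

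The remaining steps coincide with the paper's: positivity of $\partial_{z_i}h_{n-1}$ (your expectation formula is exactly \eqref{eq:first}), the identity $\sum_i\partial_{z_i}h_{n-1}=\kappa h_{n-1}$, and the tied case.

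Two small points need correcting.

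First, $h_{n-1}$ is not globally smooth. Formula \eqref{eq:sum} is its expression on a single ordered chamber only. Across a tie of the top two coordinates, $h_{n-1}$ is $C^2$ but in general not $C^3$; already for $m=2$, the third derivatives of $h_2$ on the two sides of $z_1=z_2$ differ. Your interpolation only needs $\nabla h_{n-1}$ to be locally Lipschitz, so cite $h_{n-1}\in C^2(\R^{n-1})$ from Lemma~\ref{lem:profile} instead of smoothness.

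Second, the Hessian bound of Lemma~\ref{lem:regularity} holds almost everywhere in $\R^n$, not on each slice $\{s\}\times\R^{n-1}$. Read it as a Lipschitz bound for $\nabla u_n$ on a convex neighborhood of $\{s\}\times K$, and then restrict to the slice. Also prefer your interpolation version over Arzel\`a--Ascoli, since the interpolation inequality also supplies the uniform bound on $\nabla_zW_s$ that Arzel\`a--Ascoli would need.
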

Kobzar, Kohn and Wang~\cite{kobzar2020newprediction,kobzar2020newgeometric} construct explicit player strategies with regret bounds for every $n$; Corollary~\ref{cor:player} identifies the optimal strategy itself when one expert leads.

We now turn to globally optimal directions. Two reductions limit the candidates. The first is prefix inheritance: if $\v\in\{0,1\}^n$ satisfies \eqref{eq:fixed}, then for every $j<n$ its first $j$ bits satisfy \eqref{eq:fixed} with $n$ replaced by $j$ (Lemma~\ref{lem:prefix}). Indeed, when the bottom $n-j$ experts trail far behind, $u_n$ is close to $u_j$ evaluated at the top $j$ experts (Lemma~\ref{lem:removal}), and \eqref{eq:fixed} passes to the limit. The second reduction is our first main result.
\begin{theorem}[Uniqueness for five experts]\label{thm:five}
For $n=5$, the only direction $\v$ with first bit one that satisfies \eqref{eq:fixed} is $\v=10100$. In particular, up to complementation, no rank direction other than $10100$ is globally optimal for five experts.
\end{theorem}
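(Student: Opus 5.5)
The proof reduces the candidates by prefix inheritance and then excludes the survivors one at a time using explicit low-dimensional solutions. Let $\v=1v_2v_3v_4v_5$ satisfy \eqref{eq:fixed} for $n=5$. By prefix inheritance (Lemma~\ref{lem:prefix}), the prefixes $1v_2v_3$ and $1v_2v_3v_4$ satisfy \eqref{eq:fixed} for $n=3$ and $n=4$. Here the solutions $u_3,u_4$ are explicit and $C^2$ \cite{bayraktar2020asymptotic,drenska2020prediction}, so we can test the linear equation pointwise on the strictly ordered sector.

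\textbf{Three experts.} For $n=3$, I would show that only $101$ survives. The direction $110$ is the complement of $001$, which advances only the trailing expert. Prefix inheritance at $j=2$ leaves no constraint, since $\v$ and $\one-\v$ give the same form. So I would compare $D_{110}^2u_3=D_{001}^2u_3=\partial_{33}u_3$ against $D_{101}^2u_3$ directly, using the explicit $u_3$. Because COMB attains the maximum, $110$ satisfies \eqref{eq:fixed} only if $D^2_{110}u_3=D^2_{101}u_3$ throughout the open sector. A single point with strict inequality rules it out. The direction $100$ is handled the same way. One convenient place to look is a limit regime: let the leader run away and use Theorem~\ref{thm:profile} with the profile $h_2$, together with Corollary~\ref{cor:player}, to compute the leading-order second derivatives. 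Then $D^2_{\v}u_3-D^2_{101}u_3$ has a nonzero leading coefficient $e^{-\kappa s}(\cdots)$ in terms of $h_2(z)$. I expect $\partial_{z_1z_2}h_2\neq0$ to separate the cases.

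\textbf{Four experts.} Given the prefix $101$, the candidates for $n=4$ are $1010$ and $1011$. COMB ($1010$) is globally optimal for $n=4$, so $1011$ must be excluded. I would use the one-leader regime again. As $s\to\infty$, the Hessian of $u_4$ in $z$ is approximately $e^{-\kappa s}D^2h_3(z)$, and the mixed $s$-derivatives come from differentiating $e^{-\kappa s}h_3$. Substituting into $D^2_{\v}u_4$ for $\v=(1,w)$ gives, at leading order,
\[
e^{-\kappa s}\bigl(\kappa^2h_3-2\kappa\, w\cdot\nabla h_3+w^TD^2h_3w\bigr).
\]
One checks where the quadratic in $w\in\{0,1\}^3$ is maximized. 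I expect that any strict deficit of $w=011$ relative to $w=010$ at some $z$ kills $1011$. Since $h_m$ is an elementary finite sum of exponentials (Lemma~\ref{lem:h_m}), this reduces to checking an explicit inequality at one point, say $z=(0,0,-t)$ with $t$ large, or $z=0$.

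\textbf{Five experts.} With prefix $1010$, only $10100$ and $10101$ remain. The direction $10100$ works by \cite{calder2026explicit}. To exclude $10101$, I would use the explicit $u_5$ of \cite{calder2026explicit}. Since $10100$ attains the maximum everywhere, \eqref{eq:fixed} for $10101$ would force $D^2_{10101}u_5=D^2_{10100}u_5$ on the whole open sector. The difference is
\[
2\,\partial_5(\partial_1+\partial_3)u_5+\partial_{55}u_5,
\]
and it suffices to find a single point where this is nonzero. Again the simplest route is the one-leader limit with $h_4$: the leading coefficient is $-2\kappa\,\partial_{z_4}h_4+2\partial_{z_2z_4}h_4+\partial_{z_4z_4}h_4$, which I would show is negative at some $z$. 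I expect this to hold, for instance, with $z_4$ far below so that the asymptotics of the maximum of exponentials are explicit. The viscosity subtlety disappears here because $u_5\in C^2$.

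\textbf{Main obstacle.} The hardest part is that the one-leader leading term may be degenerate, with coefficients that vanish identically in $z$. In particular, $h_m$ treats all trailing experts symmetrically through one max, so the distinction between $w$ vectors can be subtle. If the leading order does not separate the directions, I would instead evaluate the explicit $u_5$ (or $u_3,u_4$) at a concrete interior point near a tie, for example $x_4=x_5$, where the second derivatives of the exponential formula differ visibly. I would also rely on the formulas of \cite{calder2026explicit} for the inequality check, possibly computer assisted.
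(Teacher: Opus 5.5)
There is a genuine gap at the first step. Your claim that only $101$ survives for three experts is false. The explicit solution is $u_3=x_1+\frac1{2\kappa}e^{\kappa(x_2-x_1)}+\frac1{6\kappa}e^{\kappa(2x_3-x_2-x_1)}$, and from it $D_{100}^2u_3=D_{101}^2u_3=\frac\kappa2e^{\kappa(x_2-x_1)}+\frac\kappa6e^{\kappa(2x_3-x_2-x_1)}$ identically on the sector. So $100$ satisfies \eqref{eq:fixed} for $n=3$; this is the equality case $J=\varnothing$ of \eqref{eq:barrierresidual}, with $\bar u_3=u_3$. It therefore cannot be \emph{handled the same way} as $110$. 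Consequently your chain $101\to1010\to\{10100,10101\}$ is not exhaustive: the prefix $100$ leaves $10000$, $10001$, $10010$ and $10011$ untouched. Also, prefix inheritance at $j=2$ is not vacuous. The direction $11$ fails because $D_{11}^2u_2=0$, which already excludes $110$ and every direction beginning with $11$.

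The omitted two-one directions $10010$ and $10001$ are exactly where your tools give no information. Their selected follower is a single expert that is not the unique maximum, so by Lemma~\ref{lem:profile}(iii) its deficit vanishes, just as for $J=\{2\}$ in $10100$. If you run the one-leader limit rigorously, as in the proof of Theorem~\ref{thm:exclude}, it only yields that $h_4$ is affine in $e^{2\kappa z_3}$ or $e^{2\kappa z_4}$. That is true by Remark~\ref{rem:affine}, so no contradiction arises. At the origin these directions also tie with $10100$, since both have two ones. So the degeneracy you list as the main obstacle really occurs, and the fallback of evaluating $u_5$ at some point, possibly numerically, is not yet an argument.

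The idea you are missing is symmetry. Complementation gives $D_{\v}^2u_5=D_{\one-\v}^2u_5$, and at a point fixed by a transposition $\sigma$ one has $D_{\v}^2u_5=D_{P_\sigma\v}^2u_5$. Since $u_5\in C^2$, \eqref{eq:fixed} extends to the closed sector. These operations then turn $10010$, $10011$, $10110$ and $10001$ into COMB attaining the maximum at points of $\{x_1=x_2\}$, $\{x_3=x_4\}$, $\{x_4=x_5\}$ and $\{x_1=x_2\}\cap\{x_4=x_5\}$, respectively, lying off $\{x_1=x_2\}\cap\{x_3=x_4\}$. This contradicts \cite[Theorem~2.2]{calder2026explicit}, which also disposes of $10101$ directly. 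Two further ingredients finish the proof. The origin count $D_{\v}^2u_5(0)=\lambda_5(k-k^2/5)$ forces two or three ones and removes $10000$. The $j=2$ prefix forces the second bit to be zero. Together these leave only $10100$.

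A smaller point: Theorem~\ref{thm:profile} and Corollary~\ref{cor:player} control only $u_n$ and $\nabla u_n$, not the Hessian. Your leading-order formula for $D_{\v}^2u_4$ is therefore unjustified as written. For $|J|\geq2$ you should instead pass the segment representation of Lemma~\ref{lem:lines}(iv) to the limit, as in the proof of Theorem~\ref{thm:exclude}.
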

Calder and Drenska~\cite[Theorem~2.1]{calder2026explicit} showed that $10100$ is globally optimal, so Theorem~\ref{thm:five} completes the proof of conjecture (ii) of~\cite{calder2026numerical}. The proof, in Section~\ref{sec:inheritance}, combines prefix inheritance with the results of~\cite{calder2026explicit}. Together, prefix inheritance and Theorem~\ref{thm:five} show that for every $n\geq5$ a direction satisfying \eqref{eq:fixed} begins with $10100$. Our next result uses Theorem~\ref{thm:profile} to show that any direction satisfying \eqref{eq:fixed} selects at most one expert besides the leader, which leaves a single candidate.
\begin{theorem}[At most two ones]\label{thm:exclude}
A direction $\v$ with first bit one that satisfies \eqref{eq:fixed} has at most two ones. Consequently, for every $n\geq5$, the only direction with first bit one that can satisfy \eqref{eq:fixed} is
\[
 (1,0,1,0,\ldots,0).
\]
\end{theorem}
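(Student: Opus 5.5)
We use Theorem~\ref{thm:profile} to pass \eqref{eq:fixed} to the limit $s\to\infty$ and obtain a linear equation for the profile $h_{n-1}$. Then we show that $h_{n-1}$ can satisfy that equation only if the tail $\v'=(v_2,\dots,v_n)$ of $\v$ has at most one nonzero entry. Throughout, write $x=(s,z)$ with $v_1=1$.

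\textbf{Step 1: the limiting equation.} Set $w(s,z)=e^{\kappa s}(u_n(s,z)-s)$. Then $u_n=s+e^{-\kappa s}w$, and $D_\v=\partial_s+D_{\v'}$, where $D_{\v'}$ acts in $z$. Substituting into \eqref{eq:fixed} on the sector, the terms linear in $s$ cancel. Multiplying by $e^{\kappa s}$, and using $\kappa^2=2$, we get
\[
w-\tfrac12\bigl(\partial_s^2 w-2\kappa\partial_s w+\kappa^2 w+2(\partial_s-\kappa)D_{\v'}w+D_{\v'}^2w\bigr)=0,
\]
interpreted in the viscosity sense. The zeroth-order terms $w-\tfrac12\kappa^2w$ cancel. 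We then consider the translates $w_t(s,z)=w(s+t,z)$. By Theorem~\ref{thm:profile}, $w_t\to h_{n-1}(z)$ locally uniformly as $t\to\infty$, with the limit independent of $s$, and the sector condition $z_1>\dots>z_{n-1}$ persists. Stability of viscosity solutions then shows that $h:=h_{n-1}$, viewed as a function of $(s,z)$ that does not depend on $s$, solves the limiting equation. Since $\partial_s h=0$, this reduces to
\[
\kappa D_{\v'}h-\tfrac12D_{\v'}^2h=0\quad\text{on }\{z_1>\dots>z_{n-1}\}.
\]
Because $h$ is smooth, this holds classically. The $C^{1,1}$ bound of Lemma~\ref{lem:regularity} and the monotonicity in Theorem~\ref{thm:profile} can supply the compactness if needed.

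\textbf{Step 2: the identity fails when $\v'$ has two or more ones.} Along a line $z+t\v'$, let $g(t)=h(z+t\v')$. The equation says $g''=2\kappa g'$, so $g(t)=a+be^{2\kappa t}$ locally. Now compute $g$ from \eqref{eq:profile}. Write $M=\max_{i\in S}(z_i+E_i)$, where $S$ is the support of $\v'$, and $N=\max_{i\notin S}(z_i+E_i)$. Then
\[
4\kappa\,g(t)=\E\exp\bigl(\kappa\max(M+t,N)\bigr).
\]
As $t\to\infty$, this behaves like $e^{\kappa t}\E e^{\kappa M}$. For this to be compatible with $a+be^{2\kappa t}$, the only option is $b=0$ with the growth coming from elsewhere. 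That is impossible when $\E e^{\kappa M}>0$. So I would instead work with the explicit finite sum of exponentials from Lemma~\ref{lem:h_m}, which is valid on the sector segment. On that segment, the exponential rates in $t$ appearing in $g$ are determined by $|S|$. The exponential rate of $\max$ over $k=|S|$ shifted variables contributes terms $e^{\kappa t}$ and, through the exponential tails of rate $2\kappa$, terms like $e^{(\kappa-2\kappa j)\,\cdot}$ in the gaps. Matching coefficients against $a+be^{2\kappa t}$ should force $k\le1$. For $k=1$ the identity can hold, since moving a single trailing expert gives $g'' = 2\kappa g'$ exactly in the region where that expert is the max. Concretely, I would differentiate under the expectation: $D_{\v'}h=\tfrac14\E[e^{\kappa\max}\one_{\text{argmax}\in S}]$. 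I would then compute $D_{\v'}^2$, which picks up boundary terms from argmax switches between two indices in $S$. Each switch inside $S$ contributes a strictly negative density term, so the equation fails whenever $|S|\ge2$.

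\textbf{Step 3: consequences.} The \emph{consequently} part follows by combining three facts. For $n\ge5$, prefix inheritance (Lemma~\ref{lem:prefix}) together with Theorem~\ref{thm:five} forces the first five bits to be $10100$. That prefix already has two ones, so every remaining bit must be zero.

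\textbf{Main obstacle.} The hard part is Step 2: showing rigorously that $\kappa D_{\v'}h\neq\tfrac12D_{\v'}^2h$ somewhere in the sector when $|S|\ge2$. I would first try the case of two tied members of $S$ at a point. There $D_{\v'}^2h$ should differ from $2\kappa D_{\v'}h$ by an explicit term, namely a density of $E_i-E_j$ at the gap, which is nonzero. Plugging in the explicit formula from Lemma~\ref{lem:h_m} gives a fallback check.
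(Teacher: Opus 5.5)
Your Steps~1 and~3 are sound and follow the paper's outline. The paper reaches the same identity $D_J^2h_{n-1}-2\kappa D_Jh_{n-1}=0$ on $\{z_1>\cdots>z_{n-1}\}$, with $J$ the support of $\v'$, by a more elementary route: by Lemma~\ref{lem:lines}(iv) the rescaled correction along a sector segment in the direction $\v$ is affine in $e^{2\kappa r}$, and this survives the pointwise limit. Your viscosity-stability argument for $w$ is an acceptable substitute.

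The genuine gap is Step~2, which is the real content of the theorem once the limit is taken. You must show $D_J^2h_{n-1}-2\kappa D_Jh_{n-1}\neq0$ somewhere on the ordered chamber when $|J|\geq2$, and the proposal offers only heuristics. The large-$t$ asymptotics do not apply, because the identity holds only on the chamber and the line $z+t\one_J$ generally leaves it. The coefficient matching is asserted (``should force $k\le1$'') rather than carried out. The argmax-switch mechanism is wrong as described. Along $t\mapsto z+t\one_J$ all coordinates in $J$ move together, so the argmax inside $J$ never changes and no within-$J$ switching term appears in $g''$. The kinks of $t\mapsto\max(M+t,N)$ come from switches between $J$ and its complement, and by convexity they contribute \emph{positive} mass. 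Any negativity must therefore come from the smooth part outweighing these terms, and that depends on the rate $2\kappa$ of the $E_i$. Your $k=1$ remark also has the region reversed: $g''=2\kappa g'$ for a single follower holds where it is \emph{not} the top follower, and fails strictly where it is the unique top follower.

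The missing ingredient is exactly Lemma~\ref{lem:profile}(ii)--(iii), i.e.\ \eqref{eq:deficit} with its strictness clause. You could cite it, or prove it in your probabilistic language as follows.
\begin{itemize}
\item Conditioning on $N_i=\max_{j\neq i}(z_j+E_j)$ and using memorylessness gives $\partial_ih=\tfrac12e^{2\kappa z_i}\E e^{-\kappa N_i}$ whenever $z_j\geq z_i$ for some $j\neq i$.
\item Hence $\partial_{ii}h=2\kappa\partial_ih$ there, and in general $\partial_{ii}h\leq2\kappa\partial_ih$.
\item $\partial_{ij}h<0$ for $i\neq j$, because $e^{-\kappa N_i}$ strictly decreases in $z_j$ on the event that $j$ attains $N_i$. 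On the chamber, apply this with $i$ the member of the pair that is not the top follower.
\end{itemize}
Then $D_J^2h-2\kappa D_Jh=\sum_{i\in J}(\partial_{ii}h-2\kappa\partial_ih)+\sum_{i\neq j\in J}\partial_{ij}h<0$ for $|J|\geq2$. The within-$J$ switches enter only through this coordinatewise expansion, as the negative cross terms.

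Alternatively, your coefficient-matching idea can be completed. On the ordered chamber, $D_J^2-2\kappa D_J$ multiplies each exponential $e^{\lambda\cdot z}$ in \eqref{eq:sum} by $c(c-2\kappa)$, where $c=\lambda\cdot\one_J$. If $1\notin J$, the term indexed by $J$ itself acquires the factor $4\kappa^2|J|(|J|-1)\neq0$. If $1\in J$, the term $e^{\kappa z_1}$ acquires $-\kappa^2$. The exponent vectors in \eqref{eq:sum} are distinct, so linear independence of exponentials on an open set shows the result is not identically zero.
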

For six experts, this leaves only $101000$, which needs a separate result to exclude.

\begin{theorem}[Nonexistence for six or more experts]\label{thm:nonexistence}
For every $n\geq6$, no binary direction $\v$ satisfies \eqref{eq:fixed} throughout the strictly ordered sector. In particular, the geometric prediction problem has no globally optimal adversarial direction that is constant in rank order.
\end{theorem}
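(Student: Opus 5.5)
By Theorem~\ref{thm:exclude}, it suffices to rule out the single candidate $\v=(1,0,1,0,\ldots,0)\in\{0,1\}^n$ for $n\geq6$. We argue by contradiction: assume $u_n$ satisfies \eqref{eq:fixed} with this $\v$ on the strictly ordered sector. Then $D_\v^2u_n=(\partial_{x_1}+\partial_{x_3})^2u_n$. The plan, following the outline in the introduction, is to restrict attention to the face where the bottom $n-3$ experts are tied, $x=(a,b,c,t,\ldots,t)$. By translation equivariance we may take $t=0$, and by permutation symmetry $u_n$ is symmetric in the last $n-3$ coordinates. We then derive trace equations there in the spirit of~\cite[\S3.2]{calder2026explicit}. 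Since $u_n\in C^{1,1}$ (Lemma~\ref{lem:regularity}), the linear equation \eqref{eq:fixed} holds almost everywhere with $u_n-\phi_n\ge0$. Because the direction $\v$ only differentiates in $x_1,x_3$, the restriction $w(a,b,c)=u_n(a,b,c,0,\ldots,0)$ satisfies, at least on $\{a>b>c>0\}$, the three-variable linear equation $w-\frac12(\partial_a+\partial_c)^2w=a$. This is exactly the reduced equation for which~\cite[Lemma~3.3]{calder2026explicit} supplies a propagator formula, and I would verify that formula directly.

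The key steps, in order, are as follows. (1) Solve the linear equation along characteristic segments in the $(1,0,1)$ direction (Lemma~\ref{lem:lines}-type explicit form) and use the propagator to express $w$ in terms of its boundary values on $\{c=0\}$ and $\{a=b\}$. On $\{c=0\}$ the bottom $n-2$ coordinates are tied, and there $u_n$ is again controlled. (2) Use the one-leader asymptotics of Theorem~\ref{thm:profile} and Corollary~\ref{cor:player} as boundary data at $a\to\infty$. This gives $w(a,b,c)-a\sim e^{-\kappa a}h_{n-1}(b,c,0,\ldots,0)$, with $h_{n-1}$ computed via Lemma~\ref{lem:h_m}; the result depends on $n$ through the multiplicity $n-3$ of the zero entries. (3) Evaluate the resulting representation along the ray $(s,s,s,0,\ldots,0)$. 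Global optimality then forces a lower bound on the second-order growth of $g(s)=u_n(s,s,s,0,\ldots,0)$, coming from integrating the propagator against the $h_{n-1}$ data.

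(4) Compare this bound with what the nonlinear PDE \eqref{eq:pde} permits on that ray. Along the ray the leading group has three tied experts, so $u_n(s,s,s,0,\ldots)-s$ decays like $e^{-\kappa' s}$ times an explicit profile. The profile for three tied leaders and $n-3$ trailers is obtained by the same barrier method used for Theorem~\ref{thm:profile}, with $u_3$ replacing the leader. Alternatively, one uses the upper bound $D^2u_n\preceq 8(u_n-\phi_n)I$ together with the supersolution inequalities of Section~\ref{sec:basic}. Each of these gives a rate or constant that is incompatible with step (3) exactly when $n-3\geq3$. The counting is as follows: the weight $h_{n-1}(0)$ grows like $(2m)!!/(2m-1)!!\sim\sqrt{\pi m}$, while the constraint from \eqref{eq:pde} does not grow. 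At $n=5$ the two are consistent, as they must be by~\cite{calder2026explicit}, and the inequality first fails at $n=6$. I would check the $n=6$ case with explicit constants and then show monotonicity in $n$.

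The main obstacle is step (3)–(4): extracting a sharp, $n$-dependent quantitative inequality from the trace equations, as opposed to just a sign condition. The difficulty is that the linear equation holds only on the open sector while the data enter through tied faces where $u_n$ is merely $C^{1,1}$. I would first attempt it by working with one-dimensional ODEs in $s$ obtained by integrating \eqref{eq:fixed} along the segments where $\v$ is constant. The $C^{1,1}$ bound justifies the boundary traces, and the comparison would then reduce to explicit exponential sums that can be checked symbolically.
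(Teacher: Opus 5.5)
Your reduction to the single candidate $A=(1,0,1,0,\ldots,0)$ via Theorem~\ref{thm:exclude} matches the paper, as do your choice of the slice $x_4=\cdots=x_n$ and of the ray $(s,s,s,0,\ldots,0)$. The gap is that your mechanism for the contradiction works in the wrong asymptotic regime, and the inequality that is supposed to fail at $n=6$ is never identified.

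Boundary data from the one-leader profile carry no obstruction for this candidate. The equation for $A$ selects the single non-leading follower $J=\{2\}$. By Lemma~\ref{lem:profile}(iii), $2\kappa D_Jh_{n-1}-D_J^2h_{n-1}$ vanishes identically on the ordered chamber in that case, which is exactly why Theorem~\ref{thm:exclude} could not remove $A$. Along the ray at large $s$ the correction also does not decay. By Lemma~\ref{lem:removal}, $u_n(s,s,s,0,\ldots,0)-s\to u_3(0)=\sqrt2/3$, and the three-leader limit is governed by $u_3$, for which the prefix $101$ is optimal, so nothing is violated there either.

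The heuristic that $h_{n-1}(0)$ grows like $\sqrt{n}$ while ``the constraint does not'' yields no inequality with threshold $n=6$. Nor does $D^2u_n\preceq8(u_n-\phi_n)I$: for $C=(1,1,1,0,\ldots,0)$ it only gives $D_C^2u_n\leq24(u_n-\phi_n)$, which is far too weak.

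The missing idea is that the contradiction is local at the origin. It concerns the $s^2$ Taylor coefficient of $f_3(s)=u_n(s,s,s,0,\ldots,0)$ as $s\downarrow0$, and the paper obtains it in four steps.
\begin{enumerate}
\item The $n$-dependence enters only through the face condition that $C^1$ symmetry imposes on the bottom-tie face: $F_3=\alpha F_2$, with $\alpha=q/(q+1)$ and $q=n-3$. Combined with the other two face conditions at the corner, this is a linear system that fixes $\nabla F(0)$. Hence $e'(0)=-(q+1)/(q+3)$ for the two-leader trace $e(X)=F(0,X,0)$.
\item A scalar ODE for $e$ follows from a single characteristic; its singular solution is excluded by the Lipschitz bound. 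This gives $e'(X)=e'(0)+e(0)X+O(X^2)$ with $e(0)=\kappa u_n(0)$.
\item An explicit transport kernel with first moment $3Y/2$ carries this expansion to the three-leader ray. It forces $f_3(s)=u_n(0)+\tfrac{3}{n}s+\tfrac{3q}{2(q+1)}u_n(0)s^2+o(s^2)$.
\item The second-difference inequality \eqref{eq:seconddiff} in the direction $C$ caps that coefficient at $u_n(0)$. Since $\tfrac{3q}{2(q+1)}>1$ exactly when $q\geq3$, this is a contradiction for $n\geq6$.
\end{enumerate}
None of these steps appears in your plan, so as written it does not reach a contradiction.
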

By Theorem~\ref{thm:exclude}, it remains to exclude $1010\cdots0$, which Section~\ref{sec:last} does. The argument uses the established convexity and $C^{1,1}$ bound of the true Bellman value $u_n$ under the hypothesis of global optimality; it makes no regularity assertion about arbitrary fixed-policy values.

\begin{remark}[Randomized rank strategies]\label{rem:mixture}
In the game of Drenska and Kohn, the adversary chooses in each round an arbitrary probability distribution on the outcomes $\{0,1\}^n$~\cite[Section~2.1]{drenska2020prediction}. In particular, it may randomize over rank directions: in every round it draws a direction $\v$ with a fixed probability $p_{\v}$, reads it in the current rank order, and advances either the experts selected by $\v$ or those selected by $\one-\v$, each with probability one half. Drenska and Kohn observed that randomizing does not increase the Hamiltonian, since a weighted average of $D_{\v}^2u_n$ never exceeds the largest of them~\cite[Section~3.3]{drenska2020prediction}. The equality case of this observation shows that, for $n\geq6$, Theorem~\ref{thm:nonexistence} also rules out these strategies. Indeed, the linear equation of such a mixture replaces $D_{\v}^2$ in \eqref{eq:fixed} by $\sum_{\v}p_{\v}D_{\v}^2$. Where the Hessian of $u_n$ exists, the residual of the mixture is the weighted average $\sum_{\v}p_{\v}R_{\v}$ of the residuals $R_{\v}=u_n-\tfrac12D_{\v}^2u_n-\phi_n$, which are nonnegative by \eqref{eq:linearineq}, so it vanishes only if $R_{\v}=0$ for every $\v$ with $p_{\v}>0$. Since $u_n\in C^{1,1}$ by Lemma~\ref{lem:regularity}, its Hessian exists almost everywhere, and viscosity and distributional solutions of these linear equations coincide~\cite{ishii1995equivalence}. Hence every direction in the support of a mixture whose equation holds throughout the sector satisfies \eqref{eq:fixed}. Like Theorem~\ref{thm:nonexistence}, this concerns strategies that are constant in rank order and says nothing about state-dependent mixtures.
\end{remark}

We also note that the barrier functions used in our proofs lead to explicit bounds on the value function, some of which are sharper than existing results. 
\begin{remark}[Explicit bounds on $u_n$]\label{rem:origin}
Comparison with the global supersolution of Proposition~\ref{prop:barrier} gives
\[
 u_n(x)-\phi_n(x)\leq h_{n-1}\bigl(x_{(2)}-x_{(1)},\dots,x_{(n)}-x_{(1)}\bigr)\qquad\text{for all }x\in\R^n,
\]
with equality for $n=2,3$, where that supersolution is $u_n$ itself. By Theorem~\ref{thm:profile} the bound is asymptotically sharp when one expert leads: with $x=(s,z)$, both sides are $\sim e^{-\kappa s}h_{n-1}(z)$ as $s\to\infty$. At the origin it gives $u_n(0)\leq h_{n-1}(0)$, but $h_{n-1}(0)$ describes the tail of $u_n$ rather than its value when all experts are tied, and we now turn to sharper bounds on $u_n(0)$.

The value $u_n(0)$ is known explicitly only for $n\leq5$, where it equals $\sqrt2/4$, $\sqrt2/3$, $\pi/(4\sqrt2)$ and $45\pi^2/(512\sqrt2)$ for $n=2,3,4,5$~\cite{calder2026explicit}. For larger $n$, the previously known bounds are those of Kobzar, Kohn and Wang~\cite{kobzar2020newgeometric}, who obtain bounds at every state from explicit sub- and supersolutions; in our normalization their limiting value is $2u_n(x/2)$. At the origin, their heat-based adversary and their exponential-weights, heat-based and max-potential players give
\[
 \frac{\sqrt\pi}4\sqrt{2c_n}\,\E\max_{1\leq i\leq n}Z_i\;\leq\;u_n(0)\;\leq\;\min\Bigl\{\sqrt{\frac{\log n}2},\ \sqrt{\frac\pi8}\,\E\max_{1\leq i\leq n}Z_i,\ M_n\Bigr\},
\]
where $Z_1,\dots,Z_n$ are independent standard normal random variables, $2c_n=n/(n-1)$ and $M_n=\sqrt{(n-1)/8}$ for even $n$, and $2c_n=(n+1)/n$ and $M_n=\tfrac{n-1}n\sqrt{(n+1)/8}$ for odd $n$. The lower bound exceeds $u_5(0)$ for every $n\geq6$, and they showed that it matches the exponential-weights upper bound up to the factor $\sqrt\pi/2$.

Our results give two new upper bounds at the origin (Lemma~\ref{lem:origin} in Appendix~\ref{app:origin}). First, convexity sharpens the barrier: for $n\geq4$,
\[
 u_n(0)\leq B_n:=\frac{n-1}{n\kappa}\Bigl(1+\log\frac{n\kappa h_{n-1}(0)}{n-1}\Bigr)<h_{n-1}(0),
 \qquad h_{n-1}(0)=\frac{(2n-2)!!}{4\kappa(2n-3)!!}.
\]
Second, comparison with the finite-horizon problem (Remark~\ref{rem:finitegeometric}) improves the exponential-weights bound by the factor $\sqrt\pi/2$, which closes the gap above:
\[
 u_n(0)\leq\sqrt{\frac{\pi\log n}{8}}\qquad\text{and}\qquad u_n(0)\sim\sqrt{\frac{\pi\log n}{8}}\quad\text{as }n\to\infty.
\]
Each improves on all three previous upper bounds over a range of $n$: $B_n$ for $6\leq n\leq23$ and $\sqrt{\pi\log n/8}$ for every $n\geq9$ (Appendix~\ref{app:origin}, where finitely many numerical inequalities are certified by computer). The smaller of the two, and hence the smallest of the upper bounds compared here, is $B_n$ for $6\leq n\leq13$ and $\sqrt{\pi\log n/8}$ for $n\geq14$. Combining the monotonicity $u_n(0)\geq u_{n-1}(0)$ of Lemma~\ref{lem:removal} with the exact values gives $u_n(0)\geq u_5(0)$ for $n\geq6$, which is weaker than the heat-based lower bound. For $n\leq5$ the exact values agree with $h_{n-1}(0)$ when $n=2,3$ and are smaller than every upper bound above when $n=4,5$. %
\end{remark}

Finally, the geometric statement implies its finite-horizon analogue. Write $U_n(\tau,x)$ for the value with time to maturity $\tau$,
\begin{equation}\label{eq:parabolic}
 \partial_\tau U_n=\frac12\max_{\v\in\{0,1\}^n}D_{\v}^2U_n
 \quad\text{on }(0,\infty)\times\R^n,
 \qquad U_n(0,\cdot)=\phi_n .
\end{equation}

\begin{theorem}[Finite-horizon nonexistence]\label{thm:finite}
For every $n\geq6$ there is no binary direction $\v$ with
\begin{equation}\label{eq:parfixed}
 \partial_\tau U_n=\frac12D_{\v}^2U_n
 \quad\text{on }(0,\infty)\times\{x_1>\cdots>x_n\}.
\end{equation}
The same holds with $(0,\infty)$ replaced by any nonempty open time interval, and no rank direction attains the Hamiltonian maximum throughout the spatial sector at even a single positive time to maturity, in the sense of Remark~\ref{rem:selfsimilar}. Directions allowed to depend on the remaining time do not help.
\end{theorem}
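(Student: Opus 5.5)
The plan is to reduce Theorem~\ref{thm:finite} to Theorem~\ref{thm:nonexistence} by a Laplace transform in the time to maturity $\tau$, as announced in the introduction. Suppose, for contradiction, that some $\v$ satisfies \eqref{eq:parfixed} on $(0,\infty)\times\{x_1>\cdots>x_n\}$. The first step is to recall, or derive from the control representations, the link between the two problems. The geometric value is the exponential average $u_n(x)=\int_0^\infty e^{-\tau}U_n(\tau,x)\,d\tau$, up to the normalization of \eqref{eq:pde} against \eqref{eq:parabolic}. The Laplace variable must be chosen so that integrating $\partial_\tau U_n$ against $e^{-\tau}$ reproduces $u_n-\phi_n$. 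Integration by parts then gives
\[
\int_0^\infty e^{-\tau}\partial_\tau U_n\,d\tau=u_n-\phi_n ,
\]
where $\phi_n=x_1$ on the sector. Under \eqref{eq:parfixed} the left side equals $\tfrac12\int_0^\infty e^{-\tau}D_{\v}^2U_n\,d\tau$. If we can pull $D_{\v}^2$ outside the integral, this becomes $\tfrac12 D_{\v}^2u_n$, and $u_n$ satisfies \eqref{eq:fixed}, contradicting Theorem~\ref{thm:nonexistence}.

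The technical core is justifying the interchange, since $U_n$ need not be $C^2$. I would work in the weak or distributional sense. First I would establish a uniform semiconcavity/convexity and $C^{1,1}$-type bound for $U_n(\tau,\cdot)$, analogous to Lemma~\ref{lem:regularity}, obtained for instance from convexity of the payoff and the parabolic scaling $U_n(\tau,x)=\sqrt\tau\,U_n(1,x/\sqrt\tau)$. This gives a Hessian bound of order $\tau^{-1/2}$, which is integrable against $e^{-\tau}$ near $\tau=0$. Next, for a test function $\psi$ supported in the open sector, I would integrate \eqref{eq:parfixed} against $e^{-\tau}\psi$ and use Fubini to obtain $u_n-\tfrac12D_{\v}^2u_n=x_1$ in distributions. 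The equivalence of viscosity and distributional solutions of linear equations~\cite{ishii1995equivalence}, already used in Remark~\ref{rem:mixture}, converts this into \eqref{eq:fixed}. I expect the main obstacle to be exactly here: making the Hessian bounds on $U_n$ rigorous near $\tau=0$, where $\phi_n$ has kinks, and checking that the boundary term at $\tau=0$ and the growth at $\tau=\infty$ vanish. For the growth, the bound $U_n-\phi_n=O(\sqrt\tau)$ from scaling suffices.

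For the refinements, I would exploit self-similarity. By the scaling above, \eqref{eq:parfixed} on any open interval of $\tau$, or even the Hamiltonian maximum holding at a single time in the sense of Remark~\ref{rem:selfsimilar}, propagates to all $\tau>0$. The scaling maps the spatial sector onto itself, and rank directions are scale invariant, so the problem reduces to the case already treated. For time-dependent directions $\v(\tau)$, the single-time statement applies at each fixed $\tau$: no rank direction attains the maximum throughout the sector at that $\tau$, so no choice of $\v(\tau)$ can work.
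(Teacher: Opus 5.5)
\textbf{There is a genuine gap in your first step.} The identity $u_n=\int_0^\infty e^{-\tau}U_n\,d\tau$ cannot be recalled, and it cannot be derived from the control representations. Those give only $u_n\leq\mathcal G_n:=\int_0^\infty e^{-\tau}U_n(\tau,\cdot)\,d\tau$ (Remark~\ref{rem:finitegeometric}): a geometric control must serve every horizon at once, while $U_n(\tau,\cdot)$ optimizes each horizon separately. For $n\geq6$ equality is not known, and asserting it unconditionally amounts to interchanging the supremum over controls with the time integral.

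What your Fubini argument actually proves is that $\mathcal G_n$ satisfies the linear equation $\mathcal G_n-\tfrac12D_{\v}^2\mathcal G_n=x_1$ on the sector. That contradicts nothing by itself. Theorem~\ref{thm:nonexistence} is a statement about the Bellman value $u_n$, and its proof uses the full Bellman structure: comparison with the barrier $\bar u_n$, the removal estimates, prefix inheritance down to $u_5$, and the supersolution inequality in the direction $(1,1,1,0,\dots,0)$ on the three-leader ray. You have established none of these for $\mathcal G_n$.

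\textbf{The missing step} is to prove $\mathcal G_n=u_n$ under the contradiction hypothesis, which is Lemma~\ref{lem:transferparabolic}. You must show that $\mathcal G_n$ is a viscosity solution of \eqref{eq:pde} and then apply Lemma~\ref{lem:comparison}, using the linear growth from \eqref{eq:gbound}.

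The supersolution half needs an ingredient absent from your plan: the dynamic programming inequality $U_n(\tau+h,x)\geq\E\,U_n(\tau,x+\v'W_h)$ for every constant control $\v'$. This gives $\partial_\tau U_n-\tfrac12D_{\v'}^2U_n\geq0$ in $\mathcal D'$ for \emph{every} $\v'$ (Lemma~\ref{lem:parineq}), and hence $\mathcal G_n-\tfrac12D_{\v'}^2\mathcal G_n\geq\phi_n$ after the transform. Averaging over $\v'$ as in Proposition~\ref{prop:hessian} then gives $\mathcal G_n\in C^{1,1}$. This, not convexity plus scaling, is also what would give the Hessian bounds you wanted for $U_n$. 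Those bounds are unnecessary, though: the growth bound $0\leq U_n-\phi_n\leq n\sqrt\tau$ already justifies the distributional transfer and the boundary term at $\tau=0$.

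The subsolution half is immediate on the open sectors, from the linear equation and permutation invariance. However, \eqref{eq:parfixed} says nothing on the collision set. The paper handles collisions by perturbing a touching test function by $\delta|x-x_0|^2-p\cdot x$ and using that $\nabla\mathcal G_n$ is Lipschitz. The maximizers over $|p|<\delta r$ then have gradient image of positive measure, so they cannot all lie in the null collision set.

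With $\mathcal G_n=u_n$ in hand, your contradiction with Theorem~\ref{thm:nonexistence} goes through. Your self-similarity treatment of the refinements is in line with Remark~\ref{rem:selfsimilar}.
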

Section~\ref{sec:finite} proves Theorem~\ref{thm:finite} from Theorem~\ref{thm:nonexistence} by a Laplace transform in $\tau$. The transfer is applied to the linear equations attached to a hypothetical common optimizer, never to the nonlinear Hamiltonian, so no claim is made that a Laplace transform commutes with the Hamiltonian.

\begin{remark}[Transfer from finite to geometric horizon]\label{rem:transfer}
The transfer behind Theorem~\ref{thm:finite} holds for every $n$ (Lemma~\ref{lem:transferparabolic}): a rank direction that attains the maximum in \eqref{eq:parabolic} throughout the sector at every time to maturity also attains the maximum in \eqref{eq:pde} throughout the sector. The result holds without a classical $C^{1,2}$ regularity assumption on $U_n$. Bayraktar, Ekren and Zhang~\cite[Proposition~3.1]{bayraktar2020finite} proved this transfer for COMB under the additional assumptions that $U_n$ is $C^{1,2}$ and that its spatial Hessian is integrable against $e^{-\tau}\,d\tau$ (see Remark~\ref{rem:bez}). Kobzar, Kohn and Wang~\cite{kobzar2020newgeometric} use the same transform in time to pass explicit bounds from the finite horizon to the geometric problem. Combined with Theorem~\ref{thm:five} and~\cite{calder2026finite}, the transfer shows that, up to complementation, $10100$ is the only rank direction that is optimal throughout the sector at every time to maturity for five experts. By self-similarity, the set of rank directions optimal throughout the sector does not depend on the time to maturity (Remark~\ref{rem:selfsimilar}).
\end{remark}

\begin{remark}[Finite versus geometric horizon]\label{rem:finitegeometric}
For every $n$ and $x\in\R^n$,
\begin{equation}\label{eq:finitegeometric}
 u_n(x)\leq\int_0^\infty e^{-\tau}U_n(\tau,x)\,d\tau.
\end{equation}
Indeed, $u_n(x)=\sup_\alpha\E\int_0^\infty e^{-t}\phi_n(X^\alpha_t)\,dt$ is the value of a discounted control problem, where $X^\alpha_t=x+\int_0^t\alpha_s\,dW_s$, $W$ is a scalar Brownian motion, and the supremum is over progressively measurable controls $\alpha$ with values in $\{0,1\}^n$. The finite-horizon value is $U_n(t,x)=\sup_\alpha\E\phi_n(X^\alpha_t)$ (see \eqref{eq:parcontrol}), so $\E\phi_n(X^\alpha_t)\leq U_n(t,x)$ for every $t$. Thus a control for the geometric problem must serve every horizon at once, while $U_n(\tau,x)$ optimizes each horizon separately. Lemma~\ref{lem:transferparabolic} gives equality in \eqref{eq:finitegeometric} whenever some rank direction satisfies \eqref{eq:parfixed}, so equality holds for $n\leq5$ by~\cite{bayraktar2020finite} and~\cite{calder2026finite}. By the self-similarity \eqref{eq:parscaling}, $U_n(\tau,0)=\sqrt\tau\,U_n(1,0)$, so at the origin \eqref{eq:finitegeometric} reads
\[
 U_n(1,0)\geq\frac{2}{\sqrt\pi}\,u_n(0),
\]
with equality for $n\leq5$, as Bayraktar, Ekren and Zhang~\cite[Corollary~3.1]{bayraktar2020finite} showed for $n=3,4$. Lower bounds on $u_n(0)$ therefore transfer to $U_n$, but upper bounds do not transfer this way. In the other direction, \eqref{eq:finitegeometric} turns upper bounds on $U_n$ into upper bounds on $u_n$. For each $\eta>0$, the potential $\eta^{-1}\log\sum_ie^{\eta x_i}+\eta\tau/8$ of the exponential-weights player with learning rate $\eta$, who follows expert $i$ with probability proportional to $e^{\eta x_i}$~\cite{kobzar2020newprediction}, is a supersolution of \eqref{eq:parabolic}, because its second derivative in every binary direction is at most $\eta/4$. Comparison with $\eta=\sqrt{8(\log n)/\tau}$ gives $U_n(\tau,0)\leq\sqrt{\tau(\log n)/2}$, and \eqref{eq:finitegeometric} then gives
\[
 u_n(0)\leq\Gamma\Bigl(\frac32\Bigr)\sqrt{\frac{\log n}{2}}=\sqrt{\frac{\pi\log n}{8}},
\]
the bound used in Remark~\ref{rem:origin}. The geometric exponential-weights bound $\sqrt{(\log n)/2}$ optimizes a single fixed learning rate. Here the learning rate is optimized separately for each deterministic horizon $\tau$, which gains the factor $\sqrt\pi/2$. Unlike the bounds of~\cite{kobzar2020newgeometric}, this argument does not by itself specify a player strategy for the geometric problem. Such a strategy is provided, to leading order as $n\to\infty$, by the recent algorithm of Cai et al.~\cite{cai2026prediction}. It needs no knowledge of the horizon, has regret at most $\bigl(1+O(\sqrt{\log\log n/\log n})\bigr)\sqrt{t\log n/2}$ at every time $t$, and therefore attains $\sqrt{\pi\log n/8}$ under geometric stopping, up to a factor $1+o(1)$. Since $U_n(1,0)\leq\sqrt{(\log n)/2}$ and $u_n(0)\sim\sqrt{\pi(\log n)/8}$, the inequality $U_n(1,0)\geq2u_n(0)/\sqrt\pi$ gives $U_n(1,0)\sim\sqrt{(\log n)/2}$, so the ratio $U_n(1,0)/u_n(0)$ tends to $2/\sqrt\pi$ as $n\to\infty$. Whether it equals $2/\sqrt\pi$ for any $n\geq6$ is open.
\end{remark}

The rest of the paper proves the results described in this section. Section~\ref{sec:basic} collects the facts about $u_n$ used throughout: the restriction of \eqref{eq:pde} and \eqref{eq:fixed} to lines, the linear supersolution inequalities, the $C^{1,1}$ bound, the removal estimate, and prefix inheritance. It then combines prefix inheritance with~\cite{calder2026explicit} to prove Theorem~\ref{thm:five}, which forces every candidate direction to begin with $10100$. Section~\ref{sec:profile} proves Theorem~\ref{thm:profile} by matching an explicit global supersolution with a lower bound obtained from a minimality property of the profile, and then deduces Lemma~\ref{lem:regularity}, Corollary~\ref{cor:player} and Theorem~\ref{thm:exclude}. Section~\ref{sec:last} excludes the remaining candidate $1010\cdots0$ through the boundary-trace equations, proving Theorem~\ref{thm:nonexistence}. Section~\ref{sec:finite} transfers the result to the finite horizon, proving Theorem~\ref{thm:finite}.
\section{Preliminaries}\label{sec:basic}

This section assembles the basic tools used in the rest of the paper, ending with the proof of Theorem~\ref{thm:five}. Convexity and the standard symmetries of the prediction value follow either from its discounted control representation (Remark~\ref{rem:finitegeometric}) or from the comparison arguments in~\cite{calder2026numerical}. In particular $u_n$ is convex, permutation invariant, and satisfies $u_n(x+a\one)=u_n(x)+a$, so that $\nabla u_n\cdot\one=1$ and $D^2u_n\one=0$ wherever these derivatives exist. Throughout this paper we make use of the theory of viscosity solutions of PDEs; we refer the reader to \cite{crandall1992user} for more details. 

\subsection{Linear inequalities and \texorpdfstring{$C^{1,1}$}{C1,1} regularity}

The regularity of $u_n$ used throughout the paper comes from linear inequalities, which we obtain by restricting \eqref{eq:pde} to lines. For a direction $\v\neq0$, an open set $\Omega\subseteq\R^n$ and $f\in C(\Omega)$, a continuous function $w$ on $\Omega$ is a viscosity supersolution of $w-\tfrac12D_{\v}^2w\geq f$ in $\Omega$ if $w(x_0)-\tfrac12D_{\v}^2\psi(x_0)\geq f(x_0)$ whenever $\psi$ is $C^2$ and $w-\psi$ has a local minimum at $x_0\in\Omega$. Subsolutions are defined with a local maximum and the reverse inequality, and a viscosity solution of $w-\tfrac12D_{\v}^2w=f$ is both. If $\psi$ touches $u_n$ from below at $x_0$, the supersolution half of \eqref{eq:pde} reads $u_n(x_0)-\tfrac12\max_{\v'\in\{0,1\}^n}D^2_{\v'}\psi(x_0)\geq\phi_n(x_0)$, and discarding all the competitors $\v'\ne\v$ shows that, for each fixed $\v\in\{0,1\}^n$, $u_n$ is a viscosity supersolution of $u_n-\tfrac12D_{\v}^2u_n\geq\phi_n$ on $\R^n$. The next lemma reduces such statements to one dimension, where they can be compared with explicit solutions.

\begin{lemma}\label{lem:lines}
Let $\v\neq0$, let $\Omega\subseteq\R^n$ be open, and let $f,w\in C(\Omega)$. Suppose that $x+t\v\in\Omega$ for $a\leq t\leq b$, and write $g(t)=w(x+t\v)$ and $\tilde f(t)=f(x+t\v)$. If $w$ is a viscosity supersolution of $w-\tfrac12D_{\v}^2w\geq f$ in $\Omega$, then the following hold.
\begin{enumerate}
\item[(i)] The function $g$ is a viscosity supersolution of $g-\tfrac12g''\geq\tilde f$ on $(a,b)$.
\item[(ii)] If $H\in C^2((a,b))\cap C([a,b])$ satisfies $H-\tfrac12H''\leq\tilde f$ on $(a,b)$, $H(a)\leq g(a)$ and $H(b)\leq g(b)$, then $H\leq g$ on $[a,b]$.
\item[(iii)] If $[t-h,t+h]\subseteq[a,b]$, then
\begin{equation}\label{eq:seconddiff}
 g(t+h)+g(t-h)-2g(t)\leq2h^2\max_{|s-t|\leq h}\bigl(g(s)-\tilde f(s)\bigr).
\end{equation}
\end{enumerate}
For subsolutions, (i) and (ii) hold with all inequalities reversed. Moreover, the following hold.
\begin{enumerate}
\item[(iv)] If $w$ is a viscosity solution of $w-\tfrac12D_{\v}^2w=f$ in $\Omega$ and $\tilde f(t)=\alpha+\beta t$ on $[a,b]$, then
\begin{equation}\label{eq:linesol}
 w(x+t\v)=\alpha+\beta t+Ae^{\kappa t}+Be^{-\kappa t}\qquad(a\leq t\leq b)
\end{equation}
for some constants $A$ and $B$.
\item[(v)] If $f$ is affine on every line in the direction $\v$, and every point $y\in\Omega$ is the midpoint of a segment in the direction $\v$ on which $w(y+t\v)=f(y+t\v)+Ae^{\kappa t}+Be^{-\kappa t}$ for some constants $A$ and $B$, then $w$ is a viscosity solution of $w-\tfrac12D_{\v}^2w=f$ in $\Omega$.
\end{enumerate}
\end{lemma}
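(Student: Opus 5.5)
We prove the five parts in order; (ii)–(v) all rest on (i). Note first that for $\psi\in C^2$ and $\gamma(t)=x+t\v$, $D_{\v}^2\psi(\gamma(t))=\frac{d^2}{dt^2}\psi(\gamma(t))$, and that with $\kappa=\sqrt2$ the homogeneous equation $g-\tfrac12g''=0$ has solutions $e^{\pm\kappa t}$.

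For (i), take $\theta\in C^2$ with $g-\theta$ having a strict local minimum at $t_0\in(a,b)$, say on $[t_0-\delta,t_0+\delta]$. A function of $t$ alone is not a valid test function in $\Omega$, so we penalize the transverse directions. Write $y=x+t\v+\xi$ with $\xi\perp\v$, and set
\[
\psi_\eps(y)=\theta(t)-|\xi|^2/\eps .
\]
Minimize $w-\psi_\eps$ over a small compact tube around the segment. Since $w$ is continuous and the minimum along the segment is strict, the minimizers $y_\eps=x+t_\eps\v+\xi_\eps$ satisfy $\xi_\eps\to0$, $|\xi_\eps|^2/\eps\to0$ and $t_\eps\to t_0$. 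This is the standard penalization argument of~\cite{crandall1992user}. Because $\xi\perp\v$, we have $D_{\v}^2\psi_\eps=\theta''(t)$. The supersolution inequality at $y_\eps$ reads
\[
w(y_\eps)-\tfrac12\theta''(t_\eps)\geq f(y_\eps),
\]
and letting $\eps\to0$, using continuity of $w$ and $f$, gives $g(t_0)-\tfrac12\theta''(t_0)\geq\tilde f(t_0)$. I expect this step to be the main technical obstacle; the remaining parts are one-dimensional.

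For (ii), apply the one-dimensional comparison principle for $g-\tfrac12g''\ge\tilde f\ge H-\tfrac12H''$. Suppose $\max_{[a,b]}(H-g)=m>0$. The maximum is attained at some $t_0$, which is interior because the endpoint values satisfy $H\le g$. Then $g-H$ has a local minimum at $t_0$, so $H+\mathrm{const}$ is an admissible test function and
\[
g(t_0)-\tfrac12H''(t_0)\geq\tilde f(t_0)\geq H(t_0)-\tfrac12H''(t_0).
\]
This gives $g(t_0)\ge H(t_0)$, contradicting $m>0$. No strictness perturbation is needed, because the zeroth-order term does the work. The subsolution versions of (i) and (ii) follow by the symmetric argument.

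For (iii), let $M=\max_{|s-t|\le h}(g-\tilde f)$. On $[t-h,t+h]$ we have $\tilde f\ge g-M$, and (i) gives $g-\tfrac12 g''\ge\tilde f$ in the viscosity sense. Choose the barrier $H(s)=\ell(s)+M(s-t)^2-Mh^2$, where $\ell$ is the chord of $g$ through the endpoints, so $H(t\pm h)=g(t\pm h)$. Then $H-\tfrac12H''=H-M$. We need $H-M\le\tilde f$, and it suffices to show $H\le g$. Arguing as in (ii), at an interior maximum $t_0$ of $H-g>0$ we get
\[
g(t_0)-M\geq g(t_0)-\tfrac12H''(t_0)\geq\tilde f(t_0)\geq g(t_0)-M.
\]
This only gives equality, so instead we compare directly with $\tilde g=g-M\le\tilde f$, using $g-\tfrac12g''\ge g-M$. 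This says $g''\le2M$ in the viscosity sense. Hence $g-Ms^2$ is concave, since a viscosity subsolution of $-\phi''\ge0$ is concave by (ii)-type comparison with affine functions. Concavity gives $g(t+h)+g(t-h)-2g(t)\le2Mh^2$, which is \eqref{eq:seconddiff}.

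For (iv), (i) and its subsolution version show that $g$ is a viscosity solution of $g-\tfrac12g''=\alpha+\beta t$ on $(a,b)$. Let $G=\alpha+\beta t+Ae^{\kappa t}+Be^{-\kappa t}$, with $A$ and $B$ chosen to match $g$ at $a$ and $b$; this is possible since the exponentials form a Chebyshev pair. Applying (ii) in both directions gives $g=G$.

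For (v), take a test function $\psi$ touching $w$ from below at $y$. Restricting to the segment given by the hypothesis, $w-\psi$ along the segment has a minimum at $t=0$. The restriction of $w$ to the segment is a smooth exact solution, so at $t=0$ its second derivative dominates $D_{\v}^2\psi(y)$. This gives the supersolution inequality, with equality in the equation for the restriction. The subsolution inequality is analogous.
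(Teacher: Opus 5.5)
Your arguments for (i), (ii), (iv) and (v) are the paper's. In (i) you use the transverse penalization $\theta(t)-|\xi|^2/\eps$. There you should also mention the standard reduction to a strict minimum, which the paper obtains by subtracting $(t-t_0)^4$ from the test function, and that $y_\eps$ is eventually interior to the tube. In (ii) you compare at an interior extremum using the zeroth-order term. In (iv) you use the explicit two-point solution with two-sided comparison, and in (v) a direct comparison of second derivatives along the segment.

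The gap is in (iii). You correctly see that the barrier $H=\ell+M\bigl((s-t)^2-h^2\bigr)$ gives only $g(t_0)-M\geq\tilde f(t_0)\geq g(t_0)-M$, with no contradiction. The route you switch to has the same defect. From (i) you get that $\phi=g-Ms^2$ satisfies $\phi''\leq0$ in the viscosity sense; tested from below, this is a supersolution of $-\phi''=0$, not a subsolution. You then conclude concavity ``by (ii)-type comparison with affine functions''. That comparison fails as stated. If $\phi-L$ has a negative interior minimum for the chord $L$, touching from below only gives $L''=0\leq0$. There is no zeroth-order term to produce the contradiction, so you have moved the obstruction rather than removed it.

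What is missing is a strict perturbation. The simplest repair is to return to your first barrier with curvature $M+\delta$, $\delta>0$. Set $H_\delta=\ell+(M+\delta)\bigl((s-t)^2-h^2\bigr)$. An interior positive maximum of $H_\delta-g$ would give $g(t_0)-M-\delta\geq\tilde f(t_0)\geq g(t_0)-M$, which is impossible, so $H_\delta\leq g$. Evaluating at $s=t$ gives \eqref{eq:seconddiff} with $M+\delta$ in place of $M$, and letting $\delta\downarrow0$ finishes. This is exactly the paper's proof, which uses a quadratic $P$ with $P''=2K+2\delta$. The same perturbation is also how the concavity fact you invoked is actually proved.
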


\begin{proof}
We first prove (i) by penalizing the directions transverse to $\v$. Let $\psi\in C^2(\R)$ and let $g-\psi$ have a local minimum at $t_0\in(a,b)$; replacing $\psi$ by $\psi-(t-t_0)^4$, we may assume that the minimum is strict on $[t_0-r,t_0+r]\subset(a,b)$. Write $y=x+\tau(y)\v+\pi(y)$ with $\tau(y)=(y-x)\cdot\v/|\v|^2$ and $\pi(y)\cdot\v=0$, and for $\eps>0$ let $\psi_\eps(y)=\psi(\tau(y))-|\pi(y)|^2/\eps$. Let $y_\eps$ minimize $w-\psi_\eps$ over the compact tube $T=\{y:|\tau(y)-t_0|\leq r,\,|\pi(y)|\leq r\}$, which lies in $\Omega$ for $r$ small. Comparing with the value at $x+t_0\v$ shows that $|\pi(y_\eps)|^2/\eps$ is bounded and that every limit point $\tau_*$ of $\tau(y_\eps)$ satisfies $(g-\psi)(\tau_*)\leq(g-\psi)(t_0)$, so $\tau_*=t_0$. Hence $y_\eps\to x+t_0\v$, and for small $\eps$ the point $y_\eps$ is an interior local minimum of $w-\psi_\eps$. Since $\tau(y+s\v)=\tau(y)+s$ and $\pi(y+s\v)=\pi(y)$, we have $D_{\v}^2\psi_\eps(y_\eps)=\psi''(\tau(y_\eps))$, and the supersolution property gives $w(y_\eps)-\tfrac12\psi''(\tau(y_\eps))\geq f(y_\eps)$. Letting $\eps\downarrow0$ gives $g(t_0)-\tfrac12\psi''(t_0)\geq\tilde f(t_0)$.

For (ii), suppose that $g-H$ has a negative minimum $m$ on $[a,b]$. It is attained at some $t_*\in(a,b)$, and $H+m$ touches $g$ from below there, so $g(t_*)-\tfrac12H''(t_*)\geq\tilde f(t_*)\geq H(t_*)-\tfrac12H''(t_*)$, that is, $g(t_*)\geq H(t_*)$, which contradicts $m<0$.

For (iii), let $K=\max_{|s-t|\leq h}(g-\tilde f)$. If $\psi$ touches $g$ from below at a point $s$ with $|s-t|<h$, then (i) gives $\psi''(s)\leq2(g-\tilde f)(s)\leq2K$. For $\delta>0$ let $P$ be the quadratic polynomial with $P''=2K+2\delta$ and $P(t\pm h)=g(t\pm h)$. If $g-P$ had a negative minimum at a point $s$ with $|s-t|<h$, then $P$ plus a constant would touch $g$ from below at $s$ with second derivative $2K+2\delta>2K$, which is impossible. Hence $g\geq P$ on $[t-h,t+h]$, and at the midpoint this reads $g(t)\geq\tfrac12\bigl(g(t+h)+g(t-h)\bigr)-(K+\delta)h^2$. Letting $\delta\downarrow0$ gives \eqref{eq:seconddiff}.

The proofs for subsolutions are the same. For (iv), let $H(t)=\alpha+\beta t+Ae^{\kappa t}+Be^{-\kappa t}$ with $A$ and $B$ chosen so that $H(a)=g(a)$ and $H(b)=g(b)$, which is possible because $a\neq b$. Since $\kappa^2=2$, $H-\tfrac12H''=\alpha+\beta t$, and (ii) together with its version for subsolutions gives $H\leq g\leq H$. For (v), let $\psi$ touch $w$ from below at $y\in\Omega$, and let $g(t)=w(y+t\v)$ have the stated form near $t=0$. Then $g(t)-\psi(y+t\v)$ has a local minimum at $t=0$, so $D_{\v}^2\psi(y)\leq g''(0)=2\bigl(g(0)-f(y)\bigr)=2\bigl(w(y)-f(y)\bigr)$, because $f(y+t\v)$ is affine in $t$; touching from above gives the reverse inequality.
\end{proof}

Lemma~\ref{lem:lines} also shows that the normalization of the first bit in Section~\ref{sec:main_results} is legitimate for the viscosity formulation of \eqref{eq:fixed}. Let $\v\notin\{0,\one\}$. By (iv) and (v), \eqref{eq:fixed} holds for $\v$ if and only if $u_n$ has the form \eqref{eq:linesol}, with $\alpha=x_1$ and $\beta=v_1$, on every segment of the sector in the direction $\v$. Translation equivariance gives $u_n(x+t(\one-\v))=u_n(x-t\v)+t$, and the sector is invariant under $x\mapsto x+t\one$, so these representations for $\v$ turn into those for $\one-\v$ with $A$ and $B$ exchanged. Hence \eqref{eq:fixed} holds for $\v$ if and only if it holds for $\one-\v$. The two remaining directions both fail \eqref{eq:fixed}: for $\one$, the representation and $u_n(x+t\one)=u_n(x)+t$ would force $u_n(x)=x_1$ on the sector, and for $0$ the same follows from the subsolution property, since by Proposition~\ref{prop:hessian} below $u_n$ is touched from above by a quadratic at every point; both contradict $u_n>\phi_n$, which holds by \eqref{eq:pairbounds}.

We now turn to the regularity. For an open set $\Omega$, we write $\mathcal D'(\Omega)$ for the space of distributions on $\Omega$, that is, continuous linear functionals on $C_c^\infty(\Omega)$, and for distributions $f$ and $g$ we write $f\leq g$ in $\mathcal D'(\Omega)$ if $\langle f,\psi\rangle\leq\langle g,\psi\rangle$ for every nonnegative $\psi\in C_c^\infty(\Omega)$. For example, \eqref{eq:linearineq} below means that $\int u_n\,D_{\v}^2\psi\,dx\leq2\int(u_n-\phi_n)\psi\,dx$ for every nonnegative $\psi\in C_c^\infty(\R^n)$.

\begin{proposition}\label{prop:hessian}
For every binary direction $\v\in\{0,1\}^n$,
\begin{equation}\label{eq:linearineq}
 D_{\v}^2u_n\leq2(u_n-\phi_n)
 \qquad\text{in }\mathcal D'(\R^n).
\end{equation}
Consequently, $u_n\in C^{1,1}_{\mathrm{loc}}(\R^n)$ and
\begin{equation}\label{eq:roughhessian}
 0\preceq D^2u_n\preceq8(u_n-\phi_n)I\quad\text{almost everywhere in }\R^n.
\end{equation}
\end{proposition}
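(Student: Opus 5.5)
We prove the three parts in turn: first the distributional inequality \eqref{eq:linearineq}, then the upper Hessian bound, then convexity.

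\emph{Step 1: the linear inequality.} We argue directly from the viscosity supersolution property of $u_n-\tfrac12D_{\v}^2u_n\geq\phi_n$, recorded before Lemma~\ref{lem:lines}. Fix $\v\neq0$ (the case $\v=0$ is trivial since $u_n\geq\phi_n$). For any $x$ and $h>0$, apply Lemma~\ref{lem:lines}(iii) on the line $t\mapsto x+t\v$ with $t=0$. This gives the second difference bound
\[
 u_n(x+h\v)+u_n(x-h\v)-2u_n(x)\leq2h^2\max_{|s|\leq h}(u_n-\phi_n)(x+s\v).
\]
Next, multiply by a nonnegative test function $\psi$, integrate in $x$, and shift variables. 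The left side becomes $\int u_n(x)\bigl(\psi(x+h\v)+\psi(x-h\v)-2\psi(x)\bigr)dx$. Divide by $h^2$ and let $h\to0$. Since $u_n-\phi_n$ is continuous, the maximum on the right converges locally uniformly, and dominated convergence yields \eqref{eq:linearineq}.

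\emph{Step 2: the upper Hessian bound.} Since $u_n-\phi_n$ is bounded and continuous, \eqref{eq:linearineq} bounds the second distributional derivatives in the $2^n-1$ binary directions from above. Combined with convexity, this forces all second derivatives to be bounded measures with bounded density, hence $C^{1,1}_{\mathrm{loc}}$. To get the explicit constant $8$, it is cleaner to use the second differences directly. Convexity gives $D^2u_n\succeq0$ wherever it exists, and by Alexandrov's theorem (or by $C^{1,1}$ after Step 3) the Hessian exists a.e. For each unit vector $\xi$ we then need $\xi^TD^2u_n\xi\leq8(u_n-\phi_n)$. We write $D^2u_n=M$ with $M\succeq0$ and $M\one=0$ (from translation equivariance), and we know $\v^TM\v\leq2(u_n-\phi_n)=:2\delta$ for all binary $\v$. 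The key linear-algebra step is to bound $\lambda_{\max}(M)$. Decompose any $\xi$ with $\xi\perp\one$ (sufficient, since $M\one=0$) as a difference of nonnegative parts, $\xi=\xi^+-\xi^-$. Each nonnegative vector with entries in $[0,1]$ is, by layer-cake, a convex combination $\int_0^1\one_{\{\xi^\pm>\theta\}}d\theta$ of binary vectors. Since $w\mapsto\sqrt{w^TMw}$ is a seminorm, this gives $\sqrt{\xi^{\pm T}M\xi^\pm}\leq\|\xi^\pm\|_\infty\sqrt{2\delta}$. Hence $\sqrt{\xi^TM\xi}\leq 2\|\xi\|_\infty\sqrt{2\delta}\leq2|\xi|\sqrt{2\delta}$, so $\xi^TM\xi\leq8\delta|\xi|^2$, exactly the constant in \eqref{eq:roughhessian}.

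\emph{Step 3: rigor at the level of distributions.} The main obstacle is justifying Step 2 without assuming the Hessian exists pointwise. The plan is as follows. Convexity makes the distributional Hessian a matrix-valued nonnegative Radon measure $\mu$. The seminorm argument applies to $\mu$ tested against nonnegative $\psi$, because $\v\mapsto\langle\v^T\mu\v,\psi\rangle$ is a nonnegative quadratic form dominated by $2\int\delta\psi$ on binary vectors. This yields $\xi^T\mu\xi\leq8\delta|\xi|^2\,dx$ as measures. Consequently $\mu$ is absolutely continuous with locally bounded density, so $\nabla u_n$ is locally Lipschitz and \eqref{eq:roughhessian} holds almost everywhere. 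To pass the seminorm inequality through the measure, apply it pointwise to the density of $\mu$ with respect to its trace measure, with the singular part handled the same way. This forces the singular part to vanish, since it is dominated by a multiple of Lebesgue measure.
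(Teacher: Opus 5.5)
Your proposal is correct. Step~1 is exactly the paper's argument. The difference is in how you pass from the $2^n$ directional inequalities to the matrix bound.

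The paper averages \eqref{eq:linearineq} over $\v$ uniform on $\{0,1\}^n$. It uses the identity $\sigma^TA\sigma+\one^TA\one=2\v^TA\v+2(\one-\v)^TA(\one-\v)$ with $\sigma=2\v-\one$, together with $\E[\sigma^TA\sigma]=\tr A$, to obtain the scalar inequality $0\leq\Delta u_n\leq8(u_n-\phi_n)$ in $\mathcal D'(\R^n)$. Absolute continuity of the Hessian measure and \eqref{eq:roughhessian} then follow because a nonnegative symmetric matrix is dominated by its trace.

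You instead get $C^{1,1}_{\mathrm{loc}}$ from the coordinate directions $\v=e_i$ alone. You then bound the top eigenvalue by the layer-cake/seminorm argument applied to the nonnegative matrix $\langle\mu,\psi\rangle$ for each $0\leq\psi\in C_c^\infty(\R^n)$. That test-function version already finishes the proof: polarize to get absolute continuity of every entry, then take a countable dense set of $\xi$. So the closing sentences of your Step~3, about densities with respect to the trace measure and the singular part, are not needed.

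What the paper's route buys is a bound on the full trace. This controls the sum of the eigenvalues rather than only the largest, and it is a single linear elliptic inequality that is reused as such for $\mathcal G_n$ in Section~\ref{sec:finite}.

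What your route buys is a sharper eigenvalue constant. Since $M\one=0$, which you record but never use, you can replace $\xi$ by $\xi-(\min_i\xi_i)\one$ before the layer-cake step. This gives $\xi^TM\xi\leq2(u_n-\phi_n)(\max_i\xi_i-\min_i\xi_i)^2\leq4(u_n-\phi_n)|\xi|^2$. The constant $4$ is attained by $u_2$, whose Hessian has top eigenvalue exactly $4(u_2-\phi_2)$.
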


\begin{proof}
Fix $\v\in\{0,1\}^n$. For $\v=0$, \eqref{eq:linearineq} reads $u_n\geq\phi_n$, which follows from the supersolution property tested with a supporting affine function of the convex function $u_n$. For $\v\neq0$, since $u_n$ is a viscosity supersolution of $u_n-\tfrac12D_{\v}^2u_n\geq\phi_n$ on $\R^n$, Lemma~\ref{lem:lines}(iii) gives \eqref{eq:seconddiff} on every line in the direction $\v$. For nonnegative $\psi\in C_c^\infty(\R^n)$, a change of variables then gives
\[
 \int u_n(x)\bigl(\psi(x+h\v)+\psi(x-h\v)-2\psi(x)\bigr)dx
 \leq2h^2\int\max_{|s|\leq h}(u_n-\phi_n)(x+s\v)\,\psi(x)\,dx .
\]
Dividing by $h^2$ and letting $h\downarrow0$ gives \eqref{eq:linearineq}, by dominated convergence and the continuity of $u_n-\phi_n$.

Next we average \eqref{eq:linearineq} over $\v$. For a symmetric matrix $A$, a binary $\v$ and the sign vector $\sigma=2\v-\one$, the parallelogram law gives
\[
 \sigma^TA\sigma+\one^TA\one=2\v^TA\v+2(\one-\v)^TA(\one-\v),
\]
and when $\v$ is uniform on $\{0,1\}^n$, $\sigma$ is uniform on $\{-1,1\}^n$, so that $\E[\sigma^TA\sigma]=\tr A$. Applying this to the distributional Hessian of $u_n$, with \eqref{eq:linearineq} for $\v$ and for $\one-\v$, gives $\Delta u_n+D_{\one}^2u_n\leq8(u_n-\phi_n)$, and $D_{\one}^2u_n\geq0$ by convexity. Therefore
\begin{equation}\label{eq:laplacian}
 0\leq\Delta u_n\leq8(u_n-\phi_n)
 \qquad\text{in }\mathcal D'(\R^n),
\end{equation}
the lower bound being convexity. The distributional Hessian of a convex function is a nonnegative matrix valued measure, and \eqref{eq:laplacian} bounds its trace by a locally bounded density. Every entry is therefore absolutely continuous with locally bounded density, so $u_n\in C^{1,1}_{\mathrm{loc}}(\R^n)$, and \eqref{eq:roughhessian} follows because each eigenvalue of a nonnegative symmetric matrix is at most its trace.
\end{proof}

We emphasize the order of the argument: \eqref{eq:linearineq} is established before any regularity of $u_n$ beyond continuity and convexity, and it is what produces the regularity. It is not deduced from an almost everywhere Hessian, and it could not be: the payoff $\phi_n$ is convex with vanishing Hessian almost everywhere, so it satisfies \eqref{eq:linearineq} almost everywhere, yet it is not $C^{1,1}$.

Proposition~\ref{prop:hessian} contains the first two inequalities of Lemma~\ref{lem:regularity}. The third, $u_n-\phi_n\leq h_{n-1}(0)\,e^{-\sqrt2(x_{(1)}-x_{(2)})}$, requires an upper bound on $u_n$ with the correct dependence on the trailing experts, which comes from the global supersolution of Proposition~\ref{prop:barrier}, and we complete the proof of Lemma~\ref{lem:regularity} at the end of Section~\ref{sec:barrier}. Before that, the removal estimate \eqref{eq:pairbounds} below gives the same exponential decay with the constant $(n-1)c\geq h_{n-1}(0)$ in place of $h_{n-1}(0)$, which already shows that $u_n\in C^{1,1}(\R^n)$.

\subsection{Removing experts}

We now quantify how much the value changes when experts are removed. Removing an expert lowers the value, because the payoff $\max_i x_i$ can only decrease, while an expert trailing far behind contributes only an exponentially small amount.

Both bounds are proved by comparison. The comparison functions need not have bounded correction relative to $\phi_n$, so we first prove a comparison principle for functions of linear growth. For \eqref{eq:pde} it has an elementary proof, because a supersolution satisfies the linear inequality $W-\tfrac12D_{\v}^2W\geq\phi_n$ for every $\v$, and these inequalities survive mollification.

\begin{lemma}\label{lem:comparison}
Let $w,W\in C(\R^n)$ be a viscosity subsolution and a viscosity supersolution of \eqref{eq:pde}, respectively, and suppose that $w(x)\leq C(1+|x|)$ and $W(x)\geq-C(1+|x|)$ for some constant $C$. Then $w\leq W$ on $\R^n$.
\end{lemma}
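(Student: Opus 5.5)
The plan is to prove the comparison principle for functions of linear growth through mollification and a quadratic penalty at infinity, as the preceding paragraph suggests. The equation is linear in $u_n$ apart from the maximum over finitely many directions, and the supersolution only needs to be tested in a fixed direction.

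First I would regularize the supersolution. Since $W$ is a viscosity supersolution of \eqref{eq:pde}, discarding competitors shows that $W-\tfrac12D_{\v}^2W\geq\phi_n$ in the viscosity sense for every fixed $\v\in\{0,1\}^n$. By the argument of Proposition~\ref{prop:hessian} (Lemma~\ref{lem:lines}(iii) on lines, then a change of variables), this inequality holds in $\mathcal D'(\R^n)$. For a standard mollifier $\rho_\delta$, set $W_\delta=W*\rho_\delta$. Then $W_\delta$ is smooth and satisfies
\[
 W_\delta-\tfrac12D_{\v}^2W_\delta\geq\phi_n*\rho_\delta\geq\phi_n-\delta
\]
pointwise for every $\v$. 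The last inequality uses that $\phi_n$ is $1$-Lipschitz, and the linear lower growth bound is preserved. Hence $W_\delta+\delta$ is a classical supersolution of the maximized equation, because the inequality holds for the maximizing $\v$ as well. Taking $\max_{\v}$ is harmless: $\max_{\v}D_{\v}^2W_\delta$ is attained by some $\v$, and the inequality holds for each one.

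Next I would compare the subsolution $w$ with the perturbation $W_\delta+\delta+\eps(1+|x|^2)+M_\eps$, for a suitable constant, and take $\eps\to0$ and then $\delta\to0$. With $q(x)=1+|x|^2$, we have $D_{\v}^2q=2|\v|^2\leq2n$. So $\eps q-\tfrac12D_{\v}^2(\eps q)\geq\eps(1+|x|^2-n)$, which is bounded below by $-\eps n$. Adding the constant $\eps n$ therefore makes $\widetilde W=W_\delta+\delta+\eps(q+n)$ a smooth strict-enough supersolution, and the same holds for any $\v$. Because $w\leq C(1+|x|)$ and $W_\delta\geq-C'(1+|x|)$, the difference $w-\widetilde W$ tends to $-\infty$ as $|x|\to\infty$. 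If $\sup(w-\widetilde W)=m>0$, it is attained at some $x_0$. Then $\widetilde W+m$ is a $C^2$ function touching $w$ from above at $x_0$. The subsolution property gives
\[
 w(x_0)-\tfrac12\max_{\v}D_{\v}^2\widetilde W(x_0)\leq\phi_n(x_0),
\]
while the supersolution inequality for $\widetilde W$ gives $\widetilde W(x_0)-\tfrac12\max_{\v}D_{\v}^2\widetilde W(x_0)\geq\phi_n(x_0)$. Subtracting yields $w(x_0)\leq\widetilde W(x_0)$, which contradicts $m>0$. Hence $w\leq W_\delta+\delta+\eps(q+n)$. Letting $\eps\to0$ and then $\delta\to0$, using local uniform convergence $W_\delta\to W$ by continuity, gives $w\leq W$.

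The main obstacle is the first step: justifying that the viscosity supersolution inequality in each fixed direction passes to the mollification. That is exactly why we convert it into a distributional inequality through Lemma~\ref{lem:lines}(iii) and avoid mollifying the nonlinear equation directly. The required argument is already contained in the proof of \eqref{eq:linearineq}, with $u_n$ replaced by $W$. Only local boundedness of $W-\phi_n$ is needed for dominated convergence, and continuity of $W$ provides it.
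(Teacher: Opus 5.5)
Your argument is the paper's. You convert the fixed-direction viscosity inequalities into $D_{\v}^2W\leq2(W-\phi_n)$ in $\mathcal D'(\R^n)$ via Lemma~\ref{lem:lines}(iii), mollify to obtain the classical supersolution $W_\delta+\delta$, and compare with $w$ after adding the quadratic $\eps(n+1+|x|^2)$. The paper does the classical comparison first and mollifies afterwards, which makes no difference.

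The one step that does not go through as written is the direction $\v=0$.
\begin{itemize}
\item Lemma~\ref{lem:lines} assumes $\v\neq0$.
\item In Proposition~\ref{prop:hessian}, the case $\v=0$ (namely $u_n\geq\phi_n$) came from a supporting affine function of the convex function $u_n$. A general supersolution $W$ is not convex, so ``the argument of Proposition~\ref{prop:hessian}'' does not give $W\geq\phi_n$.
\item You cannot skip this case. At the maximum point $x_0$, the maximum of $D_{\v}^2\widetilde W(x_0)$ over $\v\in\{0,1\}^n$ may be $0$, attained only at $\v=0$, when every $D_{\v}^2W_\delta(x_0)\leq-2\eps|\v|^2$. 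Then the inequalities for $\v\neq0$ give no lower bound on $\widetilde W(x_0)-\phi_n(x_0)$.
\end{itemize}

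The paper obtains $W\geq\phi_n$ by running the classical comparison with the roles reversed, against the classical subsolution $x_i$. A direct fix is equally short. For fixed $x$ and small $\eps$, a minimizer $y_\eps$ of $W(y)+|y-x|^2/\eps$ over a closed ball about $x$ lies in its interior. So the concave quadratic $c-|y-x|^2/\eps$ touches $W$ from below at $y_\eps$. For this test function the maximum over $\v$ of $D_{\v}^2$ is $0$, so the supersolution property gives $W(y_\eps)\geq\phi_n(y_\eps)$. Since $y_\eps\to x$, continuity gives $W\geq\phi_n$, and hence $W_\delta\geq\phi_n*\rho_\delta\geq\phi_n-\delta$. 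With this added, your proof is complete.
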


\begin{proof}
We first suppose that $W\in C^2(\R^n)$ is a classical supersolution. For $\delta>0$ let $Q_\delta(x)=\delta(n+|x|^2)$, so that $D_{\v}^2Q_\delta=2\delta|\v|^2\leq2\delta n$ for every $\v\in\{0,1\}^n$. By the growth assumptions, $w-W-Q_\delta$ attains its maximum at some $x_*$, and $W+Q_\delta$ plus a constant touches $w$ from above there, so
\[
 w(x_*)-\frac12\max_{\v}D_{\v}^2W(x_*)-\delta n\leq\phi_n(x_*)\leq W(x_*)-\frac12\max_{\v}D_{\v}^2W(x_*).
\]
Hence $w-W-Q_\delta\leq(w-W-Q_\delta)(x_*)\leq\delta n-Q_\delta(x_*)\leq0$ everywhere, and letting $\delta\downarrow0$ gives $w\leq W$. The same argument applies when instead $w\in C^2(\R^n)$ is a classical subsolution. In particular, $w(x)=x_i$ satisfies $w-\tfrac12\max_{\v}D_{\v}^2w=x_i\leq\phi_n$, so $W\geq x_i$ for every $i$, that is, $W\geq\phi_n$.

In general, we approximate $W$ by classical supersolutions. As noted before Lemma~\ref{lem:lines}, $W$ is a viscosity supersolution of $W-\tfrac12D_{\v}^2W\geq\phi_n$ for every $\v\in\{0,1\}^n$. The argument for $\v\neq0$ in the proof of Proposition~\ref{prop:hessian} uses only this property and continuity, so it applies to $W$ and gives $D_{\v}^2W\leq2(W-\phi_n)$ in $\mathcal D'(\R^n)$ for every $\v\neq0$; for $\v=0$ this inequality is $W\geq\phi_n$. Let $\rho_\eps\geq0$ be a smooth mollifier supported in the ball of radius $\eps\leq1$, and put $W_\eps=W*\rho_\eps$. Testing these inequalities against $\rho_\eps(x-\cdot)$ gives $D_{\v}^2W_\eps\leq2(W_\eps-\phi_n*\rho_\eps)$ pointwise, for every $\v\in\{0,1\}^n$. Since $\phi_n$ is $1$-Lipschitz, $\phi_n*\rho_\eps\geq\phi_n-\eps$, and therefore
\[
 W_\eps+\eps-\frac12\max_{\v}D_{\v}^2W_\eps\geq\phi_n*\rho_\eps+\eps\geq\phi_n .
\]
Thus $W_\eps+\eps$ is a classical supersolution, and $W_\eps+\eps\geq-C(2+|x|)$. The first case gives $w\leq W_\eps+\eps$, and letting $\eps\downarrow0$ proves $w\leq W$, since $W$ is continuous.
\end{proof}

Recall that $\kappa=\sqrt2$ and $c=1/(2\kappa)$, and let
\[
 \mathcal B(t)=t_++c\,e^{-\kappa|t|},
\]
so that $\mathcal B\in C^2(\R)$ with $\mathcal B''(t)=c\kappa^2e^{-\kappa|t|}>0$ and $\mathcal B-\tfrac12\mathcal B''=t_+$. Indeed, both one-sided derivatives at $0$ are $\mathcal B'(0^\pm)=\tfrac12$ and $\mathcal B''(0^\pm)=c\kappa^2$, for $t>0$ we have $\mathcal B-\tfrac12\mathcal B''=t+c(1-\tfrac{\kappa^2}2)e^{-\kappa t}=t$, and for $t<0$ the two terms cancel identically. Lemma~\ref{lem:removal} proves the removal bounds by comparison with the one-dimensional barrier $\mathcal B$, and both are used again in the proof of Lemma~\ref{lem:prefix}. The resulting tail estimate~\eqref{eq:pairbounds} of Corollary~\ref{cor:pairbounds} is used in Sections~\ref{sec:inheritance}, \ref{sec:exist} and~\ref{sec:scalar_trace}.

\begin{lemma}\label{lem:removal}
Fix $j<n$ and an anchor index $i_*\leq j$. Then
\[
 0\leq u_n(x)-u_j(x_1,\ldots,x_j)
 \leq\sum_{i>j}\mathcal B(x_i-x_{i_*}).
\]
\end{lemma}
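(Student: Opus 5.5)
Both inequalities will follow from the comparison principle, Lemma~\ref{lem:comparison}, once we exhibit a suitable subsolution and a suitable supersolution of \eqref{eq:pde}. Throughout, write $w(x)=u_j(x_1,\ldots,x_j)$, regarded as a function on $\R^n$.

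\emph{Lower bound.} We check that $w$ is a viscosity subsolution of \eqref{eq:pde} on $\R^n$.
\begin{enumerate}
\item[(a)] Let $\psi\in C^2(\R^n)$ touch $w$ from above at $x_0$. Freeze the trailing coordinates at $(x_0)_{j+1},\ldots,(x_0)_n$. The restriction of $\psi$ then touches $u_j$ from above at $(x_0)_{1..j}$.
\item[(b)] The subsolution property of $u_j$ gives a $\v'\in\{0,1\}^j$ with $u_j-\tfrac12D^2_{\v'}\psi\leq\phi_j$ at that point.
\item[(c)] Extend $\v'$ by zeros to $\v\in\{0,1\}^n$. Then $D_{\v}^2\psi(x_0)$ equals the restricted second derivative, and $\phi_j\leq\phi_n$.
\item[(d)] Hence $w-\tfrac12\max_{\v}D^2_{\v}\psi\leq\phi_n$ at $x_0$, so $w$ is a subsolution.
\end{enumerate}
Since $u_j-\phi_j$ is bounded, $w$ has linear growth, and Lemma~\ref{lem:comparison} gives $w\leq u_n$.

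\emph{Upper bound.} Set $W(x)=w(x)+\sum_{i>j}\mathcal B(x_i-x_{i_*})$ and show that $W$ is a viscosity supersolution of \eqref{eq:pde}. Since $u_j$ is only known to be $C^{1,1}$ and convex at this stage, it is cleaner to verify the linear inequality $W-\tfrac12D_{\v}^2W\geq\phi_n$ for every $\v$ separately, in the viscosity sense. Fix $\v\in\{0,1\}^n$ and let $\v'$ be its first $j$ bits.
\begin{enumerate}
\item[(a)] By the discussion before Lemma~\ref{lem:lines}, $u_j$ satisfies $u_j-\tfrac12D^2_{\v'}u_j\geq\phi_j$ in the viscosity sense.
\item[(b)] For the barrier terms, $D_{\v}^2\mathcal B(x_i-x_{i_*})=(v_i-v_{i_*})^2\mathcal B''$. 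This vanishes if $v_i=v_{i_*}$ and equals $\mathcal B''$ otherwise. Using $\mathcal B''>0$ and $\mathcal B-\tfrac12\mathcal B''=t_+$, we get
\[
 \mathcal B(x_i-x_{i_*})-\tfrac12D_{\v}^2\mathcal B(x_i-x_{i_*})\geq(x_i-x_{i_*})_+ .
\]
The first case uses $\mathcal B\geq t_+$; the second is the identity.
\item[(c)] The barrier sum is $C^2$, so adding it to a viscosity supersolution preserves the supersolution property with these pointwise contributions.
\item[(d)] Summing, $W-\tfrac12D_{\v}^2W\geq\phi_j(x)+\sum_{i>j}(x_i-x_{i_*})_+$.
\item[(e)] This right-hand side is at least $\phi_n(x)$. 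Indeed, if the maximum of $x$ is attained at some $i>j$, then $x_i\leq x_{i_*}+(x_i-x_{i_*})_+\leq\phi_j+(x_i-x_{i_*})_+$, because $i_*\leq j$ gives $x_{i_*}\leq\phi_j$.
\end{enumerate}

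From these linear inequalities we pass to the full supersolution property of \eqref{eq:pde}. If $\psi$ touches $W$ from below, then for each $\v$ the inequality gives $W-\tfrac12D^2_{\v}\psi\geq\phi_n$, so in particular this holds for the maximizing $\v$. $W$ has linear growth, so Lemma~\ref{lem:comparison} yields $u_n\leq W$.

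The step needing the most care is (c) of the upper bound: adding the barrier terms inside a viscosity inequality in a degenerate direction $\v$ where $\v'$ may vanish. I would handle this by noting that the test-function argument is exact, since subtracting a $C^2$ function shifts the test function. When $\v'=0$, the inequality for $u_j$ reduces to $u_j\geq\phi_j$, which holds.
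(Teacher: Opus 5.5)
Your proposal is correct and follows the paper's proof essentially step for step. The extension $u_j(x_1,\ldots,x_j)$ is a subsolution by freezing the trailing coordinates, and $u_j(x_1,\ldots,x_j)+\sum_{i>j}\mathcal B(x_i-x_{i_*})$ is a supersolution checked one binary direction at a time (via $D_{\v}^2\mathcal B(x_i-x_{i_*})=(v_i-v_{i_*})^2\mathcal B''$, the identity $\mathcal B-\tfrac12\mathcal B''=t_+$, and the case split on where $\phi_n$ is attained), followed by Lemma~\ref{lem:comparison}. The one step you leave implicit is that, writing $x=(x',x'')$, the extension inherits the linear inequality in a direction $\v=(\v',\v'')$ with $\v''\neq0$, where freezing $x''$ no longer works because $D_{\v}^2\psi$ involves the trailing coordinates. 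The paper handles this by restricting the test function minus the barrier to the line $t\mapsto x+t\v$, along which the extension equals $u_j(x'+t\v')$. It then applies Lemma~\ref{lem:lines}(i) to $u_j$ in the direction $\v'$ when $\v'\neq0$. When $\v'=0$, it uses instead that this line is a line of constancy, so the test function has nonpositive second derivative along it, together with $u_j\geq\phi_j$.
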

\begin{proof}
Write $x=(x',x'')$ with $x'\in\R^j$, and let $\widetilde u_j(x)=u_j(x')$ be the extension that is constant in the last $n-j$ coordinates. We first show that $\widetilde u_j$ is a subsolution. Let $\psi\in C^2$ touch $\widetilde u_j$ from above at $x$. Then $\psi'(y)=\psi(y,x'')$ touches $u_j$ from above at $x'$, and $D_{(\v',0)}^2\psi(x)=D_{\v'}^2\psi'(x')$ for $\v'\in\{0,1\}^j$, so
\[
 \widetilde u_j(x)-\frac12\max_{\v\in\{0,1\}^n}D_{\v}^2\psi(x)
  \leq u_j(x')-\frac12\max_{\v'\in\{0,1\}^j}D_{\v'}^2\psi'(x')
  \leq\phi_j(x')\leq\phi_n(x) .
\]
Since $\widetilde u_j$ and $u_n$ have at most linear growth, Lemma~\ref{lem:comparison} gives $\widetilde u_j\leq u_n$.

For the upper bound put $V=\widetilde u_j+\sum_{i>j}\mathcal B(x_i-x_{i_*})$. We will prove that $V$ is a supersolution. Let $\psi\in C^2$ touch $V$ from below at $x$, let $\v=(\v',\v'')\in\{0,1\}^n$, and put $\chi=\psi-\sum_{i>j}\mathcal B(x_i-x_{i_*})$, which is $C^2$ because $\mathcal B$ is. Then $\chi$ touches $\widetilde u_j$ from below at $x$, so $t\mapsto\chi(x+t\v)$ touches $t\mapsto u_j(x'+t\v')$ from below at $t=0$. If $\v'\neq0$, Lemma~\ref{lem:lines}(i), applied to $u_j$ in the direction $\v'$, gives $u_j(x')-\tfrac12D_{\v}^2\chi(x)\geq\phi_j(x')$. If $\v'=0$, the function $t\mapsto u_j(x'+t\v')$ is constant, so $D_{\v}^2\chi(x)\leq0$, and the same inequality follows from $u_j\geq\phi_j$, which is \eqref{eq:linearineq} with $\v=0$. Moreover $D_{\v}^2\mathcal B(x_i-x_{i_*})=(v_i-v_{i_*})^2\mathcal B''\leq\mathcal B''$, because $(v_i-v_{i_*})^2\in\{0,1\}$ and $\mathcal B''\geq0$. Hence
\[
 V(x)-\frac12D_{\v}^2\psi(x)
  \geq\phi_j(x')+\sum_{i>j}\Bigl(\mathcal B-\frac12\mathcal B''\Bigr)(x_i-x_{i_*})
  =\phi_j(x')+\sum_{i>j}(x_i-x_{i_*})_+ .
\]
If $\phi_n$ is attained at an index $\leq j$ the right side is at least $\phi_j(x')=\phi_n(x)$; if it is attained at $i_0>j$ then, using $\phi_j(x')\geq x_{i_*}$,
\[
 \phi_j(x')+\sum_{i>j}(x_i-x_{i_*})_+\geq x_{i_*}+(x_{i_0}-x_{i_*})=\phi_n(x) .
\]
Taking the maximum over $\v$ shows that $V$ is a supersolution. It has at most linear growth, although $V-\phi_n$ need not be bounded, so Lemma~\ref{lem:comparison} gives $u_n\leq V$.
\end{proof}

Lemma~\ref{lem:removal} gives two-sided bounds on the correction when one expert leads.

\begin{corollary}\label{cor:pairbounds}
In the coordinates $x=(s,z)$,
\begin{equation}\label{eq:pairbounds}
 c\,e^{-\kappa(s-M)}\leq u_n(s,z)-s
 \leq c\sum_{i=1}^{n-1} e^{-\kappa(s-z_i)},\qquad
 s\geq M:=\max_i z_i.
\end{equation}
\end{corollary}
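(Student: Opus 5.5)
Both inequalities in \eqref{eq:pairbounds} will come from Lemma~\ref{lem:removal}, applied with two different choices of $j$, combined with the explicit two-expert solution $u_2(x)=\phi_2(x)+c\,e^{-\kappa|x_1-x_2|}$ and the permutation invariance of $u_n$. Throughout, $x=(s,z)$ with $s\geq M=\max_i z_i$, so the leader is the first coordinate and $\phi_n(x)=s$.

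\emph{Upper bound.} Take $j=1$ and anchor $i_*=1$ in Lemma~\ref{lem:removal}. For one expert the PDE reads $u_1-\tfrac12u_1''=x_1$, whose solution with bounded correction is $u_1(x_1)=x_1$. The lemma therefore gives
\[
 u_n(s,z)-s\leq\sum_{i=1}^{n-1}\mathcal B(z_i-s).
\]
Since $z_i-s\leq0$, we have $(z_i-s)_+=0$ and $\mathcal B(z_i-s)=c\,e^{-\kappa(s-z_i)}$. Summing over $i$ gives the right inequality.

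\emph{Lower bound.} Let $i_0$ be an index with $z_{i_0}=M$. By permutation invariance, move that coordinate to position $2$, and apply the lower half of Lemma~\ref{lem:removal} with $j=2$:
\[
 u_n(x)\geq u_2(s,M)=\max(s,M)+c\,e^{-\kappa|s-M|}=s+c\,e^{-\kappa(s-M)}.
\]
The last equality uses $s\geq M$. Subtracting $s$ gives the left inequality.

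The main point to check is the identification of $u_1$ and $u_2$, meaning that the removal lemma refers to the same normalized value functions. For $u_1$, the function $x_1$ is an exact classical solution with bounded correction, and the comparison in Lemma~\ref{lem:comparison} pins it down. For $u_2$, the formula is quoted from \cite{calder2026explicit}, and Lemma~\ref{lem:lines}(iv) confirms its form along the direction $(1,0)$. Everything else is direct substitution.
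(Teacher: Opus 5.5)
Your proof is correct and is the paper's argument. The upper bound comes from Lemma~\ref{lem:removal} with $j=1$, $u_1(x_1)=x_1$ and $\mathcal B(t)=c\,e^{\kappa t}$ for $t\leq0$; the lower bound comes from the same lemma with $j=2$, after relabeling so the largest follower sits in the second slot, together with the explicit two-expert solution. Your side remark invoking Lemma~\ref{lem:lines}(iv) to confirm $u_2$ is unnecessary: the paper simply quotes that formula from the literature.
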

\begin{proof}
For the upper bound, apply Lemma~\ref{lem:removal} with $j=1$ and $u_1(x_1)=x_1$, and use $\mathcal B(t)=c\,e^{\kappa t}$ for $t\leq0$. For the lower bound, relabel so that the second coordinate is the largest follower and apply Lemma~\ref{lem:removal} with $j=2$ and $u_2(x_1,x_2)=x_1+ce^{-\kappa(x_1-x_2)}$.
\end{proof}

The correction $u_n(s,z)-s$ is therefore squeezed between the contribution of the single best follower and the sum of the contributions of all followers, each decaying exponentially at rate $\kappa$ in the gap to the leader. Both bounds have the same exponential rate, differing only in the prefactor: one term versus $n-1$ terms. So \eqref{eq:pairbounds} pins down the decay rate $e^{-\kappa s}$ exactly.

\subsection{Prefix inheritance and five experts}\label{sec:inheritance}

We now introduce the concept of \emph{prefix inheritance}, which allows us to compare optimal adversary strategies for different numbers of experts. In particular, we prove that if $\v$ satisfies \eqref{eq:fixed} for $n$ experts, then its first $j$ bits satisfy \eqref{eq:fixed} for $j$ experts. This is used later to prove Theorem~\ref{thm:five}.
\begin{lemma}\label{lem:prefix}
Let $\v=(\v',\v'')\in\{0,1\}^n$ with $\v'\in\{0,1\}^j$, and suppose \eqref{eq:fixed} holds. Then $u_j-\tfrac12D_{\v'}^2u_j=x_1$ on $\{x_1>\cdots>x_j\}$ in the viscosity sense.
\end{lemma}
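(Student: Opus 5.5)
We pass to the limit by sending the bottom $n-j$ experts to $-\infty$, using Lemma~\ref{lem:removal} for uniform convergence and the line representation of Lemma~\ref{lem:lines}(iv)--(v) to carry the linear equation across.

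Fix a point $y'=(y_1,\dots,y_j)$ with $y_1>\cdots>y_j$. For a parameter $L\to\infty$, place the trailing experts at $x''_L=(y_j-L-1,\dots,y_j-L-(n-j))$. Then $x_L=(y',x''_L)$ lies in the strictly ordered sector of $\R^n$. For $|t|$ small, uniformly in $L$, the points $x_L+t\v$ also stay in the sector: each coordinate moves by at most $|t|$, and the gaps are bounded below independently of $L$. The segment $\{x_L+t\v:|t|\le\delta\}$ therefore lies in the sector. By \eqref{eq:fixed} and Lemma~\ref{lem:lines}(iv), with $\phi_n=x_1$ there, we get
\[
 u_n(x_L+t\v)=y_1+v_1t+A_Le^{\kappa t}+B_Le^{-\kappa t},\qquad|t|\le\delta .
\]

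Next we control the limit. By Lemma~\ref{lem:removal} with anchor $i_*=1$,
\[
 \bigl|u_n(x_L+t\v)-u_j(y'+t\v')\bigr|\le\sum_{i>j}\mathcal B\bigl((x_L+t\v)_i-(y_1+tv_1)\bigr).
\]
The arguments of $\mathcal B$ tend to $-\infty$ uniformly in $|t|\le\delta$, so the right side is at most $(n-j)c\,e^{-\kappa(L-C)}\to0$. Evaluating at $t=\pm\delta$ gives a $2\times2$ linear system for $(A_L,B_L)$ with nondegenerate matrix and convergent data. Hence $A_L\to A$ and $B_L\to B$, and
\[
 u_j(y'+t\v')=y_1+v_1t+Ae^{\kappa t}+Be^{-\kappa t}\qquad\text{on }|t|\le\delta .
\]
If $\v'\neq0$, Lemma~\ref{lem:lines}(v) applies on the open set $\{x_1>\cdots>x_j\}$ with $f=x_1$, which is affine along lines in direction $\v'$. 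Since $y'$ is arbitrary, $u_j-\tfrac12D_{\v'}^2u_j=x_1$ there.

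The main obstacle is the degenerate case $\v'=0$, where $y'+t\v'=y'$ and (v) does not apply. In that case the display says $y_1=u_j(y')-A-B+(\text{terms})$ for all small $t$. Since the left side is constant in $t$ while $v_1t+Ae^{\kappa t}+Be^{-\kappa t}$ must then be constant, we get $A=B=0$ and $v_1=0$, hence $u_j(y')=y_1$. This is exactly what the equation with $D_0^2=0$ asserts, so the claim holds in this case too (the statement is then $u_j=x_1$, and it remains consistent; its impossibility is discussed separately in the paper). For $\v'=\one$ nothing special is needed, since (v) applies directly.

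A second point to verify is that the uniform-in-$L$ choice of $\delta$ is legitimate. It is: $\delta$ below half the minimal gap of $y'$ and below $\tfrac12$ suffices, because the bottom block is spaced by $1$ and separated from $y_j$ by $L+1$.
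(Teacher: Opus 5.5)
Your proof is correct. It uses the same overall strategy as the paper: send the bottom $n-j$ experts to $-\infty$, and get convergence to $u_j$ from Lemma~\ref{lem:removal} with anchor $i_*=1$. Where you differ is in how the linear equation is carried to the limit.

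\textbf{The paper's route.} It translates the bottom coordinates by $k$ and observes that the translates solve \eqref{eq:fixed} on sectors exhausting $\{x_1>\cdots>x_j\}\times\{x_{j+1}>\cdots>x_n\}$. It then invokes stability of viscosity solutions under locally uniform convergence. Finally it uses that $D_{\v}^2=D_{\v'}^2$ on functions independent of the last $n-j$ coordinates.

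\textbf{Your route.} You use the characterization from Lemma~\ref{lem:lines}(iv)--(v). Along each segment in the sector, $u_n$ lies in the two-parameter family $y_1+v_1t+Ae^{\kappa t}+Be^{-\kappa t}$, and the coefficients are fixed by the two endpoint values. Pointwise convergence at $t=\pm\delta$ therefore forces $A_L\to A$ and $B_L\to B$. Part (v) then rebuilds the viscosity equation for $u_j$ directly in $\R^j$.

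\textbf{Comparison.} Your argument stays entirely within Lemma~\ref{lem:lines}. It avoids both the general stability theorem and the test-function argument for the extension. The cost is that it depends on the solutions along segments forming a finite-dimensional family, which holds here because the coefficients are constant and the right side is affine. It also forces a separate treatment of $\v'=0$, where (v) is unavailable. Your handling of that case is correct.

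One small omission: the literal case $\v=0$ is not covered either, since (iv) also needs a nonzero direction. There \eqref{eq:fixed} just says $u_n=x_1$ pointwise, and the conclusion is immediate. Neither degenerate case arises under the paper's normalization $v_1=1$.

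The paper's route is shorter and treats all $\v$ uniformly.
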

\begin{proof}
For $k>0$ set $u^{(k)}(x)=u_n(x_1,\ldots,x_j,x_{j+1}-k,\ldots,x_n-k)$. Since \eqref{eq:fixed} has constant coefficients, $u^{(k)}$ satisfies the same equation on the translated sector, which contains any fixed compact subset of $\{x_1>\cdots>x_j\}\times\{x_{j+1}>\cdots>x_n\}$ once $k$ is large. By Lemma~\ref{lem:removal} with anchor $i_*=1$,
\[
 0\leq u^{(k)}(x)-u_j(x_1,\ldots,x_j)
   \leq\sum_{i>j}\mathcal B(x_i-k-x_1)\longrightarrow0
\]
uniformly on that compact set, because $\mathcal B(t)=ce^{\kappa t}$ for $t\leq0$. So $u^{(k)}$ converges locally uniformly to the extension of $u_j$, which is constant in the last $n-j$ coordinates. Viscosity stability under locally uniform convergence passes the linear equation to the limit, and for a function independent of the last coordinates $D_{\v}^2=D_{\v'}^2$. No differentiated limit is required.
\end{proof}

Applied with $j=5$, Lemma~\ref{lem:prefix} reduces the possible directions for every $n\geq5$ to those whose five-expert prefix is globally optimal for five experts. We now prove Theorem~\ref{thm:five}, which pins that prefix down. The proof uses the symmetry of $u_5$ at the origin.

\begin{proof}[Proof of Theorem~\ref{thm:five}]
Let $\v'\in\{0,1\}^5$ have first bit one and satisfy \eqref{eq:fixed} for $n=5$. Since $u_5\in C^2(\R^5)$~\cite{calder2026explicit}, \eqref{eq:fixed} extends by continuity to the closed sector, where it says that $\v'$ attains the maximum in \eqref{eq:pde}.

We first count the ones of $\v'$. At the origin, where $\phi_5=0$,
\[
 D_{\v'}^2u_5(0)=2u_5(0)=\max_{\v\in\{0,1\}^5}D_{\v}^2u_5(0).
\]
Permutation invariance makes $D^2u_5(0)$ commute with every coordinate permutation, so $D^2u_5(0)=\lambda_5I+\mu_5\one\one^T$, and the constraint $D^2u_5(0)\one=0$ gives $\mu_5=-\lambda_5/5$. Hence, for $\v$ with $k$ ones,
\[
 D_{\v}^2u_5(0)=\lambda_5\Bigl(k-\frac{k^2}5\Bigr),
\]
which equals $\tfrac45\lambda_5,\tfrac65\lambda_5,\tfrac65\lambda_5,\tfrac45\lambda_5,0$ for $k=1,\ldots,5$ and $0$ for $k=0$. Convexity gives $\lambda_5\geq0$, so the maximum is $\tfrac65\lambda_5=2u_5(0)$, and since $u_5(0)>0$ by \eqref{eq:pairbounds}, $\lambda_5=\tfrac53u_5(0)>0$ and the maximum is attained exactly for $k\in\{2,3\}$. So \textit{$\v'$ has two or three ones}.

Next we determine the first two bits, using Lemma~\ref{lem:prefix} with $j=2$. The candidates with first bit one are $10$ and $11$. Since $D^2u_2\one=0$ we have $D_{11}^2u_2=0$, and \eqref{eq:fixed} would force $u_2=x_1$, contradicting $u_2=x_1+ce^{-\kappa(x_1-x_2)}$. Hence the first two bits are $10$, and the candidates are $10100$, $10010$, $10001$, $10110$, $10101$ and $10011$.

It remains to eliminate the last five. By~\cite[Theorem~2.2]{calder2026explicit}, the COMB direction $10101$ attains the Hamiltonian maximum exactly on $\{x_1=x_2\}\cap\{x_3=x_4\}$, a set with empty interior in the sector; so $10101$ fails \eqref{eq:fixed}. For the other four we use two symmetries of $D^2u_5$. First, $D_{\v}^2u_5=D_{\one-\v}^2u_5$ because $D^2u_5\one=0$. Second, at a point $x$ fixed by a transposition $\sigma$ we have $D^2u_5(x)=P_\sigma D^2u_5(x)P_\sigma^T$, from where $D_{\v}^2u_5(x)=D_{P_\sigma\v}^2u_5(x)$. Complementing $10010$ and then transposing the first two coordinates gives COMB, so on $\{x_1=x_2\}$ COMB would attain the maximum; choosing a point of that face with $x_3>x_4$ contradicts~\cite[Theorem~2.2]{calder2026explicit}. Transposing coordinates $3$ and $4$ turns $10011$ into COMB, which would attain the maximum on $\{x_3=x_4\}$, and a point of that face with $x_1>x_2$ gives a contradiction. In the same way, transposing coordinates $4$ and $5$ turns $10110$ into COMB, which is contradicted at a point of $\{x_4=x_5\}$ with $x_1>x_2$. Finally, transposing coordinates $4$ and $5$, complementing, and transposing the first two coordinates turns $10001$ into COMB, which is contradicted at a point of $\{x_1=x_2\}\cap\{x_4=x_5\}$ with $x_3>x_4$. Only $10100$ remains, which completes the proof. %
\end{proof}

Combining Lemma~\ref{lem:prefix} and Theorem~\ref{thm:five}, for every $n\geq5$ a direction satisfying \eqref{eq:fixed} must begin with $10100$. For $n=6$ this leaves $101000$ and $101001$; Theorem~\ref{thm:exclude}, proved in Section~\ref{sec:conclude}, removes the second, and Section~\ref{sec:last} removes the first.
\section{The one-leader profile}\label{sec:profile}

This section proves Theorem~\ref{thm:profile}, Lemma~\ref{lem:regularity}, Corollary~\ref{cor:player} and Theorem~\ref{thm:exclude}. Section~\ref{sec:algebra} collects the properties of the profile $h_m$ used in the rest of the section. Theorem~\ref{thm:profile} follows by matching an upper bound from an explicit global supersolution (Section~\ref{sec:barrier}) with a lower bound from a minimality property of the profile (Section~\ref{sec:exist}), and Section~\ref{sec:conclude} uses the resulting profile to prove Theorem~\ref{thm:exclude}.

\subsection{Properties of the profile}\label{sec:algebra}

All properties of $h_m$ below, for any $m\geq1$, are derived from its representation~\eqref{eq:profile} as an exponential moment of $\max_i(z_i+E_i)$. %
Write $M=\max_iz_i$, and set
\[
 \rho_i(t)=e^{-2\kappa(t-z_i)}\qquad(t\geq M).
\]

\begin{lemma}\label{lem:h_m}
For every $z\in\R^m$,
\begin{equation}\label{eq:integral}
 h_m(z)=\frac{e^{\kappa M}}{4\kappa}
 +\frac14\int_M^\infty e^{\kappa t}
       \left[1-\prod_{i=1}^m(1-\rho_i(t))\right]\dd.
\end{equation}
If $z_1\geq\cdots\geq z_m$, the profile is the finite sum
\begin{equation}\label{eq:sum}
 h_m(z)=\frac{e^{\kappa z_1}}{2\kappa}
 +\frac1{2\kappa}\sum_{\varnothing\ne J\subset\{2,\ldots,m\}}
 \frac{(-1)^{|J|+1}}{4|J|^2-1}
 \exp\!\left(2\kappa\sum_{i\in J}z_i-(2|J|-1)\kappa z_1\right).
\end{equation}
In particular,
\begin{equation}\label{eq:originprofile}
 h_m(0)=\frac{(2m)!!}{4\kappa(2m-1)!!}.
\end{equation}
\end{lemma}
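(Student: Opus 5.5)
Write $Y=\max_i(z_i+E_i)$. Since $E_i\geq0$, we have $Y\geq M$ almost surely. By independence, for $t\geq M$,
\[
 \P(Y\leq t)=\prod_{i=1}^m\P(E_i\leq t-z_i)=\prod_{i=1}^m\bigl(1-e^{-2\kappa(t-z_i)}\bigr)=\prod_{i=1}^m(1-\rho_i(t)).
\]
The plan for \eqref{eq:integral} is the layer-cake formula applied to the nonnegative variable $e^{\kappa Y}\geq e^{\kappa M}$:
\[
 \E e^{\kappa Y}=e^{\kappa M}+\int_M^\infty\kappa e^{\kappa t}\,\P(Y>t)\,dt.
\]
Dividing by $4\kappa$ gives exactly \eqref{eq:integral}. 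The integral converges because $1-\prod(1-\rho_i)\leq\sum_i\rho_i=O(e^{-2\kappa t})$ dominates the factor $e^{\kappa t}$.

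For \eqref{eq:sum}, assume $z_1\geq\cdots\geq z_m$, so $M=z_1$. We expand
\[
 1-\prod_{i=1}^m(1-\rho_i)=\sum_{\varnothing\neq S\subseteq\{1,\dots,m\}}(-1)^{|S|+1}\prod_{i\in S}\rho_i,
\]
and integrate term by term. With $k=|S|$, each term gives
\[
 \frac14\int_{z_1}^\infty e^{\kappa t}e^{-2\kappa kt+2\kappa\sum_S z_i}\,dt=\frac{1}{4\kappa(2k-1)}\exp\Bigl(2\kappa\sum_{i\in S}z_i-(2k-1)\kappa z_1\Bigr).
\]
Next we pair each $J\subseteq\{2,\dots,m\}$ with the two sets $S=J$ and $S=J\cup\{1\}$. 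For $S=J\cup\{1\}$ the factor $e^{2\kappa z_1}$ combines with $e^{-(2|J|+1)\kappa z_1}$ to give the same exponential as $S=J$. The two terms then carry the coefficient
\[
 \frac{(-1)^{|J|+1}}{4\kappa}\Bigl(\frac1{2|J|-1}-\frac1{2|J|+1}\Bigr)=\frac{(-1)^{|J|+1}}{2\kappa(4|J|^2-1)}.
\]
The case $J=\varnothing$ corresponds to $S=\{1\}$ alone. It contributes $e^{\kappa z_1}/(4\kappa)$, which together with the boundary term $e^{\kappa M}/(4\kappa)$ gives the leading $e^{\kappa z_1}/(2\kappa)$. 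This bookkeeping is the only delicate point, and I would check it against the case $m=1$: there $h_1(z)=e^{\kappa z}/(2\kappa)$, which matches $\E e^{\kappa E}=2\kappa/\kappa=2$.

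For \eqref{eq:originprofile} we set $z=0$ directly in the representation, so $Y=\max_iE_i$. Rather than summing the alternating series, I would use the Rényi representation of exponential order statistics,
\[
 \max_iE_i\overset{d}{=}\sum_{k=1}^m\frac{F_k}{k},
\]
where the $F_k$ are i.i.d.\ exponential of rate $2\kappa$. This gives
\[
 \E e^{\kappa Y}=\prod_{k=1}^m\frac{2\kappa}{2\kappa-\kappa/k}=\prod_{k=1}^m\frac{2k}{2k-1}=\frac{(2m)!!}{(2m-1)!!}.
\]
Dividing by $4\kappa$ yields \eqref{eq:originprofile}.

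Alternatively, \eqref{eq:originprofile} follows from \eqref{eq:integral} with the substitution $u=e^{-2\kappa t}$, which turns the integral into a Beta integral $\tfrac18\int_0^1u^{-3/2}(1-(1-u)^m)\,du$. That integral can be handled by induction on $m$.
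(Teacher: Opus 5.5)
Your proposal is correct and follows the paper's proof essentially step for step: the tail-integral formula for $\E e^{\kappa Y}$ using $Y\geq M$, the inclusion--exclusion expansion paired according to whether $S$ contains the index $1$, and the exponential order-statistic (R\'enyi) representation at $z=0$, which is exactly the paper's sum of independent exponentials with rates $2\kappa,4\kappa,\dots,2m\kappa$. There is one small slip, confined to your optional alternative: since $dt=-du/(2\kappa u)$, the substitution $u=e^{-2\kappa t}$ gives $\frac1{8\kappa}\int_0^1u^{-3/2}(1-(1-u)^m)\,du$, not $\frac18\int_0^1u^{-3/2}(1-(1-u)^m)\,du$.
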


\begin{proof}
Write $W=\max_i(z_i+E_i)$. For $t\geq M$ we have $\P(E_i\leq t-z_i)=1-\rho_i(t)$, so $\P(W\leq t)=\prod_{i=1}^m(1-\rho_i(t))$. Since $W\geq M$ almost surely, $e^{\kappa W}=e^{\kappa M}+\int_M^\infty\kappa e^{\kappa t}\one_{\{W>t\}}\,dt$, and taking expectations gives the tail-integral formula $\E e^{\kappa W}=e^{\kappa M}+\kappa\int_M^\infty e^{\kappa t}\P(W>t)\dd$. Dividing by $4\kappa$ gives \eqref{eq:integral}.

Next we expand the product in \eqref{eq:integral}. By definition,
\[
 1-\prod_{i=1}^m(1-\rho_i(t))
  =\sum_{\varnothing\ne S\subseteq\{1,\ldots,m\}}(-1)^{|S|+1}
      \prod_{i\in S}\rho_i(t),\qquad
 \prod_{i\in S}\rho_i(t)
  =\exp\Bigl(2\kappa\sum_{i\in S}z_i-2\kappa|S|t\Bigr),
\]
and each term integrates to
\[
 \frac14\int_M^\infty e^{\kappa t}\prod_{i\in S}\rho_i(t)\dd
 =\frac1{4\kappa(2|S|-1)}
    \exp\Bigl(2\kappa\sum_{i\in S}z_i-(2|S|-1)\kappa M\Bigr).
\]
Assume $z_1=M$ and split the subsets according to whether they contain the index $1$: write $S=J$ or $S=\{1\}\cup J$ with $J\subseteq\{2,\ldots,m\}$. Using $z_1=M$, both choices produce the same exponential $\exp(2\kappa\sum_{i\in J}z_i-(2|J|-1)\kappa z_1)$, with signs $(-1)^{|J|+1}$ and $(-1)^{|J|}$ and denominators $2|J|-1$ and $2|J|+1$. For $|J|=p\geq1$ they combine into
\[
 \frac{(-1)^{p+1}}{4\kappa}\left(\frac1{2p-1}-\frac1{2p+1}\right)
  =\frac{(-1)^{p+1}}{2\kappa(4p^2-1)},
\]
while $J=\varnothing$ contributes $e^{\kappa z_1}/(4\kappa)$, which together with the first term of \eqref{eq:integral} gives $e^{\kappa z_1}/(2\kappa)$. This is \eqref{eq:sum}. At $z=0$, $W$ is the maximum of $m$ i.i.d. exponential random variables with rate $2\kappa$, so $\P(W\leq t)=\left(1-e^{-2\kappa t}\right)^m$ and the standard order-statistic representation gives $W\overset{d}{=}\sum_{j=1}^m X_j$, where the $X_j$ are independent exponential random variables with rates $2\kappa,\;4\kappa,\;\ldots,\;2m\kappa$. Consequently,
\[
\E\!\left[e^{\kappa W}\right]
=\prod_{j=1}^m\E\!\left[e^{\kappa X_j}\right]
=\prod_{j=1}^m\frac{2j\kappa}{2j\kappa-\kappa}
=\prod_{j=1}^m\frac{2j}{2j-1}
=\frac{(2m)!!}{(2m-1)!!}.
\]
Therefore, we obtain
\[
h_m(0)=\frac{(2m)!!}{4\kappa(2m-1)!!}.\qedhere
\]
\end{proof}

We next collect the properties of $h_m$ used in the rest of the paper.

\begin{lemma}\label{lem:profile}
Let $m\geq1$ and $M=\max_iz_i$. The profile $h_m$ is symmetric, nondecreasing in each coordinate, and belongs to $C^2(\R^m)$. Moreover, the following hold.
\begin{enumerate}
\item[(i)] For all $z\in\R^m$ and $a\in\R$,
\begin{equation}\label{eq:boundsprofile}
 h_m(z+a\one)=e^{\kappa a}h_m(z),\qquad
 c\,e^{\kappa M}\leq h_m(z)\leq h_m(0)e^{\kappa M},
\end{equation}
and sending one coordinate to $-\infty$ removes it,
\begin{equation}\label{eq:onecoord}
 \lim_{z_m\to-\infty}h_m(z)=h_{m-1}(z_1,\ldots,z_{m-1})\quad(m\geq2),\qquad h_1(z_1)=c\,e^{\kappa z_1}.
\end{equation}
\item[(ii)] The mixed second derivatives are negative, $(h_m)_{ij}<0$ for $i\neq j$, and, with $\mathcal L_i=\tfrac12\partial_{ii}-\kappa\partial_i$,
\begin{equation}\label{eq:diag}
 2\mathcal L_ih_m=
 \begin{cases}
 -\dfrac\kappa2 e^{\kappa M}
       \displaystyle\prod_{\ell\ne i}\bigl(1-\rho_\ell(M)\bigr),
       &\text{if $z_i=M$ is the unique maximum},\\[4pt]
 0,&\text{otherwise}.
 \end{cases}
\end{equation}
\item[(iii)] For a set $J$ of follower indices, let $D_J=\sum_{i\in J}\partial_i$. Then
\begin{equation}\label{eq:deficit}
 D_J^2h_m-2\kappa D_Jh_m\leq0,
\end{equation}
with strict inequality if $|J|\geq2$, or if $J=\{i\}$ and $z_i$ is the unique maximum, and with equality otherwise.
\end{enumerate}
\end{lemma}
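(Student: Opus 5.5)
Everything will be derived from the probabilistic representation $h_m(z)=\frac1{4\kappa}\E e^{\kappa W}$ with $W=\max_i(z_i+E_i)$, together with the integral formula \eqref{eq:integral} of Lemma~\ref{lem:h_m}.

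\emph{Basic properties.} Symmetry is clear, since the $E_i$ are i.i.d. Monotonicity in each coordinate holds because $W$ is pathwise nondecreasing in each $z_i$.

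For (i), the scaling $h_m(z+a\one)=e^{\kappa a}h_m(z)$ follows from $W(z+a\one)=W(z)+a$. The lower bound comes from $W\geq z_{i_0}+E_{i_0}$ with $z_{i_0}=M$, together with $\E e^{\kappa E}=2\kappa/(2\kappa-\kappa)=2$, which gives $h_m\geq 2e^{\kappa M}/(4\kappa)=ce^{\kappa M}$. The upper bound comes from monotonicity, $h_m(z)\leq h_m(M\one)=e^{\kappa M}h_m(0)$. For the removal limit \eqref{eq:onecoord}, note that $\max_i(z_i+E_i)\to\max_{i<m}(z_i+E_i)$ almost surely as $z_m\to-\infty$. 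Dominated convergence then applies, with dominating function $e^{\kappa(\max_i z_i^+ +\sum_i E_i)}$, which is integrable. The identity $h_1=ce^{\kappa z_1}$ is the computation just made.

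\emph{Regularity.} I will use the sector formula \eqref{eq:sum}. On each closed ordered sector it is a finite sum of exponentials, hence smooth up to the boundary. It remains to check that the one-sided derivatives up to order two agree across the ties $z_i=z_j$. The cleanest route is to differentiate \eqref{eq:integral} directly. The integrand $e^{\kappa t}[1-\prod(1-\rho_i)]$ is smooth in $z$. Its lower limit $M$ enters only through the boundary term, and at $t=M$ the bracket does not vanish. So I will instead extend the integral to start from $-\infty$ using $\P(W>t)=1$ for $t<M$, rewriting
\[
h_m(z)=\tfrac14\int_{-\infty}^\infty e^{\kappa t}\Bigl[1-\prod_i(1-\rho_i(t))_+\Bigr]\,dt
\]
(with the convention $e^{\kappa t}\to 0$ at $-\infty$). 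The factor $(1-\rho_i)_+=F_i(t)$ is the CDF of $z_i+E_i$. Differentiating in $z_i$ produces $-\partial_{z_i}F_i=f_i(t)$, the density $2\kappa e^{-2\kappa(t-z_i)}\one_{t>z_i}$. That gives
\[
\partial_i h_m=\tfrac14\int e^{\kappa t}f_i\prod_{\ell\neq i}F_\ell\,dt,
\]
which is continuous. A second differentiation yields a jump term, since $f_i$ has a jump at $z_i$ of size $2\kappa$, plus integrals of products of densities. All of these are continuous in $z$, because the jump term is multiplied by $\prod_{\ell\ne i}F_\ell(z_i)$, which is continuous. This yields $C^2$.

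\emph{Part (ii).} For the mixed derivatives,
\[
(h_m)_{ij}=-\tfrac14\int e^{\kappa t}f_if_j\prod_{\ell\ne i,j}F_\ell\,dt<0,
\]
where the sign comes from $\partial_{z_j}F_j=-f_j$. For \eqref{eq:diag}, write $\partial_i f_i=2\kappa f_i-2\kappa e^{2\kappa z_i}\cdots\delta_{t=z_i}$, that is, an $\mathcal L$-type identity for the single exponential. Combining $\tfrac12\partial_{ii}-\kappa\partial_i$ then cancels the smooth parts and leaves only the delta contribution at $t=z_i$:
\[
-\tfrac{\kappa}{2}e^{\kappa z_i}\prod_{\ell\ne i}F_\ell(z_i).
\]
This is nonzero only if $F_\ell(z_i)>0$ for all $\ell\ne i$, that is, only if $z_i>z_\ell$ for every $\ell\neq i$. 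In that case $F_\ell(M)=1-\rho_\ell(M)$. I expect bookkeeping the delta term here, including the factor $\tfrac12\cdot 2\kappa$ and the sign, to be the main technical care point.

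\emph{Part (iii).} Expand
\[
D_J^2-2\kappa D_J=\sum_{i\in J}2\mathcal L_i+\sum_{i\ne j\in J}\partial_{ij}.
\]
By (ii), every term is $\leq0$. The cross terms are strictly negative when $|J|\geq2$. When $J=\{i\}$, the expression is strict exactly when $z_i$ is the unique maximum, and equal to zero otherwise.
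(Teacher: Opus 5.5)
Your proposal is correct and follows essentially the paper's route: the basic properties come directly from the representation \eqref{eq:profile} (monotone or dominated convergence both give \eqref{eq:onecoord}), and \eqref{eq:first}, \eqref{eq:cross}, \eqref{eq:diag}, and then (iii), come from differentiating the exponential tail integral and splitting $D_J^2-2\kappa D_J$ into diagonal and cross terms. The only difference is bookkeeping: you integrate over all of $\R$ against the truncated distribution functions $(1-\rho_i)_+$, so the paper's moving-endpoint terms at $t=M$ become your boundary term at $t=z_i$. That term's value $-\tfrac\kappa2e^{\kappa z_i}\prod_{\ell\ne i}(1-\rho_\ell(z_i))_+$ is $2\mathcal L_ih_m$, not $\mathcal L_ih_m$ as your wording says.
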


\begin{proof}
Symmetry, monotonicity and the homogeneity in \eqref{eq:boundsprofile} are immediate from \eqref{eq:profile}, since adding $a$ to every $z_i$ adds $a$ to $\max_i(z_i+E_i)$. If $z_{i^*}=M$, then $\max_i(z_i+E_i)\geq M+E_{i^*}$ and $\E e^{\kappa E_{i^*}}=2\kappa/(2\kappa-\kappa)=2$, so $h_m(z)\geq e^{\kappa M}/(2\kappa)=c\,e^{\kappa M}$. Monotonicity and homogeneity give $h_m(z)\leq h_m(M\one)=h_m(0)e^{\kappa M}$. For \eqref{eq:onecoord}, $\max_i(z_i+E_i)$ decreases to $\max_{i<m}(z_i+E_i)$ as $z_m\to-\infty$, so monotone convergence gives the first identity, and $h_1(z_1)=\tfrac1{4\kappa}e^{\kappa z_1}\E e^{\kappa E_1}=c\,e^{\kappa z_1}$.

We next compute the derivatives of $h_m$. Differentiate \eqref{eq:integral} in $z_i$. The endpoint $M$ depends on $z_i$ only when $z_i=M$, and then $\partial_{z_i}M=1$; since $\rho_i(M)=1$, the bracket in \eqref{eq:integral} equals $1$ at $t=M$, and the endpoint contribution $-e^{\kappa M}/4$ cancels the derivative $e^{\kappa M}/4$ of the first term. Under the integral, $\partial_{z_i}\rho_i=2\kappa\rho_i$, so
\begin{equation}\label{eq:first}
 (h_m)_i=\frac\kappa2\int_M^\infty e^{\kappa t}\rho_i(t)
                   \prod_{\ell\ne i}(1-\rho_\ell(t))\dd.
\end{equation}
Differentiating \eqref{eq:first} in $z_j$ with $j\ne i$ produces no endpoint term: at $t=M$ the surviving product contains the factor $1-\rho_j(M)=0$ whenever $j$ attains the maximum. Hence
\begin{equation}\label{eq:cross}
 (h_m)_{ij}=-\kappa^2\int_M^\infty e^{\kappa t}\rho_i(t)\rho_j(t)
                   \prod_{\ell\ne i,j}(1-\rho_\ell(t))\dd<0
                   \qquad(i\ne j).
\end{equation}
Differentiating \eqref{eq:first} in $z_i$ itself, the integral term reproduces $2\kappa(h_m)_i$, and the endpoint term survives exactly when $z_i=M$ is the unique maximum. Since $2\mathcal L_ih_m=(h_m)_{ii}-2\kappa(h_m)_i$, this gives \eqref{eq:diag}. The two branches agree at a tie, because the displayed product then contains a vanishing factor. The right sides of \eqref{eq:first}, \eqref{eq:cross} and \eqref{eq:diag} are continuous in $z$ on all of $\R^m$, the integrands being continuous and dominated by $e^{\kappa t}\rho_i(t)$. Consequently $h_m\in C^2(\R^m)$, with no appeal to a differentiated ordering map: the function \eqref{eq:profile} is intrinsically symmetric, and \eqref{eq:sum} is merely its expression on one ordered chamber.

Finally, \eqref{eq:cross} and \eqref{eq:diag} give
\[
 D_J^2h_m-2\kappa D_Jh_m
 =\sum_{i\in J}\bigl((h_m)_{ii}-2\kappa(h_m)_i\bigr)
       +2\sum_{i<j,\ i,j\in J}(h_m)_{ij}.
\]
Each term of the first sum is nonpositive by \eqref{eq:diag}, and negative only when $z_i$ is the unique maximum, while each term of the second sum is negative by \eqref{eq:cross}. This proves \eqref{eq:deficit} together with the cases of strict inequality and equality.
\end{proof}

\begin{remark}\label{rem:affine}
The second branch of \eqref{eq:diag} can also be seen directly. For $i$ not the unique maximizer, the integral representation \eqref{eq:integral} is an affine function of $e^{2\kappa z_i}$ with coefficients independent of $z_i$, and $\mathcal L_i$ annihilates both $1$ and $e^{2\kappa z_i}$ because $\tfrac12(2\kappa)^2-\kappa(2\kappa)=0$. We use this form again in the proof of Lemma~\ref{lem:minimal}.
\end{remark}

\begin{remark}[The full follower collision]\label{rem:collision}
Let $m\geq2$. At the full follower collision $z=0$ all coordinates are tied. Differentiating $h_m(z+a\one)=e^{\kappa a}h_m(z)$ in $a$ at $a=0$ gives $\sum_i(h_m)_i=\kappa h_m$, and differentiating once more in $z_i$ gives $\sum_j(h_m)_{ij}=\kappa(h_m)_i$. Symmetry makes all $(h_m)_i(0)$ equal and all off-diagonal $(h_m)_{ij}(0)$ equal, and the second branch of \eqref{eq:diag} gives $(h_m)_{ii}(0)=2\kappa(h_m)_i(0)$. Solving the three relations in turn,
\[
 (h_m)_i(0)=\frac{\kappa h_m(0)}m,\qquad
 (h_m)_{ii}(0)=\frac{2\kappa^2h_m(0)}m,\qquad
 (h_m)_{ij}(0)=-\frac{\kappa^2h_m(0)}{m(m-1)}\quad(i\ne j).
\]
Thus, for $k=|J|$,
\[
 2\kappa D_Jh_m(0)-D_J^2h_m(0)
 =\frac{2\kappa^2h_m(0)k}{m}-\frac{2\kappa^2h_m(0)k}{m}
   +\frac{\kappa^2h_m(0)k(k-1)}{m(m-1)}
 =\frac{\kappa^2h_m(0)k(k-1)}{m(m-1)}.
\]
For six experts and two selected followers, $m=5$ and $k=2$, this is $2\cdot\tfrac{64}{63\sqrt2}\cdot\tfrac2{20}=64/(315\sqrt2)$.
\end{remark}

\subsection{Upper bound: a global supersolution}\label{sec:barrier}

On the set with a unique leading coordinate, define
\begin{equation}\label{eq:barrier}
 \bar u_n(x)=x_{(1)}+e^{-\kappa x_{(1)}}
             h_{n-1}(x_{(2)},\ldots,x_{(n)}),
\end{equation}
and use the same formula at ties. Here, as in Lemma~\ref{lem:regularity}, $x_{(1)}\geq\cdots\geq x_{(n)}$ is the decreasing rearrangement of $x$. Since $h_{n-1}$ is symmetric, this is a well defined continuous, permutation invariant, translation equivariant function, and \eqref{eq:boundsprofile} bounds its correction: $0<\bar u_n-\phi_n\leq h_{n-1}(0)e^{\kappa(x_{(2)}-x_{(1)})}\leq h_{n-1}(0)$. As we establish below, $\bar u_n$ is a viscosity supersolution of \eqref{eq:pde}, and can thus be used to bound the value function.

\begin{proposition}\label{prop:barrier}
For every $n\geq2$, the function $\bar u_n$ is a viscosity supersolution of \eqref{eq:pde} on $\R^n$, and it solves \eqref{eq:pde} classically on the open set where the largest coordinate is unique. For $n=2,3$, $\bar u_n=u_n$. Consequently, $u_n\leq\bar u_n$ on $\R^n$, that is,
\begin{equation}\label{eq:upper}
 u_n(s,z)-s\leq e^{-\kappa s}h_{n-1}(z)\qquad(s\geq\max z_i).
\end{equation}
\end{proposition}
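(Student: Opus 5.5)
We would prove the proposition by treating the open set $\mathcal U$ where the largest coordinate is unique and the tie set $\R^n\setminus\mathcal U$ separately, and then invoke Lemma~\ref{lem:comparison}. For the comparison step, note that $\bar u_n$ has linear growth by the bound $0<\bar u_n-\phi_n\leq h_{n-1}(0)$ recorded after \eqref{eq:barrier}, so $u_n\leq\bar u_n$ follows once $\bar u_n$ is shown to be a viscosity supersolution. Evaluated at $x=(s,z)$ with $s\geq\max_iz_i$, this inequality is \eqref{eq:upper}.

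\emph{Step 1: the unique-leader set.} By permutation invariance it suffices to work where $x_1=s$ is the unique maximum. There $\bar u_n=s+e^{-\kappa s}h_{n-1}(z)$, which is $C^2$ by Lemma~\ref{lem:profile}. Translation equivariance of $\bar u_n$ gives $D^2\bar u_n\one=0$ on $\mathcal U$. Hence $D_{\v}^2\bar u_n=D_{\one-\v}^2\bar u_n$, and we may take $v_1=0$, i.e. $\v=(0,\one_J)$ for a set $J$ of follower indices. Then $D_{\v}^2\bar u_n=e^{-\kappa s}D_J^2h$, and the claim $\bar u_n-\tfrac12\max_{\v}D_{\v}^2\bar u_n=s$ reduces to the identity
\[
 \max_J D_J^2h_{n-1}=2h_{n-1}.
\]
Let $F$ denote the full set of follower indices. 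Homogeneity \eqref{eq:boundsprofile} gives $D_Fh=\kappa h$, hence $D_F^2h=\kappa^2h=2h$ and $D_FD_Jh=\kappa D_Jh$. So $J=F$, which corresponds to the direction $e_1$ (equivalently $\one-e_1$), attains the value $2h$. For a general $J$, let $K=F\setminus J$. Writing $D_F=D_J+D_K$ gives
\[
 D_J^2h=\kappa D_Jh-D_JD_Kh,\qquad D_K^2h=\kappa D_Kh-D_JD_Kh.
\]
Using these together with $\kappa D_Jh+\kappa D_Kh=\kappa^2h=2h$, the inequality $D_J^2h\leq 2h$ becomes $-D_JD_Kh\leq\kappa D_Kh$. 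This is equivalent to $D_K^2h\leq2\kappa D_Kh$, which is \eqref{eq:deficit}. So $\bar u_n$ solves \eqref{eq:pde} classically on $\mathcal U$.

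\emph{Step 2: ties at the top.} For each $i$ define the globally $C^2$ function
\[
 F_i(x)=x_i+e^{-\kappa x_i}h_{n-1}(x_{-i}),
\]
so that $\bar u_n=F_i$ wherever $x_i$ is a maximal coordinate. The key claim is a local minimum structure: if $x_1$ is the strict leader and $x_i$ is close to it, then $F_i\geq F_1$. Granted this, near a tie point $x_0$ with tied leaders $I$ we have $\bar u_n=\min_{i\in I}F_i$. Now suppose $\psi$ touches $\bar u_n$ from below at $x_0$. Then $\psi$ also touches each $F_i$, $i\in I$, from below at $x_0$, so $D_{\v}^2\psi(x_0)\leq D_{\v}^2F_i(x_0)$. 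Moreover, by the computation of Step 1 (which uses only homogeneity and \eqref{eq:deficit}, both valid at all arguments), $F_i$ satisfies $F_i-\tfrac12\max_{\v}D_{\v}^2F_i=x_i$. At $x_0$ we have $x_i=\phi_n(x_0)$, and the supersolution inequality follows.

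The inequality $F_i\geq F_1$ is the step we expect to be the main obstacle. For $n=2$ it reads $t-2c\sinh(\kappa t)\leq0$ with $t=x_1-x_2>0$, which holds since $2c\kappa=1$. In general we would first reduce, using homogeneity, to the case where only $x_1$ and $x_i$ are varied with the other coordinates frozen. We would then try to prove the inequality from the representation \eqref{eq:integral}, or from \eqref{eq:profile} by conditioning on the remaining exponentials, so that the difference becomes a one-dimensional function of the gap. Our plan is to show that this function vanishes at the tie and has the correct sign, again exploiting $2\kappa c=1$ and the cancellation $\mathcal L_i(e^{2\kappa z_i})=0$ from Remark~\ref{rem:affine}.

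\emph{Step 3: $n=2,3$.} For these $n$ we would show that $\bar u_n$ is also a subsolution, so that Lemma~\ref{lem:comparison}, applied in both directions, gives $\bar u_n=u_n$. For $n=2$, $\bar u_2=x_{(1)}+ce^{-\kappa|x_1-x_2|}$ is exactly $u_2$. For $n=3$ we would use the explicit formula \eqref{eq:sum} for $h_2$ to check that $F_1$ and $F_2$ agree to second order on $\{x_1=x_2\}$. Then $\bar u_3\in C^2$, and the classical equality on the dense set $\mathcal U$ extends to all of $\R^3$ by continuity. This also explains why the argument cannot be pushed further: for $n\geq4$ the same $C^2$ matching must fail, since otherwise $\bar u_n=u_n$, contradicting the strict inequality $u_n(0)\leq B_n<h_{n-1}(0)$ quoted in Remark~\ref{rem:origin}.
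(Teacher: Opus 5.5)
Your Step 1 is correct: it is the paper's residual computation with the complementary normalization $v_1=0$. Step 3 and the final comparison are also fine. The architecture of Step 2 is sound. Each branch $F_i(x)=x_i+e^{-\kappa x_i}h_{n-1}(x_{-i})$ is $C^2$ on $\R^n$ and satisfies $F_i-\tfrac12\max_{\v}D_{\v}^2F_i=x_i$ everywhere. So a local representation $\bar u_n=\min_{i\in I}F_i$ near a top tie would give the supersolution inequality there.

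\textbf{The gap.} The claim that $F_i\geq F_j$ near a tie when $x_j$ leads is the entire content of the proposition at the top collisions, and you leave it unproved. Fix the leader $x_1=a$ and the other coordinates $y$, and vary $x_i=b\leq a$. The two branches agree at $b=a$, and
\[
 \frac{d}{db}(F_i-F_1)\Big|_{b=a}=1-e^{-\kappa M}\bigl(\kappa h_{n-1}+(h_{n-1})_1\bigr),\qquad M=a,
\]
where $(h_{n-1})_1$ is the derivative in the tied follower slot. Your claim therefore requires $e^{-\kappa M}(\kappa h_{n-1}+(h_{n-1})_1)\geq1$ at every top tie, and you need this strictly for $n\geq4$. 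Strictness is what turns a first-order statement at $x_0$ into $F_j<F_i$ on $\{x_j>x_i\}$ in a full neighborhood, also at multiple ties. This uses the factorization $F_j-F_i=(x_j-x_i)G_{ji}$ with $G_{ji}$ continuous. This sign is exactly what the paper proves.

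\textbf{Your sketched route fails as stated.} Condition on the remaining exponentials, that is, on $V=\max_\ell(y_\ell+E_\ell)$. Let $E$ be exponential with rate $2\kappa$ and $\delta=a-b$. Then $F_1-F_i=\delta+\E\Lambda(V)$, where
\[
4\kappa\Lambda(v)=e^{-\kappa a}\E e^{\kappa\max(b+E,v)}-e^{-\kappa b}\E e^{\kappa\max(a+E,v)}.
\]
For $v\geq b$ one has $\E e^{\kappa\max(b+E,v)}=e^{\kappa v}+e^{\kappa(2b-v)}$. This gives $\delta+\Lambda(a)=\tfrac\kappa4\delta^2+O(\delta^3)>0$, so conditionally on $V$ near $a$ the inequality has the wrong sign. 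The correct sign appears only after integrating against the law of $V$, and it depends on the product structure of that law.

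\textbf{How the paper gets the sign.} It substitutes $\rho=e^{-2\kappa(t-M)}$ and derives the identity \eqref{eq:cusp}. It then applies Chebyshev's integral inequality, pairing the kernel $\rho^{-1/2}(3\rho-1)$ (mean zero, one sign change) with the nondecreasing chord slope $(P(\rho)-1)/\rho$ of the convex polynomial $P$. Its conclusion is slightly different from yours. The strict sign produces a downward cusp $-\gamma_*|\zeta|$ along $e_1-e_2$, so no test function touches from below and the condition is vacuous. Your min-structure would serve equally well once that sign is in hand. Without this computation, or an equivalent one, Step 2, and hence $u_n\leq\bar u_n$, is not established.
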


\begin{proof}
We first compute the residual where the leader is unique. Suppose $x_1$ is the unique largest coordinate and write $z=(x_2,\ldots,x_n)$, so that $\bar u_n=x_1+e^{-\kappa x_1}h_{n-1}(z)$ near $x$. Complement a binary direction if necessary so that it selects the leader, and let $J$ index the selected followers. From
\[
 \partial_{x_1}\bar u_n=1-\kappa e^{-\kappa x_1}h_{n-1},\qquad
 \partial_{x_1}^2\bar u_n=\kappa^2e^{-\kappa x_1}h_{n-1},\qquad
 \partial_{x_1}\partial_{z_i}\bar u_n=-\kappa e^{-\kappa x_1}(h_{n-1})_i,
\]
and $\partial_{z_i}\partial_{z_j}\bar u_n=e^{-\kappa x_1}(h_{n-1})_{ij}$, we get
\[
 D_{\v}^2\bar u_n=e^{-\kappa x_1}
   \bigl[\kappa^2h_{n-1}-2\kappa D_Jh_{n-1}+D_J^2h_{n-1}\bigr],
\]
so that, using $\phi_n=x_1$ and $\kappa^2=2$,
\begin{equation}\label{eq:barrierresidual}
\begin{aligned}
 \bar u_n-\frac12D_{\v}^2\bar u_n-\phi_n
 &=e^{-\kappa x_1}\Bigl[\Bigl(1-\frac{\kappa^2}2\Bigr)h_{n-1}
     +\kappa D_Jh_{n-1}-\frac12D_J^2h_{n-1}\Bigr]\\
 &=\frac12e^{-\kappa x_1}
       (2\kappa D_Jh_{n-1}-D_J^2h_{n-1})\geq0
\end{aligned}
\end{equation}
by \eqref{eq:deficit}. The zeroth order term drops out exactly because $\kappa^2=2$. There is equality for the leader alone, $J=\varnothing$, so
\[
 \bar u_n-\frac12\max_{\v}D_{\v}^2\bar u_n-\phi_n
  =\min_{\v}\bigl(\bar u_n-\frac12D_{\v}^2\bar u_n-\phi_n\bigr)=0 .
\]
Hence $\bar u_n$ solves the Bellman PDE classically on the open set with a unique leader, including follower collisions, where $h_{n-1}\in C^2$.

It remains to check the top collisions. Suppose the top two coordinates are equal, so that the leading follower satisfies $z_1=M=x_{(1)}$. Set
\[
 \theta_j=e^{2\kappa(z_j-M)}\in(0,1]\ (j=1,\ldots,n-1),\qquad
 \theta_1=1,\qquad
 P(\rho)=\prod_{j=2}^{n-1}(1-\theta_j\rho),
\]
and substitute $\rho=e^{-2\kappa(t-M)}$ in \eqref{eq:integral} and \eqref{eq:first}, so that $\rho_j(t)=\theta_j\rho$, $e^{\kappa t}=e^{\kappa M}\rho^{-1/2}$ and $\dd=-d\rho/(2\kappa\rho)$. Since $\prod_{j=1}^{n-1}(1-\rho_j)=(1-\rho)P(\rho)$,
\[
 \kappa h_{n-1}=e^{\kappa M}\left(\frac14
   +\frac18\int_0^1\rho^{-3/2}\bigl[1-(1-\rho)P(\rho)\bigr]d\rho\right),
 \quad
 (h_{n-1})_1=\frac{e^{\kappa M}}4\int_0^1\rho^{-1/2}P(\rho)\,d\rho.
\]
Both integrals converge, since $1-(1-\rho)P(\rho)=O(\rho)$ at the origin. Writing $\tfrac14\rho^{-1/2}P=\tfrac18\rho^{-3/2}\cdot2\rho P$ and using
\[
 1-(1-\rho)P+2\rho P=1+(3\rho-1)P=(3\rho-1)(P-1)+3\rho,
 \qquad \frac18\int_0^13\rho^{-1/2}d\rho=\frac34,
\]
the constants $\tfrac14+\tfrac34-1$ cancel and
\begin{equation}\label{eq:cusp}
 e^{-\kappa M}(\kappa h_{n-1}+(h_{n-1})_1)-1
 =\frac18\int_0^1\rho^{-1/2}(3\rho-1)\,\frac{P(\rho)-1}{\rho}\,d\rho .
\end{equation}
The sign of the right side follows from Chebyshev's integral inequality. The kernel $k(\rho)=\rho^{-1/2}(3\rho-1)$ changes sign once, at $\rho=\tfrac13$, and $\int_0^1k\,d\rho=2-2=0$. The function $g(\rho)=(P(\rho)-1)/\rho$ is the slope of the chord of $P$ from $0$ to $\rho$, and it is nondecreasing because $P$ is convex on $[0,1]$. Indeed,
\[
 P''(\rho)=\sum_{\substack{2\leq j,k\leq n-1\\ j\ne k}}\theta_j\theta_k
    \prod_{\substack{2\leq\ell\leq n-1\\ \ell\ne j,k}}(1-\theta_\ell\rho)\geq0
    \quad\text{on }[0,1],
\]
since every $\theta_\ell\in(0,1]$ makes each factor nonnegative for $\rho\leq1$. Hence
\[
 \int_0^1k\,g\,d\rho=\int_0^1k(\rho)\bigl(g(\rho)-g(\tfrac13)\bigr)\,d\rho\geq0,
\]
because both factors have the sign of $\rho-\tfrac13$. For $n\geq4$ each product in $P''$ is positive on $(0,1)$, so $g$ is strictly increasing and the inequality is strict. For $n\leq3$ the polynomial $P$ is affine, so $g$ is constant and the left side of \eqref{eq:cusp} vanishes.

The derivative of $\bar u_n$ into the sector along the difference of the two top coordinates is
\[
 (\bar u_n)_1-(\bar u_n)_2
 =1-e^{-\kappa M}(\kappa h_{n-1}+(h_{n-1})_1)<0
 \qquad(n\geq4).
\]

Along the line $\zeta\mapsto x+\zeta(e_1-e_2)$ the symmetric extension is even in $\zeta$, so it has the form $\bar u_n(x)-\gamma_*|\zeta|+O(\zeta^2)$ with $\gamma_*>0$: a strict downward cusp. This remains true if more than two top coordinates are tied, since the tied slot still has $\theta_2=1$ and $P''>0$ for $n\geq4$. A differentiable $\psi$ with $\bar u_n-\psi$ minimal at $x$ would satisfy $\psi'(0^+)\leq-\gamma_*$ and $\psi'(0^-)\geq\gamma_*$ along that line, which is impossible. Hence no test function touches $\bar u_n$ from below at a top collision, the viscosity supersolution condition there is vacuous, and together with \eqref{eq:barrierresidual} this proves that $\bar u_n$ is a global viscosity supersolution for $n\geq4$.

For $n=2,3$ the left side of \eqref{eq:cusp} vanishes and the barrier is the solution itself. Indeed \eqref{eq:sum} gives $h_1(z_1)=e^{\kappa z_1}/(2\kappa)$ and $h_2(z_1,z_2)=e^{\kappa z_1}/(2\kappa)+e^{2\kappa z_2-\kappa z_1}/(6\kappa)$, so that
\[
 \bar u_2=x_1+\frac{e^{\kappa(x_2-x_1)}}{2\kappa},\qquad
 \bar u_3=x_1+\frac{e^{\kappa(x_2-x_1)}}{2\kappa}
        +\frac{e^{\kappa(2x_3-x_2-x_1)}}{6\kappa},
\]
which are exactly the two and three expert solutions~\cite[(3.2),(3.3)]{calder2026explicit}; direct differentiation verifies the supersolution assertion, with $C^2$ matching across the top faces. In particular, the normalization in \eqref{eq:profile}, for which $h_1(z_1)=c\,e^{\kappa z_1}$ with $c=1/(2\kappa)$, is the one forced by the two-expert solution.

Finally, $u_n$ and $\bar u_n$ have at most linear growth, so Lemma~\ref{lem:comparison} gives $u_n\leq\bar u_n$, which is \eqref{eq:upper}.
\end{proof}

For $n\geq4$, $\bar u_n$ is only an upper barrier globally: its downward cusps are incompatible with convexity of the true value. Its usefulness is that it has the correct leading tail.

We can now complete our regularity proof. 
\begin{proof}[Proof of Lemma~\ref{lem:regularity}]
By Proposition~\ref{prop:hessian}, $u_n\in C^{1,1}_{\mathrm{loc}}(\R^n)$ and $0\preceq D^2u_n\preceq8(u_n-\phi_n)I$ almost everywhere. Given $x$, we apply \eqref{eq:upper} with $s=x_{(1)}$ and $z=(x_{(2)},\dots,x_{(n)})$, which is legitimate by permutation invariance, and then \eqref{eq:boundsprofile} with $\max_iz_i=x_{(2)}$, to obtain
\[
 u_n(x)-\phi_n(x)\leq e^{-\kappa x_{(1)}}h_{n-1}(x_{(2)},\dots,x_{(n)})\leq h_{n-1}(0)\,e^{-\kappa(x_{(1)}-x_{(2)})}.
\]
This is the third inequality of the lemma. Since $x_{(1)}\geq x_{(2)}$, it also bounds the Hessian of $u_n$ by $8h_{n-1}(0)$ almost everywhere, so $\nabla u_n$ is Lipschitz on $\R^n$ with constant $8h_{n-1}(0)$ and $u_n\in C^{1,1}(\R^n)$.
\end{proof}

\subsection{Lower bound: existence and minimality of the limit}\label{sec:exist}

We show that the rescaled correction $e^{\kappa s}(u_n(s,z)-s)$ converges to a limit $h_\star\le h_{n-1}$ that is concave in each scale variable $e^{2\kappa z_i}$, which is the form the coordinatewise inequalities $\mathcal L_ih_\star\le0$ take here (Lemmas~\ref{lem:limit} and~\ref{lem:limitineq}). We then show that every function with these properties of $h_\star$ dominates $h_{n-1}$ (Lemma~\ref{lem:minimal}). Together these prove Theorem~\ref{thm:profile}. Apart from the bounds \eqref{eq:upper} and \eqref{eq:pairbounds}, Lemmas~\ref{lem:limit} and~\ref{lem:limitineq} use \eqref{eq:pde} only through the one-dimensional comparison of Lemma~\ref{lem:lines}(ii) along the binary directions $e_1$ and $e_1+e_{i+1}$, and Lemma~\ref{lem:minimal} is a statement about functions.

\begin{lemma}\label{lem:limit}
For each $z\in\R^{n-1}$, the function $s\mapsto e^{\kappa s}(u_n(s,z)-s)$ is nondecreasing on $(\max_iz_i,\infty)$. As $s\to\infty$ it converges, locally uniformly in $z$, to a continuous, convex and permutation invariant function $h_\star$ with $h_\star(z+a\one)=e^{\kappa a}h_\star(z)$ and
\begin{equation}\label{eq:hbounds}
 c e^{\kappa\max z_i}\leq h_\star(z)\leq h_{n-1}(z).
\end{equation}
\end{lemma}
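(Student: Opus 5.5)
The plan is to get monotonicity from one-dimensional comparison along the leader direction $e_1$, and then read off everything else from monotone convergence together with the bounds already proved.

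\emph{Monotonicity.} Fix $z$, put $M=\max_iz_i$, and let $g(s)=u_n(s,z)$ and $w(s)=g(s)-s$. On the half-line $s>M$ the payoff is $\phi_n(s,z)=s$, which is affine in $s$. The function $u_n$ is a viscosity supersolution of $u_n-\tfrac12D_{e_1}^2u_n\geq\phi_n$, so Lemma~\ref{lem:lines}(i) shows that $w$ is a viscosity supersolution of $w-\tfrac12w''\geq0$ on $(M,\infty)$. The corresponding subsolution inequality is not available, since $e_1$ need not be optimal.

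Take $M<s_0<s_1$. I will compare $g$ with the $C^2$ function
\[
H(s)=s+w(s_0)e^{-\kappa(s-s_0)},
\]
which solves $H-\tfrac12H''=s$. At the left endpoint we have $H(s_0)=g(s_0)$. At the right endpoint I need $H(s_1)\leq g(s_1)$, and this is exactly what needs care. The trick is to use a family of comparison functions
\[
H_\eps(s)=s+w(s_0)e^{-\kappa(s-s_0)}-\eps e^{\kappa(s-s_0)},
\]
which are also solutions of $H-\tfrac12H''=s$. The bound $0\leq w\leq(n-1)c$ from \eqref{eq:pairbounds} keeps $w$ bounded, while $-\eps e^{\kappa(s-s_0)}\to-\infty$. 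So for any fixed $\eps>0$ we get $H_\eps(s_1)\leq g(s_1)$ once $s_1$ is large enough. Lemma~\ref{lem:lines}(ii) then gives $H_\eps\leq g$ on $[s_0,s_1]$. Letting first $s_1\to\infty$ and then $\eps\downarrow0$ yields $w(s)\geq w(s_0)e^{-\kappa(s-s_0)}$ for all $s\geq s_0$. This says precisely that $e^{\kappa s}w(s)$ is nondecreasing.

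\emph{Limit and its properties.} By \eqref{eq:upper}, $e^{\kappa s}w(s)\leq h_{n-1}(z)$, so the monotone limit $h_\star(z)$ exists and satisfies the upper bound in \eqref{eq:hbounds}. The lower bound comes from \eqref{eq:pairbounds}, which gives $e^{\kappa s}w\geq ce^{\kappa M}$.

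Permutation invariance of $h_\star$ is inherited from $u_n$. For homogeneity, translation equivariance gives
\[
u_n(s,z+a\one)-s=u_n(s-a,z)-(s-a),
\]
so $e^{\kappa s}\bigl(u_n(s,z+a\one)-s\bigr)=e^{\kappa a}e^{\kappa(s-a)}\bigl(u_n(s-a,z)-(s-a)\bigr)$. Letting $s\to\infty$ gives $h_\star(z+a\one)=e^{\kappa a}h_\star(z)$.

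Convexity holds because, for each fixed $s$, $z\mapsto e^{\kappa s}(u_n(s,z)-s)$ is convex, since $u_n$ is convex, and pointwise limits of convex functions are convex.

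\emph{Continuity and uniform convergence.} This is the step I expect to need the most care. Since $h_\star$ is a finite convex function on $\R^{n-1}$, it is continuous. The approximants are continuous and increase monotonically in $s$ to this continuous limit, so Dini's theorem gives locally uniform convergence on compact sets.

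To apply Dini on a compact set $K$, the monotonicity must hold throughout $K$. This is fine once $s$ exceeds $\max_{z\in K}\max_iz_i$, and in any case we only need $s$ large. Alternatively, the uniform Lipschitz bound on $\nabla u_n$ from Lemma~\ref{lem:regularity}, scaled by $e^{\kappa s}$ and combined with $u_n-\phi_n\lesssim e^{-\kappa s}$, gives equicontinuity directly.
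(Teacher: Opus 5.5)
Your proof is correct and follows the paper's argument. The core is the same one-dimensional comparison along $e_1$ via Lemma~\ref{lem:lines}(ii), with an $\eps e^{\kappa t}$ perturbation that handles the far endpoint using $u_n(s,z)-s\geq0$, followed by monotone convergence under \eqref{eq:upper} and \eqref{eq:pairbounds}, with homogeneity and convexity inherited from $u_n$. The only difference is that you get local uniformity from Dini's theorem (a monotone family with a continuous convex limit), whereas the paper uses local equicontinuity of the locally bounded convex functions. Both work, and your closing alternative via Lemma~\ref{lem:regularity} is not needed.
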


\begin{proof}
Fix $z$, let $M=\max_iz_i$, and write $g(s)=u_n(s,z)$ and $\chi(s)=g(s)-s$ for $s>M$. By \eqref{eq:pairbounds}, $\chi\geq0$. Along the line $s\mapsto(s,z)$, which has the binary direction $e_1$, the payoff is $\phi_n(s,z)=s$ for $s\geq M$, so Lemma~\ref{lem:lines}(ii) applies to $g$ with $\tilde f(s)=s$ on every interval $[s_1,L]\subset(M,\infty)$. Let $M<s_1<s$ and $\eps>0$, and put
\[
 G_\eps(t)=\chi(s_1)e^{-\kappa(t-s_1)}-\eps\bigl(e^{\kappa(t-s_1)}-e^{-\kappa(t-s_1)}\bigr).
\]
Since $\kappa^2=2$, the function $H(t)=t+G_\eps(t)$ satisfies $H-\tfrac12H''=t$, and $H(s_1)=g(s_1)$, while $H(L)<L\leq g(L)$ for $L$ large because $\chi\geq0$. Hence $g\geq H$ on $[s_1,L]$, that is, $\chi\geq G_\eps$ there, and letting $\eps\downarrow0$ gives $\chi(s)\geq\chi(s_1)e^{-\kappa(s-s_1)}$. Equivalently, $e^{\kappa s}\chi(s)\geq e^{\kappa s_1}\chi(s_1)$, which is the claimed monotonicity.

The upper bound \eqref{eq:upper} then proves existence of the limit $h_\star(z)$, and together with the lower bound in \eqref{eq:pairbounds} it gives \eqref{eq:hbounds}. For fixed $s$, $e^{\kappa s}(u_n(s,z)-s)$ is convex in $z$, being the restriction of a convex function to an affine slice. The local bounds imply local equicontinuity of these convex functions, so the convergence is locally uniform and $h_\star$ is continuous, convex, and permutation invariant. Translation equivariance gives $h_\star(z+a\one)=e^{\kappa a}h_\star(z)$.
\end{proof}

For $z\in\R^{n-1}$, an index $i$ and $t\in\R$, write $z^{(t)}$ for $z$ with $z_i$ replaced by $t$, and $V_s(z)=e^{\kappa s}(u_n(s,z)-s)$.

\begin{lemma}\label{lem:limitineq}
The limit $h_\star$ of Lemma~\ref{lem:limit} is concave in each scale variable $\xi_i=e^{2\kappa z_i}$: for every $i$, $z$ and $a<t<b$,
\begin{equation}\label{eq:limitineq}
 h_\star(z^{(t)})\geq\frac{(e^{2\kappa b}-e^{2\kappa t})\,h_\star(z^{(a)})+(e^{2\kappa t}-e^{2\kappa a})\,h_\star(z^{(b)})}{e^{2\kappa b}-e^{2\kappa a}} .
\end{equation}
Since $\mathcal L_i=\tfrac12\partial_{ii}-\kappa\partial_i=2\kappa^2\xi_i^2\partial_{\xi_i}^2$, this is the inequality $\mathcal L_ih_\star\leq0$.
\end{lemma}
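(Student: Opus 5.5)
The plan is to prove the inequality at finite $s$ as an approximate one-dimensional comparison along the binary direction $e_1+e_{i+1}$, and then let $s\to\infty$.

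\emph{Setup.} Fix $i$, $z$ and $a<t<b$. Take $s$ large, so that the leader $s$ exceeds every follower coordinate along the segment considered. Along the line $\tau\mapsto(s+\tau,z^{(z_i+\tau)})$, which has direction $e_1+e_{i+1}$, the payoff is $\phi_n=s+\tau$, which is affine in $\tau$. So Lemma~\ref{lem:lines}(ii) applies to $u_n$ there. This line moves the leader as well as the follower, and the algebra is cleanest after rescaling.

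\emph{The one-dimensional comparison.} For simplicity, reparametrize so that the follower coordinate runs from $a$ to $b$ while the leader runs from $s+a-z_i$ to $s+b-z_i$ (with $z_i$ taken as the base point). Write $g(\tau)=u_n(\cdot)-(\text{leader})$. Lemma~\ref{lem:lines}(iv)-type comparison gives
\[
g(\tau)\ge A e^{\kappa\tau}+Be^{-\kappa\tau},
\]
where $A$ and $B$ are fitted to the endpoint values. Multiplying by $e^{\kappa\,\text{leader}}=e^{\kappa(s'+\tau)}$, the comparison function becomes $Ae^{\kappa s'}e^{2\kappa\tau}+Be^{\kappa s'}$. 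This is affine in $\xi=e^{2\kappa\tau}$, which is exactly why the scale variable appears.

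\emph{Passing to the limit.} Along this line the rescaled quantity is
\[
e^{\kappa(s'+\tau)}\bigl(u_n-(s'+\tau)\bigr)=V_{s'+\tau}\bigl(z'+\tau\one \text{ shifted}\bigr).
\]
To keep the other followers fixed relative to the moving coordinate, use translation equivariance instead. Subtract $\tau\one$ from the whole point. This turns movement along $e_1+e_{i+1}$ into movement along $-\sum_{j\ne 1,i+1}e_j$, so the relevant function becomes $V_{s}(z^{(z_i+\tau)})$ up to the factor $e^{\kappa\tau}$ coming from homogeneity, and we are in the setting of Lemma~\ref{lem:limit}. The conclusion is then that $V_s(z^{(\cdot)})$ lies above the chord that is affine in $\xi_i=e^{2\kappa z_i}$ through its values at $a$ and $b$, up to errors that vanish as $s\to\infty$. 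Letting $s\to\infty$ and using the locally uniform convergence from Lemma~\ref{lem:limit} gives \eqref{eq:limitineq}. The identity $\mathcal L_i=2\kappa^2\xi_i^2\partial_{\xi_i}^2$ is a direct chain-rule computation.

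\emph{Main obstacle.} The hard step is the bookkeeping in the comparison. The leader is held at $s$ in $V_s$ while the line moves both coordinates, and the two moving coordinates must be related to the fixed base $s$ through translation equivariance and the identity $h_\star(z+a\one)=e^{\kappa a}h_\star(z)$. I expect that with the right parametrization the exact finite-$s$ inequality is
\[
V_s(z^{(t)})\ \ge\ \text{chord in }e^{2\kappa\cdot}\ \text{of } V_{s}(z^{(a)}),\ V_s(z^{(b)}),
\]
with the other followers shifted by constants that are absorbed in the limit. I would check this carefully, and then the limit is routine.
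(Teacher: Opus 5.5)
Your setup and one-dimensional comparison are the paper's, and you in fact already had the inequality you need. Along $r\mapsto(S+r,z^{(r)})$, $a\le r\le b$, Lemma~\ref{lem:lines}(ii) followed by multiplication by $e^{\kappa(S+r)}$ gives
\[
V_{S+t}(z^{(t)})\ \ge\ \frac{(e^{2\kappa b}-e^{2\kappa t})\,V_{S+a}(z^{(a)})+(e^{2\kappa t}-e^{2\kappa a})\,V_{S+b}(z^{(b)})}{e^{2\kappa b}-e^{2\kappa a}},
\]
with the leader at three different positions $S+a$, $S+t$, $S+b$. The gap is in what you do next. The ``main obstacle'' you describe does not exist. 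Lemma~\ref{lem:limit} gives $V_s(w)\to h_\star(w)$ for each fixed $w$ as $s\to\infty$, and all three leader positions tend to infinity with $S$. Letting $S\to\infty$ at $w=z^{(a)},z^{(t)},z^{(b)}$ therefore gives \eqref{eq:limitineq} at once. No common leader position and no translation bookkeeping are needed, and this is exactly how the paper concludes.

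The detour you propose in place of this step is wrong as written. Subtracting $\tau\one$ from $(s+\tau,z^{(z_i+\tau)})$ gives $(s,z^{(z_i+\tau)}-\tau\one)$. In that point follower $i$ is back at $z_i$ and every other follower is shifted by $-\tau$. So the correct identity is $V_{s+\tau}(w)=e^{\kappa\tau}V_s(w-\tau\one)$, not $e^{\kappa\tau}V_s(z^{(z_i+\tau)})$. The shift also differs at the three points, so it is not a constant that can be absorbed.

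More importantly, the finite-$s$ inequality you say you expect, with the same $V_s$ at all three points, is not delivered by the comparison. Holding the leader fixed and moving follower $i$ alone is the direction $e_{i+1}$. Along it the payoff is constant, and Lemma~\ref{lem:lines}(ii) only bounds $V_s(z^{(r)})$ below by a combination of $e^{\kappa r}$ and $e^{-\kappa r}$, not by a function affine in $e^{2\kappa r}$. The scale variable $e^{2\kappa z_i}$ appears only because the leader moves together with the follower. So the moving leader must be left in place and removed only in the limit, by applying the convergence of Lemma~\ref{lem:limit} at each of the three points separately.
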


\begin{proof}
Fix $i$, $z$ and $a<t<b$, and let $S$ be large. Along the segment $r\mapsto(S+r,z^{(r)})$, $a\leq r\leq b$, which moves the leader and follower $i$ together in the binary direction $e_1+e_{i+1}$, the leader stays ahead, so the payoff is $S+r$. Lemma~\ref{lem:lines}(ii) therefore applies to $g(r)=u_n(S+r,z^{(r)})$ with $\tilde f(r)=S+r$. Let $H(r)=S+r+Ae^{\kappa r}+Be^{-\kappa r}$ with $H(a)=g(a)$ and $H(b)=g(b)$. Since $H-\tfrac12H''=S+r$, we get $g\geq H$ on $[a,b]$, and multiplying by $e^{\kappa(S+r)}$,
\[
 V_{S+r}(z^{(r)})\geq e^{\kappa S}\bigl(Ae^{2\kappa r}+B\bigr)\qquad(a\leq r\leq b).
\]
The right side is affine in $e^{2\kappa r}$ and equals $V_{S+a}(z^{(a)})$ and $V_{S+b}(z^{(b)})$ at $r=a$ and $r=b$, so at $r=t$ this is \eqref{eq:limitineq} with $h_\star$ replaced by $V_{S+t}$, $V_{S+a}$ and $V_{S+b}$. Letting $S\to\infty$ and using Lemma~\ref{lem:limit} at the three points proves \eqref{eq:limitineq}.
\end{proof}

It remains to show that these properties and the lower bound in \eqref{eq:hbounds} force $h_\star\geq h_{n-1},$ which would prove Theorem~\ref{thm:profile}. This is the substantive identification step; neither homogeneity nor the limiting Bellman identity alone would determine the profile. The identification step is the following minimality lemma, which we apply with $m=n-1$ to $w=h_\star$. Its proof compares $w$ with $h_m$ at the vertices of the scale variables: $h_m$ is affine in each scale variable on the set where that coordinate is not the unique maximum (Remark~\ref{rem:affine}), while $w$ is concave there, and at the full ties convexity and homogeneity reproduce the recursion behind \eqref{eq:originprofile}.

\begin{lemma}\label{lem:minimal}
Let $m\geq1$, and let $w:\R^m\to\R$ be a convex, permutation invariant function that is concave in each scale variable $\xi_i=e^{2\kappa z_i}$, in the sense of \eqref{eq:limitineq}, and satisfies
\[
 w(z+a\one)=e^{\kappa a}w(z)\quad\text{and}\quad w(z)\geq c e^{\kappa\max_i z_i}\qquad\text{for }z\in\R^m,\ a\in\R.
\]
Then $w\geq h_m$.
\end{lemma}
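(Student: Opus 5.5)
The plan is to argue by induction on $m$, comparing $w$ with $h_m$ on a single ordered chamber and then reducing to ``vertices'' where each follower is either tied with the maximum or removed. The base case $m=1$ is immediate: homogeneity gives $w(z_1)=e^{\kappa z_1}w(0)$, and the lower bound at $z=0$ gives $w(0)\geq c=h_1(0)$, so $w\geq c\,e^{\kappa z_1}=h_1$ by \eqref{eq:onecoord}.

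\emph{Monotonicity and removal limits.} For fixed $i$ and the remaining coordinates fixed, $\xi_i\mapsto w$ is concave on $(0,\infty)$ and bounded below by a positive constant. A concave function on a half-line that is bounded below is nondecreasing, so $w$ is nondecreasing in each $z_i$. Hence the limit
\[
 w_{m-1}(z_1,\ldots,z_{m-1})=\lim_{z_m\to-\infty}w(z)
\]
exists, and by symmetry the same holds for removing any set of coordinates. I will check that $w_{m-1}$ inherits all hypotheses of the lemma with $m-1$ in place of $m$:
\begin{itemize}
\item convexity, permutation invariance and concavity in the scale variables pass to pointwise limits;
\item homogeneity $w_{m-1}(z'+a\one)=e^{\kappa a}w_{m-1}(z')$ passes since $z_m-a\to-\infty$ as well;
\item the bound $w_{m-1}\geq c\,e^{\kappa\max_{i<m}z_i}$ holds because $\max_iz_i$ equals $\max_{i<m}z_i$ once $z_m$ is small.
\end{itemize}
The induction hypothesis then gives $w_k\geq h_k$ for all $k<m$. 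Here $w_k$ denotes $w$ with $m-k$ coordinates sent to $-\infty$, and $h_k$ is the corresponding limit of $h_m$ by \eqref{eq:onecoord}.

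\emph{Reduction to full ties.} Fix $z$ with $z_1=M$ the maximum. For each $i\geq2$ in turn, vary $z_i$ over $(-\infty,M]$ with all other coordinates frozen, so $z_1=M$ remains a maximum throughout. On this range $z_i$ is never the unique maximizer, so by Remark~\ref{rem:affine} $h_m$ is affine in $\xi_i$, while $w$ is concave in $\xi_i$. Hence $w-h_m$ is concave in $\xi_i\in(0,e^{2\kappa M}]$, and its value is at least the smaller of its two endpoint values: $\xi_i\to0$ (coordinate removed) or $z_i=M$ (tied with the leader). Iterating over $i=2,\ldots,m$ shows that $w(z)-h_m(z)$ is at least the minimum over vertices at which $k$ coordinates equal $M$ and the rest are removed. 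By homogeneity, the value at such a vertex is $e^{\kappa M}\bigl(w_k(0)-h_k(0)\bigr)$. Therefore it suffices to prove $w_k(0)\geq h_k(0)$ for every $k\leq m$, and by induction only the case $k=m$ remains, namely
\[
 w(0)\geq h_m(0)=\frac{2m}{2m-1}\,h_{m-1}(0).
\]

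\emph{The full-tie recursion.} This is the step I expect to be the main obstacle. Its inputs are: the chord inequality from $\xi_m$-concavity between $\xi_m=0$ (value $w_{m-1}(0)\geq h_{m-1}(0)$) and $\xi_m=1$ (value $w(0)$); homogeneity, which gives $w'(0;\one)=\kappa w(0)$; and convexity with symmetry, which lets me take the symmetric subgradient $p=\tfrac{\kappa w(0)}m\one$ at $0$.

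First I will try to bound the left derivative of $t\mapsto w(0,\ldots,0,t)$ at $t=0$ in two ways.
\begin{itemize}
\item Concavity in $\xi_m$ bounds it above by $2\kappa\bigl(w(0)-w_{m-1}(0)\bigr)$, the chord slope in $\xi_m$ times $d\xi_m/dz_m=2\kappa$.
\item A lower bound of order $\kappa w(0)/m$ should come from convexity and homogeneity. The idea is to write $-e_m=(\one-e_m)-\one$ and use the point $(t,\ldots,t,0)=e^{\kappa t}\cdot(0,\ldots,0,-t)$, where the top $m-1$ coordinates are tied, so that $\xi_m$-concavity applies again.
\end{itemize}
Combining the two bounds would give $2\kappa\bigl(w(0)-w_{m-1}(0)\bigr)\geq\kappa w(0)/m$, which is exactly $w(0)\geq\frac{2m}{2m-1}w_{m-1}(0)$. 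If the directional-derivative bookkeeping does not close this way, the fallback is to integrate along the whole ray $t\mapsto(0,\ldots,0,t)$, $t\leq0$. On this ray $w$ is convex in $t$ and concave in $e^{2\kappa t}$, and I would compare it with the explicit extremal profile $h_m(0,\ldots,0,t)$. That profile is affine in $e^{2\kappa t}$ and has the slope $\kappa h_m(0)/m$ computed in Remark~\ref{rem:collision}, which is the value the lower bound above has to match.
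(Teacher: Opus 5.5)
Your induction, the removal limits and the vertex reduction are sound and follow the paper's proof. The paper writes the reduction as the convex combination $w(z)\ge(1-\theta)\underline w(\hat z)+\theta\,w(z^{(M)})$ rather than as a minimum over endpoint values, which makes no difference. Your bound (a), $g'(0^-)\le2\kappa\bigl(w(0)-w_{m-1}(0)\bigr)$ for $g(t)=w(te_m)$, is also correct.

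The gap is step (b): convexity, symmetry and homogeneity give no lower bound on $g'(0^-)$. The symmetric subgradient $p=\tfrac{\kappa w(0)}{m}\one$ gives $w(te_m)\ge w(0)+t\kappa w(0)/m$ for all $t$. That is $g'(0^-)\le\kappa w(0)/m\le g'(0^+)$, which points the wrong way for the left derivative. Your decomposition $-e_m=(\one-e_m)-\one$ only yields $g'(0^-)\ge\kappa w(0)-w'(0;\one-e_m)$, and convexity bounds $w'(0;\one-e_m)$ from below, not from above. The chord inequality in $\xi_m$ at $(t,\dots,t,0)$, once divided by $e^{\kappa t}$, is literally the chord inequality behind (a), so it adds nothing. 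In fact $w(z)=c\,e^{\kappa\max_iz_i}$ is convex, permutation invariant and homogeneous, and it meets the lower bound. Yet $g'(0^-)=0$ and $w(0)=c<h_m(0)$ for $m\ge2$. The only hypothesis it violates is concavity in the scale variables, and it violates it only at ties.

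That points to the missing idea: apply $\xi_m$-concavity at the tie point $\xi_m=1$ itself. There it says $g'(0^+)\le g'(0^-)$, which is the reverse of the order that convexity imposes. Then $g'(0^-)\ge g'(0^+)\ge\kappa w(0)/m$. Combined with (a), this gives $2\kappa\bigl(w(0)-w_{m-1}(0)\bigr)\ge\kappa w(0)/m$, that is, $w(0)\ge\tfrac{2m}{2m-1}w_{m-1}(0)\ge h_m(0)$. Your decomposition also closes once this is added: sublinearity and symmetry give $w'(0;\one-e_m)\le(m-1)g'(0^+)\le(m-1)g'(0^-)$.

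This is exactly the paper's argument. From $w(\rho e_m)\ge e^{\kappa\rho/m}w(0)$ it obtains $f'(1^-)\le f(1)/(2m)\le f'(1^+)$ for $f(\xi)=w(\rho e_m)$ with $\xi=e^{2\kappa\rho}$. Concavity then forces equality, and the tangent line at $\xi=1$ is sent to $\xi\downarrow0$. Your fallback of comparing with $h_m(0,\dots,0,t)$ along the ray is unnecessary, and on its own it does not supply the missing inequality.
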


\begin{proof}
We argue by induction on $m$. For $m=1$, the lower bound and \eqref{eq:onecoord} give $w(z)\geq c\,e^{\kappa z}=h_1(z)$. Let $m\geq2$ and assume the lemma in dimension $m-1$.

We first remove a coordinate. For fixed $z_1,\ldots,z_{m-1}$, the function $\xi_m\mapsto w(z)$ is concave and positive on $(0,\infty)$, hence nondecreasing, since a concave function that decreases somewhere tends to $-\infty$. So the limit
\[
 \underline w(z_1,\ldots,z_{m-1})=\lim_{z_m\to-\infty}w(z)
\]
exists. The function $\underline w$ is again convex, permutation invariant and concave in each scale variable; it satisfies $\underline w(y+a\one)=e^{\kappa a}\underline w(y)$, because $z_m+a\to-\infty$ as well, and $\underline w(y)\geq c e^{\kappa\max_iy_i}$. By the induction hypothesis, $\underline w\geq h_{m-1}$. By symmetry, sending any other coordinate to $-\infty$ gives $\underline w$ of the remaining coordinates, and by \eqref{eq:onecoord} and symmetry the corresponding limit of $h_m$ is $h_{m-1}$.

Next we treat the full tie. The $m$ points $\rho e_1,\ldots,\rho e_m$ average to $(\rho/m)\one$, so convexity, symmetry and homogeneity give $w(\rho e_m)\geq e^{\kappa\rho/m}w(0)$ for every $\rho\in\R$. In the variable $\xi=e^{2\kappa\rho}$, the function $f(\xi)=w(\rho e_m)$ is concave and nondecreasing on $(0,\infty)$, with $f(0+)=\underline w(0)$ and $f(\xi)\geq\xi^{1/(2m)}f(1)$, with equality at $\xi=1$. The one-sided derivatives at $\xi=1$ therefore satisfy $f'(1^-)\leq f(1)/(2m)\leq f'(1^+)$, while concavity gives $f'(1^+)\leq f'(1^-)$, so $f'(1^-)=f(1)/(2m)$. By concavity, $f(\xi)\leq f(1)-f'(1^-)(1-\xi)$ for $0<\xi<1$, and letting $\xi\downarrow0$ gives $\underline w(0)\leq(1-\tfrac1{2m})w(0)$. By \eqref{eq:originprofile}, $h_m(0)=\tfrac{2m}{2m-1}h_{m-1}(0)$, so
\[
 w(0)\geq\frac{2m}{2m-1}\,\underline w(0)\geq\frac{2m}{2m-1}\,h_{m-1}(0)=h_m(0).
\]

Finally we reduce to the full tie. Fix $z$, let $M=\max_iz_i$, and argue by induction on the number of coordinates below $M$. If there are none, $z=M\one$ and homogeneity gives $w(z)=e^{\kappa M}w(0)\geq e^{\kappa M}h_m(0)=h_m(z)$. Otherwise let $z_i<M$ and $\theta=e^{2\kappa(z_i-M)}\in(0,1)$, and let $\hat z\in\R^{m-1}$ list the coordinates $z_j$, $j\neq i$. Concavity of $w$ in $\xi_i$ on $(0,e^{2\kappa M}]$, with the left endpoint sent to $0$, gives
\[
 w(z)\geq(1-\theta)\,\underline w(\hat z)+\theta\,w(z^{(M)}),
\]
where $z^{(M)}$ is $z$ with $z_i$ replaced by $M$. By Remark~\ref{rem:affine}, $h_m$ is affine in $\xi_i$ on $(0,e^{2\kappa M}]$, because $M$ is attained by another coordinate, so the same relation holds for $h_m$ with equality. Since $\underline w\geq h_{m-1}$ and $z^{(M)}$ has fewer coordinates below $M$, the induction gives $w(z)\geq(1-\theta)h_{m-1}(\hat z)+\theta h_m(z^{(M)})=h_m(z)$.
\end{proof}

\begin{proof}[Proof of Theorem~\ref{thm:profile}]
By Lemmas~\ref{lem:limit} and~\ref{lem:limitineq}, the limit $h_\star$ satisfies the hypotheses of Lemma~\ref{lem:minimal} with $m=n-1$, which therefore gives $h_\star\geq h_{n-1}$, while \eqref{eq:hbounds} gives $h_\star\leq h_{n-1}$. Hence $h_\star=h_{n-1}$. Since $h_{n-1}$ is continuous and positive, the locally uniform convergence of $e^{\kappa s}(u_n(s,z)-s)$ to $h_{n-1}(z)$ in Lemma~\ref{lem:limit} is the statement $u_n(s,z)-s\sim e^{-\kappa s}h_{n-1}(z)$, locally uniformly in $z$, and the monotonicity in $s$ is also part of Lemma~\ref{lem:limit}.
\end{proof}

\begin{proof}[Proof of Corollary~\ref{cor:player}]
For fixed $s$, the function $V_s(z)=e^{\kappa s}(u_n(s,z)-s)$ is convex and differentiable in $z$, with $\partial_{z_i}V_s(z)=e^{\kappa s}\partial_{z_i}u_n(s,z)$, so for $t>0$
\[
 \frac{V_s(z)-V_s(z-te_i)}t\leq e^{\kappa s}\partial_{z_i}u_n(s,z)\leq\frac{V_s(z+te_i)-V_s(z)}t .
\]
Let $K$ be compact and $\eps>0$. Since $h_{n-1}\in C^2$, we may choose $t>0$ such that both difference quotients of $h_{n-1}$ are within $\eps$ of $\partial_{z_i}h_{n-1}(z)$ for $z\in K$, and by Theorem~\ref{thm:profile} and Lemma~\ref{lem:limit} the difference quotients of $V_s$ are within $\eps$ of those of $h_{n-1}$ on $K$ for all large $s$. Hence $e^{\kappa s}\partial_{z_i}u_n(s,z)\to\partial_{z_i}h_{n-1}(z)$ uniformly on $K$. For the leader, $\nabla u_n\cdot\one=1$ gives $e^{\kappa s}(1-\partial_su_n)=\sum_ie^{\kappa s}\partial_{z_i}u_n\to\sum_i\partial_{z_i}h_{n-1}=\kappa h_{n-1}$, locally uniformly, the last identity by differentiating $h_{n-1}(z+a\one)=e^{\kappa a}h_{n-1}(z)$ in $a$ at $a=0$. The limits are continuous and positive, by \eqref{eq:first} for $\partial_{z_i}h_{n-1}$, so this is \eqref{eq:player}. For the tied case, by symmetry all $\partial_{z_i}h_{n-1}(0)$ are equal, so each equals $\kappa h_{n-1}(0)/(n-1)$. For six experts this is $\kappa h_5(0)/5=64/315$ by \eqref{eq:originprofile}.
\end{proof}

\begin{remark}[Agreement with the four- and five-expert solutions]\label{rem:tails}
Theorem~\ref{thm:profile} can be checked against the closed-form solutions of~\cite{calder2026explicit}. Substituting $x=(s,0,0,0)$ in the four-expert formula~\cite[(2.14)]{calder2026explicit}, where $z_1=z_2=z_3=Z:=s/\kappa$ and $\kappa s=2Z$, gives
\[
 F_4(Z,Z,Z)=\arctan(e^{-Z})\cosh^3Z
   +\frac12\log\coth\frac Z2\,\sinh^3Z-\frac12\sinh2Z .
\]
Expanding in powers of $e^{-Z}$, the coefficients of $e^{2Z}$ cancel and the coefficient of $e^{-2Z}$ is $\tfrac45-\tfrac14+\tfrac14=\tfrac45$, so
\[
 u_4(s,0,0,0)-s
  =\frac{F_4(Z,Z,Z)}{\kappa}\sim\frac4{5\sqrt2}e^{-\sqrt2s}
  =h_3(0)e^{-\sqrt2s}\quad\text{as }s\to\infty,
\]
in agreement with Theorem~\ref{thm:profile} and \eqref{eq:originprofile}. The comparison for five experts is also exact. Along $x=(s,0,0,0,0)$ the Region III coordinates of~\cite[Theorem 2.1]{calder2026explicit} are $a_1=a_2=a_3=a_4=a:=\kappa s/3$. Expanding the quadrature~\cite[(2.8)]{calder2026explicit} in powers of $e^{-2X}$ gives
\[
 e(X)=\frac12+\frac12e^{-2X}-\frac25e^{-4X}+\Bigl(\frac{12}7X-\frac{671}{490}\Bigr)e^{-6X}+O(Xe^{-8X})\quad\text{as }X\to\infty,
\]
where the term linear in $X$ comes from the term $X/16$ of $I(X)$ in~\cite[(2.7)]{calder2026explicit}. This expansion is a convergent series in $e^{-2X}$ with coefficients affine in $X$, so it may be differentiated term by term. The function $\lambda(L)=\log\coth(L/2)$ in~\cite[(2.17)]{calder2026explicit} has the convergent expansion
\[
 \lambda(a)=2\sum_{j\geq0}\frac{e^{-(2j+1)a}}{2j+1}=2e^{-a}+\frac23e^{-3a}+\frac25e^{-5a}+\frac27e^{-7a}+O(e^{-9a}).
\]
In~\cite[(2.16)--(2.17)]{calder2026explicit} with $a_1=a_2=a_3=a_4=a$, the trace and its derivatives, evaluated at $a$, are multiplied by functions of order $e^{3a}$, while $\lambda(a)$ is multiplied by $\tfrac{12}7\sinh^3a\cosh a$, which is of order $e^{4a}$. The displayed expansions therefore determine $F$ up to $O(ae^{-5a})$, and all four displayed terms of $\lambda$ are needed. Substituting them, the coefficients of $e^{3a}$, $e^{a}$ and $e^{-a}$ cancel, including those multiplying $a$, and the coefficient of $e^{-3a}$ in $F$ is $32/35$. The expansion of $e$ and these cancellations are verified in exact rational arithmetic in \texttt{five\_expert\_tail.py}. Since $3a=\sqrt2s$,
\[
 u_5(s,0,0,0,0)-s=\frac F\kappa\sim\frac{32}{35\sqrt2}e^{-\sqrt2s}=h_4(0)e^{-\sqrt2s}\quad\text{as }s\to\infty,
\]
again in agreement with Theorem~\ref{thm:profile} and \eqref{eq:originprofile}.
\end{remark}

\subsection{At most two ones: proof of Theorem~\ref{thm:exclude}}\label{sec:conclude}

With the profile identified, Theorem~\ref{thm:exclude} follows by passing the linear equation of a hypothetical optimal direction to the one-leader limit, where the sign inequality~\eqref{eq:deficit} allows it to select at most one follower.

\begin{proof}[Proof of Theorem~\ref{thm:exclude}]
Suppose $\v=(1,\mathbf w)$ satisfies \eqref{eq:fixed}, let $J$ be the set of selected followers, so that $\mathbf w=\one_J$, and suppose that $|J|\geq2$. Fix $z$ with $z_1>\cdots>z_{n-1}$, and choose $\eps>0$ such that $z+r\mathbf w$ is strictly decreasing for $|r|\leq\eps$. For $S>z_1+2\eps$, the segment $r\mapsto(S+r,z+r\mathbf w)$, $|r|\leq\eps$, lies in the ordered sector and has the direction $\v$, and the payoff along it is $S+r$. By Lemma~\ref{lem:lines}(iv),
\[
 u_n(S+r,z+r\mathbf w)=S+r+Ae^{\kappa r}+Be^{-\kappa r}\qquad(|r|\leq\eps),
\]
with $A$ and $B$ depending on $S$. Multiplying by $e^{\kappa(S+r)}$,
\[
 e^{\kappa(S+r)}\bigl(u_n(S+r,z+r\mathbf w)-(S+r)\bigr)=e^{\kappa S}\bigl(Ae^{2\kappa r}+B\bigr),
\]
which is affine in $e^{2\kappa r}$. As $S\to\infty$, the left side converges to $h_{n-1}(z+r\mathbf w)$ for each $r$, by Lemma~\ref{lem:limit} and Theorem~\ref{thm:profile}, and pointwise limits of functions affine in $e^{2\kappa r}$ are affine in $e^{2\kappa r}$, since such a function is determined by its values at $r=\pm\eps$. Hence $h_{n-1}(z+r\mathbf w)=\alpha+\beta e^{2\kappa r}$ for $|r|\leq\eps$, and since $h_{n-1}\in C^2$, differentiating at $r=0$ gives
\[
 D_J^2h_{n-1}(z)-2\kappa D_Jh_{n-1}(z)=4\kappa^2\beta-2\kappa\cdot2\kappa\beta=0
 \qquad\text{on }\{z_1>\cdots>z_{n-1}\}.
\]
By Lemma~\ref{lem:profile}(iii), the left side is strictly negative when $|J|\geq2$, a contradiction. Thus $\v$ has at most two ones. By Lemma~\ref{lem:prefix} and Theorem~\ref{thm:five} the five-expert prefix must be $10100$, which already uses both ones; every later bit is therefore zero.
\end{proof}

\begin{remark}\label{rem:sharper}
The proof gives more than the statement. By the strictness clause in Lemma~\ref{lem:profile}(iii), the singleton $J=\{1\}$, the leading follower, is also excluded. So the profile alone leaves only $10\cdots0$ and the directions with a single one in a position $\geq3$. The first step of the proof of Theorem~\ref{thm:five}, applied to the five-bit prefix through Lemma~\ref{lem:prefix}, shows that this prefix has two or three ones, which excludes $10\cdots0$ and every direction whose second one lies in position six or later. Only the choice among positions three, four and five requires the transposition argument and~\cite[Theorem~2.2]{calder2026explicit}.
\end{remark}

\section{Nonexistence for six or more experts}
\label{sec:last}

Assuming the last candidate $1010\cdots0$ is optimal, we restrict its linear equation to the sector where the bottom $q=n-3$ experts are tied (Section~\ref{sec:slice}) and show, through trace equations on the faces (Section~\ref{sec:scalar_trace}) and an explicit transfer kernel (Section~\ref{sec:transfer}), that the quadratic coefficient of $u_n$ along the three-leader ray $(s,s,s,0,\dots,0)$ must equal $\tfrac{3q}{2(q+1)}u_n(0)$. The Bellman inequality for the direction $1110\cdots0$ bounds the same coefficient by $u_n(0)$, which is a contradiction for $q\ge3$ and proves Theorem~\ref{thm:nonexistence} (Section~\ref{sec:contradiction}), while at $q=2$ the two bounds coincide, consistent with the optimality of $10100$ for five experts.
\subsection{The sector where the bottom experts are tied}\label{sec:slice}

We first reduce the equation for $1010\cdots0$ to a boundary-value problem for a function of three scaled gaps. Throughout this section $n\geq5$, $q=n-3\geq2$, and $A=(1,0,1,0,\dots,0)$. In Sections~\ref{sec:slice}--\ref{sec:transfer} we assume that $A$ satisfies \eqref{eq:fixed}; for $n\geq6$ this is the hypothesis that the proof of Theorem~\ref{thm:nonexistence} will contradict, while for $n=5$ it holds by~\cite{calder2026explicit}, which makes the case $q=2$ a consistency check. On the part of the sector where the bottom $q=n-3$ coordinates $x_4,\dots,x_n$ coincide, introduce the scaled gaps and the correction $F$,
\[
y_1=\kappa(x_1-x_2),\quad y_2=\kappa(x_2-x_3),\quad y_3=\kappa(x_3-x_4),
\]
\[
u_n(x_1,x_2,x_3,x_4,\dots,x_4)=x_1+\frac1\kappa\,F(y_1,y_2,y_3).
\]
By Section~\ref{sec:basic}, $F$ is bounded and $C^{1,1}$ up to the boundary of the positive orthant, with bounded first derivatives.

\begin{lemma}\label{lem:slice}
Suppose that $A$ satisfies \eqref{eq:fixed}. Then $F$ satisfies
\begin{equation}\label{eq:fchar}
(\partial_1-\partial_2+\partial_3)^2F=F\qquad\text{along characteristics,}
\end{equation}
that is, $F(y+\sigma(1,-1,1))=A_0e^{\sigma}+B_0e^{-\sigma}$ on every segment of the closed orthant in the direction $(1,-1,1)$, with constants $A_0$, $B_0$ depending on the segment, and the face conditions
\begin{equation}\label{eq:faces}
2F_1-F_2=-1\ \ (y_1=0),\ \ 
-F_1+2F_2-F_3=0\ \ (y_2=0),\ \ 
F_3=\alpha F_2\ \ (y_3=0),
\end{equation}
where $\alpha=\frac{q}{q+1}$.
\end{lemma}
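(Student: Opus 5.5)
The plan is to read off both parts of the lemma from the restriction of $u_n$ to the slice $\{x_4=\cdots=x_n\}$. The characteristic equation \eqref{eq:fchar} will come from Lemma~\ref{lem:lines}(iv) applied along the direction $A$. The face conditions \eqref{eq:faces} will come from permutation invariance of $u_n$ combined with the $C^1$ regularity of Lemma~\ref{lem:regularity}. Throughout, I write $G(x_1,x_2,x_3,x_4)=u_n(x_1,x_2,x_3,x_4,\dots,x_4)=x_1+\kappa^{-1}F(y)$.

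For \eqref{eq:fchar}, first note that the direction $A=(1,0,1,0,\dots,0)$ does not move $x_4,\dots,x_n$, so a segment in direction $A$ that starts in the slice stays in it. Along $x+tA$ the gaps change by $y\mapsto y+\kappa t(1,-1,1)$. The payoff is $x_1+t$, so Lemma~\ref{lem:lines}(iv) with $\beta=1$ gives $u_n(x+tA)=x_1+t+A'e^{\kappa t}+B'e^{-\kappa t}$. Subtracting $x_1+t$, multiplying by $\kappa$ and setting $\sigma=\kappa t$ yields exactly $F(y+\sigma(1,-1,1))=A_0e^{\sigma}+B_0e^{-\sigma}$.

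The catch, and the step I expect to need the most care, is that \eqref{eq:fixed} is only assumed on the \emph{strictly} ordered sector, whereas every point of the slice (even with $y$ in the open orthant) has the ties $x_4=\cdots=x_n$. I would handle this by approximation. Take a compact segment $\Sigma$ in direction $A$ with $x_1>x_2>x_3>x_4$ along it, and perturb the bottom coordinates to $x_4>x_4-\delta>x_4-2\delta>\cdots$. The perturbed segment $\Sigma_\delta$ is again in direction $A$ and lies in the strictly ordered sector, so Lemma~\ref{lem:lines}(iv) applies on $\Sigma_\delta$. The constants $A',B'$ are determined continuously by the values of $u_n$ at the two endpoints, because the system $A'e^{\kappa t}+B'e^{-\kappa t}$ at $t=a,b$ is invertible. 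Hence continuity of $u_n$ lets the representation pass to the limit $\delta\to0$. Segments that touch the faces of the closed orthant are handled the same way: approximate them by interior segments and use the continuity of $F$, the constants again being fixed by endpoint values.

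For \eqref{eq:faces}, I use that $u_n\in C^1(\R^n)$ by Lemma~\ref{lem:regularity}. Then $F$ is $C^1$ up to the boundary of the orthant, and the chain rule gives $\partial_{x_1}G=1+F_1$, $\partial_{x_2}G=-F_1+F_2$, $\partial_{x_3}G=-F_2+F_3$ and $\partial_{x_4}G=-F_3$. Each face condition then follows from a symmetry of $u_n$.
\begin{itemize}
\item On $y_1=0$: invariance of $u_n$ under swapping $x_1$ and $x_2$ gives $\partial_{x_1}u_n=\partial_{x_2}u_n$ at points with $x_1=x_2$. Since $\partial_{x_i}G=\partial_{x_i}u_n$ for $i\le3$, this reads $1+F_1=-F_1+F_2$, that is, $2F_1-F_2=-1$.
\item On $y_2=0$: the swap of $x_2$ and $x_3$ gives $-F_1+F_2=-F_2+F_3$, that is, $-F_1+2F_2-F_3=0$.
\item On $y_3=0$: here $x_3=x_4=\cdots=x_n$, so all $\partial_{x_i}u_n$ with $i\geq3$ coincide. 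The common variable $x_4$ of $G$ drives $q$ coordinates, so $\partial_{x_4}G=\sum_{i\ge4}\partial_{x_i}u_n=q\,\partial_{x_3}u_n$. This gives $-F_3=q(-F_2+F_3)$, that is, $F_3=\tfrac{q}{q+1}F_2=\alpha F_2$.
\end{itemize}
These derivative identities are pointwise, so no second-order information is needed. The only analytic input is that $\nabla u_n$ exists and is continuous up to the tie sets, which is guaranteed by $u_n\in C^{1,1}(\R^n)$.
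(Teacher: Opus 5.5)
Your proposal is correct and follows essentially the same route as the paper. The characteristic equation comes from Lemma~\ref{lem:lines}(iv) on segments in the direction $A$, after perturbing the tied bottom coordinates into the open sector and passing to the limit using endpoint-determined coefficients and continuity, then extending to the closed orthant. The face conditions come from $u_n\in C^1$, permutation symmetry, and the same chain-rule bookkeeping, in which each of the $q$ tied bottom coordinates carries $-F_3/q$.
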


\begin{proof}
Moving $x$ in the direction $A$ moves $y$ in the direction $\kappa(1,-1,1)$. On a segment of the open sector in the direction $A$, Lemma~\ref{lem:lines}(iv) gives $u_n(x+sA)=x_1+s+A_0e^{\kappa s}+B_0e^{-\kappa s}$, and with $\sigma=\kappa s$ and $u_n=x_1+F/\kappa$ this is $F=\kappa A_0e^{\sigma}+\kappa B_0e^{-\sigma}$ along the corresponding characteristic, which is \eqref{eq:fchar}. No Hessian on the bottom collision is needed: $A$ has no components in the bottom coordinates, so these representations hold on the affine sections $\{(x_1,x_2,x_3,x_4-\eps_1,\dots,x_4-\eps_q):x_1>x_2>x_3>x_4\}$, $0=\eps_1<\dots<\eps_q$, which lie in the open sector. As $\eps\downarrow0$ the restrictions converge uniformly because $u_n$ is Lipschitz, and uniform limits of functions of the form $A_0e^{\sigma}+B_0e^{-\sigma}$ on a segment have the same form, the coefficients being determined by two values. This gives \eqref{eq:fchar} on the open orthant, and on the closed orthant by continuity.

The derivatives of $u_n$ in its first three coordinates are $1+F_1$, $-F_1+F_2$ and $-F_2+F_3$, and each of the $q$ equal bottom coordinates carries $-F_3/q$. Since $u_n$ is $C^1$ and symmetric, $\partial_{x_i}u_n=\partial_{x_{i+1}}u_n$ on $\{x_i=x_{i+1}\}$, which gives \eqref{eq:faces}.
\end{proof}

Equation~\eqref{eq:fchar} is~\cite[(3.5)]{calder2026explicit} with one gap removed, and the first two face conditions are~\cite[(3.6)]{calder2026explicit}; only the third sees the cluster size.

\subsection{A scalar trace is twice differentiable by its ODE}\label{sec:scalar_trace}

This subsection extracts a single one-variable function $e(X)$ from the three-dimensional boundary-value problem and shows that its derivative has a first-order expansion at the origin, without assuming any second derivative of $u_n.$

Set $\Psi(\sigma,\omega)=F(\sigma,\omega,\sigma)$, $X=\sigma+\omega$, $H(X)=\Psi(X,0)$ and $e(X)=\Psi(0,X)$. Thus $e(X)=F(0,X,0)$ is the correction on the two-leader ray $x_1=x_2>x_3=x_4=\dots=x_n$. For $n=5$ it is the trace $e$ of~\cite{calder2026explicit}, given there by the quadrature~\cite[(2.8)]{calder2026explicit}; for $n\geq6$ the same construction applies to $u_n$, and only the constant $\beta$ in \eqref{eq:psi} below changes. The next lemma collects what the rest of the argument needs about $e$.

\begin{lemma}\label{lem:trace}
Suppose that $A$ satisfies \eqref{eq:fixed}. Then $e\in C^1([0,\infty))\cap C^2((0,\infty))$, its derivative $e'(X)=F_2(0,X,0)$ is bounded and Lipschitz, and
\begin{equation}\label{eq:e0}
e'(X)=e'(0)+e(0)\,X+O(X^2)\quad(X\downarrow0),\qquad e'(0)=-\frac{q+1}{q+3},\qquad
e(0)=\kappa u_n(0)>0.
\end{equation}
\end{lemma}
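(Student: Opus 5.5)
The plan is to reduce the three-dimensional problem of Lemma~\ref{lem:slice} to a scalar ODE for $e$ on the invariant plane $\{y_1=y_3\}$. The characteristic direction $(1,-1,1)$ preserves this plane, and it carries $(0,X,0)$ to $(X,0,X)$ while keeping $X=\sigma+\omega$ constant. So for fixed $X>0$, \eqref{eq:fchar} gives
\[
\Psi(\sigma,X-\sigma)=e(X)\cosh\sigma+D(X)\sinh\sigma\qquad(0\leq\sigma\leq X),
\]
where $D(X)$ is the derivative of $F$ along $(1,-1,1)$ at $(0,X,0)$. This representation needs only the segment form in Lemma~\ref{lem:slice}, not second derivatives.

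The first steps are bookkeeping with the face conditions \eqref{eq:faces}.
\begin{enumerate}
\item Since $u_n\in C^{1,1}$ with a globally Lipschitz gradient (Lemma~\ref{lem:regularity}), $F$ is $C^1$ up to the boundary with bounded Lipschitz first derivatives. Hence $e'(X)=F_2(0,X,0)$ is bounded and Lipschitz, and $e\in C^1([0,\infty))$.
\item At $(0,X,0)$ both $y_1=0$ and $y_3=0$ hold, so $F_1=(F_2-1)/2$ and $F_3=\alpha F_2$. This gives
\[
D=F_1-F_2+F_3=\bigl(\alpha-\tfrac12\bigr)e'(X)-\tfrac12 .
\]
\item At the other endpoint $(X,0,X)$ the face $y_2=0$ gives $F_2=(F_1+F_3)/2$. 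Differentiating $H(X)=F(X,0,X)$ gives $H'=F_1+F_3=2F_2$, while the characteristic derivative there is $F_1-F_2+F_3=F_2$. Therefore
\[
H=e\cosh X+D\sinh X,\qquad \tfrac12H'=e\sinh X+D\cosh X .
\]
\end{enumerate}

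For regularity on $(0,\infty)$, note that $H$ and $e$ are $C^1$ there, so $D=(H-e\cosh X)/\sinh X$ is $C^1$ on $(0,\infty)$. Since $\alpha-\tfrac12=\tfrac{q-1}{2(q+1)}>0$, it follows that $e'\in C^1((0,\infty))$, which is the claim $e\in C^2((0,\infty))$. Differentiating $H$ and equating with the expression for $H'$ then gives the trace ODE on $(0,\infty)$:
\[
\bigl(\alpha-\tfrac12\bigr)\tanh X\,e''+\bigl(\tfrac32-\alpha\bigr)e'-e\tanh X+\tfrac12=0 .
\]
Every term except $(\tfrac32-\alpha)e'+\tfrac12$ vanishes as $X\downarrow0$, because $e''\tanh X$ can be controlled using the Lipschitz bound on $e'$. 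Integrating the ODE against test functions near $0$ makes this rigorous and gives $(\tfrac32-\alpha)e'(0)=-\tfrac12$. Hence
\[
e'(0)=-\frac{1}{3-2\alpha}=-\frac{q+1}{q+3} .
\]
The value at the origin is immediate: $e(0)=F(0,0,0)=\kappa(u_n(0)-0)$, and $u_n(0)>0$ by \eqref{eq:pairbounds}.

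The main obstacle is the second-order expansion at the regular singular point $X=0$. I would set $g=e'-e'(0)$, which is Lipschitz with $g(0)=0$, and use $e(X)=e(0)+O(X)$ and $\tanh X=X+O(X^3)$ to rewrite the ODE as
\[
\bigl(\alpha-\tfrac12\bigr)Xg'+\bigl(\tfrac32-\alpha\bigr)g=e(0)X+O(X^2).
\]
With $\gamma=(\tfrac32-\alpha)/(\alpha-\tfrac12)>0$, the integrating factor $X^{\gamma}$ gives
\[
(X^\gamma g)'=\frac{X^{\gamma-1}\bigl(e(0)X+O(X^2)\bigr)}{\alpha-\tfrac12}.
\]
Integrating from $0$, where the boundary term $X^\gamma g$ vanishes because $g$ is bounded, and using $\gamma+1=1/(\alpha-\tfrac12)$, yields $g(X)=e(0)X+O(X^2)$. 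This is \eqref{eq:e0}. Care is needed to check that the $O(X^2)$ remainder, which contains $g\cdot O(X^2)$ terms, stays uniform, but the Lipschitz bound on $e'$ should suffice.
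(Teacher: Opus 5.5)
Your proposal is correct and follows the paper's proof: the same characteristic on the invariant plane $\{y_1=y_3\}$, the same evaluation of $D=(\alpha-\tfrac12)e'-\tfrac12$ and $\tfrac12H'=e\sinh X+D\cosh X$ from the face conditions, the same trace ODE (dividing yours by $(\alpha-\tfrac12)\tanh X$ gives exactly $e''+\gamma\coth X\,e'-d\,e=-\tfrac d2\coth X$), and the same use of the Lipschitz bound on $e'$ to exclude the singular homogeneous solution. The differences are only local. You get $e'(0)$ as the $X\downarrow0$ limit of the ODE, which is already rigorous pointwise because $|e''|\leq L$ on $(0,\infty)$, so no test functions are needed; the paper reads $e'(0)$ off the linear system of the three face conditions at the corner. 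You also expand via the frozen-coefficient Euler equation with integrating factor $X^\gamma$, whereas the paper integrates exactly, $(\sinh^dX\,\eta)'=\tfrac d2\sinh^dX$ for $\eta=e-e''$, which in addition yields the global identity $e'(X)-e'(0)=\tanh X\,(e+\eta/\gamma)$.
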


\begin{proof}
We first compute $e'(0)$ from the face conditions alone. At the corner $y=0$ all three conditions \eqref{eq:faces} hold, and since $F$ is $C^1$ they form a linear system for $\nabla F(0)$: $F_3=\alpha F_2$, $F_1=2F_2-F_3=(2-\alpha)F_2$, and $2F_1-F_2=-1$. Hence $(3-2\alpha)F_2(0)=-1$, that is, $e'(0)=F_2(0)=-(q+1)/(q+3)$. Also $e(0)=F(0,0,0)=\kappa u_n(0)$, which is positive by \eqref{eq:pairbounds}.

Next we derive an equation for $e$ from a single characteristic. The characteristic direction $(1,-1,1)$ preserves $y_1-y_3$, hence is tangent to the slice $\{y_1=y_3\}$, where it acts as $\partial_\sigma-\partial_\omega$. Since $\Psi_\sigma=F_1+F_3$ and $\Psi_\omega=F_2$, the face conditions \eqref{eq:faces} become
\begin{equation}\label{eq:psi}
(\partial_\sigma-\partial_\omega)^2\Psi=\Psi,\quad
\Psi_\omega=\frac12\Psi_\sigma\ \ (\omega=0),\quad
\Psi_\sigma=\beta\Psi_\omega-\frac12\ \ (\sigma=0),
\end{equation}
where $\beta=\frac12+\alpha=\frac{3q+1}{2(q+1)}$. For $q=2$, $\beta=7/6$ and \eqref{eq:psi} is~\cite[(3.17)]{calder2026explicit}. Write
\[
\mu=\beta-1=\frac{q-1}{2(q+1)}>0,\qquad d=\frac1\mu=\frac{2(q+1)}{q-1},\qquad
\gamma=d-1=\frac{q+3}{q-1},
\]
(for $q=2$: $\mu=\tfrac16$, $d=6$, $\gamma=5$), so that $e'(0)=-d/(2\gamma)$. Fix $X>0$. By \eqref{eq:fchar}, along the characteristic segment $s\mapsto(s,X-s)$, $0\leq s\leq X$, we have $\Psi(s,X-s)=e(X)\cosh s+D\sinh s$, where $D=(\Psi_\sigma-\Psi_\omega)(0,X)=\mu e'(X)-\tfrac12$ by the condition at $\sigma=0$. At $s=X$ this gives $H=e\cosh X+D\sinh X$, and since $\Psi_\omega=\tfrac12\Psi_\sigma$ at $\omega=0$, the derivative along the segment at $s=X$ is $\tfrac12H'$, so that $H'=2(e\sinh X+D\cosh X)$. The first relation gives
\[
e'(X)=d\Bigl(\frac{H(X)-e(X)\cosh X}{\sinh X}+\frac12\Bigr).
\]
Since $H$ and $e$ are $C^1$, the right side is $C^1$ on $X>0$, so $e\in C^2((0,\infty))$, and differentiating, with both relations used to eliminate $H$ and $H'$, gives
\begin{equation}\label{eq:eode}
e''+\gamma\coth X\,e'-d\,e=-\frac d2\coth X,
\end{equation}
which for $q=2$ is~\cite[(3.22)]{calder2026explicit}.

Finally, we expand $e'$ at the origin. Let $\eta=e-e''$ on $X>0$. By \eqref{eq:eode} and $\gamma e'(0)=-\tfrac d2$,
\begin{equation}\label{eq:eta}
\eta=\gamma\bigl(\coth X\,(e'-e'(0))-e\bigr),\qquad\text{and}\qquad
(\sinh^dX\,\eta)'=\frac d2\sinh^dX ,
\end{equation}
the second identity following from the first by one more use of \eqref{eq:eode}. The general solution of the second identity is $\eta=\tfrac d2\sinh^{-d}X\int_0^X\sinh^dt\,dt+C\sinh^{-d}X$. Since $e'$ is Lipschitz, the first identity shows that $\eta$ is bounded near $0$; therefore $C=0$ and
\begin{equation}\label{eq:etasol}
\eta(X)=\frac{d}{2\sinh^dX}\int_0^X\sinh^dt\,dt,\qquad0\leq\eta(X)\leq\frac d2\,X .
\end{equation}
The first identity in \eqref{eq:eta} can be rewritten as
\begin{equation}\label{eq:efirst}
e'(X)-e'(0)=\tanh X\Bigl(e(X)+\frac{\eta(X)}{\gamma}\Bigr),
\end{equation}
which for $q=2$ is~\cite[(2.9)]{calder2026explicit}. Since $e(X)=e(0)+O(X)$ and $\eta(X)=O(X)$, this is the expansion in \eqref{eq:e0}. No second-order expansion of $u_n$ has been assumed: beyond the $C^1$ regularity of $F$ used in the face conditions, the regularity of the true value enters only through the Lipschitz bound on $e'$, which excludes the singular solution of \eqref{eq:eta}. Finally, $e'$ is bounded and Lipschitz because $F$ has bounded and Lipschitz first derivatives.
\end{proof}

For $q=2$, \eqref{eq:etasol} gives $\eta(X)=3X/7+O(X^3)$, and \eqref{eq:e0} agrees with~\cite[(3.25)]{calder2026explicit}, with $e'(0)=-3/5$ and, by~\cite[Lemma~3.1]{calder2026explicit}, $e(0)=45\pi^2/512$.

\subsection{Transfer from two leading experts to three}\label{sec:transfer}
This subsection sets up a transport problem whose solution carries information from the two-leader ray, where Section~\ref{sec:scalar_trace} established the expansion of $e$, to the three-leader ray, where the contradiction will be found.

Define the traces $a(X,Y)=F(X,0,X+Y)$ and $\ell(X,Y)=F(0,X,Y)$, $X,Y\ge0$, as in~\cite[(3.7)]{calder2026explicit}. We use the operators of~\cite[(3.13)--(3.15)]{calder2026explicit},
\begin{equation}\label{eq:ops}
(\mathcal If)(X)=2\sinh^2X\int_X^\infty\frac{f(t)}{\sinh^3t}\,dt,\ \
\mathcal Kf=-\coth X\,f+\csch X\,\mathcal If,\ \ \mathcal T=\frac12\partial_X+\mathcal K,
\end{equation}
where $\mathcal T$ is the operator~\cite[(3.15)]{calder2026explicit} with $m=2$, whose solution operator $S_2(Y)$ is the propagator of~\cite[Lemma~3.3]{calder2026explicit}.

\begin{lemma}\label{lem:transport}
Suppose that $A$ satisfies \eqref{eq:fixed}. Then $g=\ell_Y$ is bounded and Lipschitz on $[0,\infty)^2$, it solves $g_Y=\mathcal Tg=\tfrac12g_X+\mathcal Kg$, with the operators \eqref{eq:ops}, in the integrated form along characteristics,
\begin{equation}\label{eq:mild}
g(X,Y)=g\bigl(X+\tfrac Y2,0\bigr)+\int_0^Y\bigl(\mathcal Kg(\cdot,Y')\bigr)\bigl(X+\tfrac{Y-Y'}2\bigr)\,dY'\qquad(X>0,\ Y\geq0),
\end{equation}
and
\begin{equation}\label{eq:gdata}
g(X,0)=F_3(0,X,0)=\alpha F_2(0,X,0)=\alpha e'(X).
\end{equation}
\end{lemma}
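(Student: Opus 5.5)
The plan is to follow the derivation of~\cite[\S3.2--3.3]{calder2026explicit}. The goal is to reduce everything to the characteristic representation \eqref{eq:fchar} and the face conditions \eqref{eq:faces} of Lemma~\ref{lem:slice}, which are the only consequences of \eqref{eq:fixed} available. We use the regularity of $F$ only through first derivatives.

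\textbf{Step 1: regularity and initial data.} We have $g=\ell_Y=F_3(0,X,Y)$. Since $F$ is $C^{1,1}$ up to the boundary with bounded first derivatives (Section~\ref{sec:basic}), $g$ is bounded and Lipschitz on $[0,\infty)^2$. At $Y=0$ we are on the face $y_3=0$, where \eqref{eq:faces} gives $F_3=\alpha F_2$. Since $F_2(0,X,0)=e'(X)$ by Lemma~\ref{lem:trace}, this is \eqref{eq:gdata}.

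\textbf{Step 2: relation between the two faces.} Fix $(X,Y)$ and follow the characteristic $\sigma\mapsto(\sigma,X-\sigma,Y+\sigma)$ from the face $y_1=0$ to the face $y_2=0$, which it meets at $(X,0,X+Y)$. By \eqref{eq:fchar},
\[
F(\sigma,X-\sigma,Y+\sigma)=\ell\cosh\sigma+D\sinh\sigma .
\]
The constant $D=(F_1-F_2+F_3)(0,X,Y)$ is expressed through traces: the condition $2F_1-F_2=-1$ on $y_1=0$ gives $F_1=(\ell_X-1)/2$, so
\[
D=-\tfrac12\ell_X-\tfrac12+\ell_Y .
\]
At $\sigma=X$ this yields $a(X,Y)=\ell\cosh X+D\sinh X$. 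The derivative along the characteristic at $\sigma=X$ is controlled by the condition $-F_1+2F_2-F_3=0$ on $y_2=0$. Expressing $F_1,F_3$ there via $a_X$ and $a_Y$ gives a second relation, which equals $\ell\sinh X+D\cosh X$. Eliminating $a$ between these two relations yields a first-order relation linking $\ell,\ell_X,\ell_Y$ and an $X$-antiderivative of the trace along $y_2=0$.

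\textbf{Step 3: the nonlocal term.} Differentiating formally in $Y$ should turn this into an ODE in $X$ of the form
\[
(\sinh^{-2}X\,w)'=-2g/\sinh^3X
\]
for an auxiliary quantity $w$. Among its solutions, the bounded one is $w=\mathcal Ig$, by the same argument as the choice $C=0$ in \eqref{eq:etasol}, now with the decay at $X=\infty$ (boundedness of $F$) selecting the integral $\int_X^\infty$. Substituting back gives
\[
g_Y=\tfrac12g_X-\coth X\,g+\csch X\,\mathcal Ig=\mathcal Tg .
\]

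\textbf{Step 4 and main obstacle: the mild form.} Since $g$ is only Lipschitz, Steps 2--3 must be done without taking second derivatives of $F$. I would first obtain the relations of Step 2 as identities among $C^1$ traces. Then, instead of differentiating in $Y$, I would integrate along the transport characteristics $Y'\mapsto(X+\tfrac{Y-Y'}2,Y')$. A Lipschitz function is absolutely continuous along these lines with $\frac{d}{dY'}g=g_Y-\tfrac12g_X$ almost everywhere, and integrating this from $Y'=0$ to $Y$ gives \eqref{eq:mild}. The main obstacle is justifying the exchange of $\partial_Y$ with the improper integral in $\mathcal I$, and the selection of the bounded solution, using only Lipschitz bounds. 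This is manageable because $\sinh^{-3}$ is integrable at infinity and $g$ is bounded, so dominated convergence applies. At $X\downarrow0$ the kernel is harmless since $X>0$ is fixed in \eqref{eq:mild}.
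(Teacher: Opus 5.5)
Your Steps 1 and 2 are correct and follow the paper. The data \eqref{eq:gdata} come from the face $y_3=0$. Your relations $a=\ell\cosh X+D\sinh X$ and $\tfrac12a_X=\ell\sinh X+D\cosh X$, with $D=\ell_Y-\tfrac12\ell_X-\tfrac12$, give $a_X=2\coth X\,a-2\csch X\,\ell$ once you eliminate $D$ (not $a$). They also give $\ell_X-2\ell_Y=2\coth X\,\ell-2\csch X\,a-1$.

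The gap is in Steps 3--4, which is where the real content of the lemma lies. In Step 4 you say you will not differentiate in $Y$. But integrating $\frac{d}{dY'}g=g_Y-\tfrac12g_X$ along a characteristic yields \eqref{eq:mild} only if you already know $g_Y-\tfrac12g_X=\mathcal Kg$ there. That is exactly the $Y$-differentiated equation of Step 3, which involves $\ell_{YY}$ and $\ell_{XY}$. These are second derivatives of $F$ that exist only almost everywhere, and you never justify the identity. Justifying it would need, for instance, symmetry of the a.e.\ mixed derivatives of a $C^{1,1}$ function. Moreover, the chain rule you quote fails on a fixed line. For a Lipschitz $g$, the partials $g_X,g_Y$ can fail to exist at every point of a given characteristic; take $g=|X+\tfrac Y2-c|$. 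So at best you get the identity on almost every characteristic and then need a continuity argument. The interchange of $\partial_Y$ with the improper integral in $\mathcal I$, which you flag as the main obstacle, is the easy part.

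The missing idea is to do all the analysis on $\ell$, which is $C^1$, and pass to $g$ only through the linear integrated equation. First solve the $a$-equation $(a/\sinh^2X)'=-2\ell/\sinh^3X$, choosing the solution that is bounded because $F$ is bounded; this gives $a=\mathcal I\ell$, an integral of $\ell$ rather than of the $y_2=0$ trace. Then $\ell_Y=\mathcal T\ell+\tfrac12$ holds pointwise, and the classical chain rule integrates it along $Y'\mapsto(X+\tfrac{Y-Y'}2,Y')$. The same integrated identity holds for $\ell(\cdot,\cdot+\delta)$. By linearity, the difference quotients $Q_\delta=(\ell(\cdot,\cdot+\delta)-\ell)/\delta$ therefore satisfy \eqref{eq:mild} exactly, since the forcing $\tfrac12$ cancels. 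Because $g=\ell_Y$ is Lipschitz, $Q_\delta\to g$ uniformly. Because $\mathcal K$ is bounded in the supremum norm on $X\geq X_0>0$ and uses only values at $t\geq X$, \eqref{eq:mild} passes to the limit.

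An alternative repair also works. Dominated convergence gives $\partial_Y(\mathcal K\ell)=\mathcal K\ell_Y$, which is continuous. A Schwarz-type theorem on mixed partials in the directions $(-\tfrac12,1)$ and $e_Y$ then shows that $g$ has the classical derivative $\mathcal Kg$ along every characteristic. Either route closes the gap; Step 4 as written does not.
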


\begin{proof}
For $y_1\le y_3$ the characteristic through $y$ leaves the orthant at $(y_1+y_2,0,y_3+y_2)$ and at $(0,y_1+y_2,y_3-y_1)$, and the two-point problem for $\partial_s^2F=F$ gives~\cite[(3.8)]{calder2026explicit}: with $X=y_1+y_2$ and $Y=y_3-y_1$,
\begin{equation}\label{eq:interp}
F(y)=\frac{\sinh y_1}{\sinh X}\,a(X,Y)+\frac{\sinh y_2}{\sinh X}\,\ell(X,Y).
\end{equation}
The first two conditions of \eqref{eq:faces} are those of~\cite[(3.6)]{calder2026explicit}, so the derivation of~\cite[(3.10)]{calder2026explicit} applies unchanged and yields
\[
a_X=2\coth X\,a-2\csch X\,\ell,\qquad
\ell_X-2\ell_Y=2\coth X\,\ell-2\csch X\,a-1 .
\]
As in~\cite[\S3.2]{calder2026explicit}, the first equation integrates, since $(a/\sinh^2X)'=-2\ell/\sinh^3X$ and $a$ is bounded, to $a=\mathcal I\ell$, and the second then reads $\ell_Y=\mathcal T\ell+\tfrac12$, pointwise on $X>0$, $Y\geq0$, because $\ell$ is $C^1$. Integrating along the characteristics $s\mapsto(X+\tfrac{Y-s}2,s)$ gives the integrated form of $\ell_Y=\mathcal T\ell+\tfrac12$, and the same holds for $\ell(\cdot,\cdot+\delta)$, $\delta>0$. Hence the difference quotients $Q_\delta=(\ell(\cdot,\cdot+\delta)-\ell)/\delta$ satisfy \eqref{eq:mild} exactly, the constant forcing having dropped out. The function $g=\ell_Y$ is bounded and Lipschitz, since $F$ is $C^{1,1}$ with bounded first derivatives, so $Q_\delta\to g$ uniformly as $\delta\downarrow0$. On $X\geq X_0>0$ the operator $\mathcal K$ is bounded in the supremum norm and uses only values at $t\geq X$, and the characteristics stay in $X\geq X_0$, so \eqref{eq:mild} passes to the limit. Finally, the third condition in \eqref{eq:faces} gives \eqref{eq:gdata}.
\end{proof}

Observe that nothing in Lemma~\ref{lem:transport} or its proof depends on $q$ except the scalar $\alpha$; together with $\beta$, which fixed $e$, these two numbers carry the entire $q$-dependence. This is why a single computation covers every $n\ge5$, with five experts appearing as the boundary case $q=2$.

Lemma~\ref{lem:kernel} shows that if we start the transport equation $g_Y = \mathcal Tg$ from data $\psi$ on the axis $Y = 0$, we can deduce the solution on the corner $X = 0$ for small $Y > 0$, up to first order terms in $Y$. Its main idea is that this corner value is given by an exact integral kernel applied to the data, and that kernel behaves like an approximate identity whose first moment is $3Y/2$.

\begin{lemma}\label{lem:kernel}
Let $\psi$ be bounded and continuous on $[0,\infty)$, and let $g$ be a bounded continuous function on $[0,\infty)^2$ with $g(X,0)=\psi(X)$ that solves \eqref{eq:mild}. Then its trace at $X=0$ is given, with $r=Y/2>0$, by
\begin{equation}\label{eq:kernel}
g(0,Y)=6\sinh^2r\int_r^\infty\frac{\sinh(t-r)}{\sinh^4t}\,\psi(t)\,dt .
\end{equation}
If $\psi(t)=\psi_0+\psi_1t+O(t^2)$ at zero, then $g(0,Y)=\psi_0+\tfrac32\psi_1Y+O(Y^2\log(1/Y))$.
\end{lemma}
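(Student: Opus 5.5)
The plan is to obtain \eqref{eq:kernel} from an explicit solution formula for the transport problem, and then to read off the expansion from the moments of the kernel. Write $g(X,Y)=\int_0^\infty P_Y(X,t)\,\psi(t)\,dt$, where $P_Y$ is the kernel of the propagator $S_2(Y)$ of~\cite[Lemma~3.3]{calder2026explicit} for $\mathcal T=\tfrac12\partial_X+\mathcal K$.

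\textbf{Step 1: the kernel satisfies the transport equation.} Since the paper says the propagator formula is verified directly, I would not quote it. I would instead check that the candidate kernel solves the equation. Substituting $P_Y$ into $\partial_Y P=\tfrac12\partial_XP+\mathcal K_XP$ is a computation with $\sinh$ and $\coth$. The operator $\mathcal I$ in \eqref{eq:ops} integrates against $\sinh^{-3}$ from $X$ to $\infty$, so it preserves the class of kernels supported on $t\ge X+Y/2$. This support is consistent with the characteristics $X+\tfrac{Y-Y'}2$ in \eqref{eq:mild}. I would also check that $P_Y\to\delta_X$ as $Y\downarrow0$, so that the initial data are attained. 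A convenient route is to verify the formula on the exponential family $\psi(t)=e^{-\lambda t}$, or on $\psi=\sinh^{k}$ type test data, where $\mathcal I$ can be computed in closed form. Linearity and density of these test data then give the general kernel.

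\textbf{Step 2: uniqueness.} I would show that a bounded continuous solution of \eqref{eq:mild} with given data is unique. Take the difference of two solutions. On $X\ge X_0$, $\mathcal K$ is bounded in sup norm and only looks at $t\ge X$, so a Gronwall argument along the characteristics, which move to larger $X$, gives zero there. Then let $X_0\downarrow0$, and pass to $X=0$ by continuity of $g$. The main obstacle is near $X=0$, where $\coth X$ and $\csch X$ blow up. I expect the cancellation $\mathcal If(X)\approx f(X)+O(X)$ for Lipschitz-type $f$ to make $\mathcal Kf$ bounded. If that fails, the argument only needs uniqueness on $X\ge X_0$ for every $X_0>0$, plus continuity of both $g$ and the explicit formula up to $X=0$. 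Evaluating the explicit solution at $X=0$ with $r=Y/2$ then yields \eqref{eq:kernel}.

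\textbf{Step 3: moments of the kernel.} Let $k_r(t)=6\sinh^2r\,\sinh(t-r)/\sinh^4t$ on $t>r$, so that $g(0,Y)=\int_r^\infty k_r\psi\,dt$.
\begin{itemize}
\item Mass: exact integration should give $\int_r^\infty k_r=1$. The small-$r$ model $6r^2(t-r)/t^4$ already integrates to $6(\tfrac12-\tfrac13)=1$, and the exact identity follows from the antiderivative of $\sinh(t-r)/\sinh^4t$, expanding $\sinh(t-r)=\sinh t\cosh r-\cosh t\sinh r$.
\item First moment: $\int_r^\infty t\,k_r\,dt=3r+O(r^2\log(1/r))$. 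The model gives $6r^2(1/r-1/(2r))=3r$.
\item Second moment: the weight $\int_r^1 t^2k_r\,dt$ is $O(r^2\log(1/r))$, while the tail $t\ge1$ is $O(r^2)$ because $k_r=O(r^2e^{-3t})$ there.
\end{itemize}

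\textbf{Step 4: the expansion.} Write $\psi(t)=\psi_0+\psi_1t+R(t)$ with $|R(t)|\le C\min(t^2,1)$, using that $\psi$ is bounded. Steps 1 and 3 then give
\[
g(0,Y)=\psi_0+\tfrac32\psi_1Y+O\bigl(Y^2\log(1/Y)\bigr).
\]
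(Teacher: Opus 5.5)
Your outline follows the paper's proof: construct $S_2(Y)\psi$ explicitly and check it against \eqref{eq:mild}, prove uniqueness by Gronwall on $X\geq\delta$ for each $\delta>0$, let $X\downarrow0$, and expand through the moments of the kernel. Your fallback in Step 2 is exactly what the paper uses; no bound on $\mathcal K$ near $X=0$ is needed. Two points need attention.

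First, the mass of the kernel is not $1$. The model $6r^2(t-r)/t^4$ integrates to $1$, but the true kernel does not. Expanding $\sinh(t-r)=\sinh t\cosh r-\cosh t\sinh r$ and using $\int_r^\infty\cosh t\,\sinh^{-4}t\,dt=\tfrac13\sinh^{-3}r$ and $\int_r^\infty\sinh^{-3}t\,dt=\tfrac12\csch r\coth r-\tfrac12\log\coth\tfrac r2$ gives
\[
\int_r^\infty k_r\,dt=1+3\sinh^2r\Bigl(1-\cosh r\,\log\coth\frac r2\Bigr)=1-3r^2\log\frac1r+O(r^2).
\]
This is consistent with $\mathcal K1=-\sinh X\log\coth(X/2)\neq0$, so the flow does not preserve constants. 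Your expansion survives, because the defect is $O(Y^2\log(1/Y))$; this is the paper's statement $\int K_r=1+O(r^2\log(1/r))$. Note, though, that the mass defect is itself a source of the logarithm in the error term, not only the second moment.

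Second, Step 1 carries the whole content of the lemma and is left as a plan. For $X>0$ the kernel of $S_2(Y)$ is not a density. By \eqref{eq:semigroup} it has an atom of mass $\Phi_L(r)^2$ at $t=L=X+Y/2$. This atom is what gives $S_2(0)\psi=\psi$, and it disappears at $X=0$ only because $\Phi_r(r)=0$. So the notation $\int P_Y\psi\,dt$, and any pointwise check of the equation for $P_Y$, must account for it.

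Your test-data route also does not produce closed forms. Even for $\psi=e^{-\lambda t}$, the solution involves incomplete integrals against $\sinh^{-3}t$ and $\cosh t\,\sinh^{-4}t$, and you would then need $\mathcal I$ of that function of $X$. Moreover, exponentials are not uniformly dense in bounded continuous functions, so you would have to pass to the limit under bounded, locally uniform convergence, with dominated convergence in both \eqref{eq:mild} and the formula.

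The paper needs neither closed forms nor density. Here $\psi$ enters only through $\psi(L)$, $J_3(L)=\int_L^\infty\psi\,\sinh^{-3}t\,dt$ and $J_4(L)=\int_L^\infty\psi\cosh t\,\sinh^{-4}t\,dt$, all of which are constant along characteristics. The identity $\mathcal I[S_2(Y)\psi]=p_3J_3+p_4J_4$ follows by computing the $X$-derivative of $(p_3J_3+p_4J_4)/\sinh^2X$. Then $\tfrac12\partial_rS_2(Y)\psi=\mathcal K[S_2(Y)\psi]$ reduces to hyperbolic identities in $(L,r)$, valid for every bounded continuous $\psi$.
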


\begin{proof}
We first construct the solution and its kernel. Put $L=X+Y/2$, $r=Y/2$, and let $\Phi_t(s)=\sinh(t-s)/\sinh t$ for $0\le s\le t$, extended by zero for $s>t$, be the truncated mode~\cite[(2.13)]{calder2026explicit}. Define, as in~\cite[(3.31)--(3.32)]{calder2026explicit} with $m=2$,
\begin{equation}\label{eq:semigroup}
(S_2(Y)\psi)(X)=\Phi_L(r)^2\psi(L)
+\int_L^\infty\frac{2}{\sinh^3t}\bigl[2\sinh L\sinh r\,\Phi_t(r)
+\sinh^2r\,\Phi_t(L)\bigr]\psi(t)\,dt .
\end{equation}
For $t>L$ the density separates as $p_1\sinh^{-3}t+p_2\cosh t\sinh^{-4}t$, with coefficients $p_1=2\sinh r\,(2\sinh L\cosh r+\sinh r\cosh L)$ and $p_2=-6\sinh L\sinh^2r$ that depend only on $L$ and $r$. Hence
\[
(S_2(Y)\psi)(X)=\Phi_L(r)^2\psi(L)+p_1J_3(L)+p_2J_4(L),
\]
where $J_3(L)=\int_L^\infty\psi(t)\sinh^{-3}t\,dt$ and $J_4(L)=\int_L^\infty\psi(t)\cosh t\,\sinh^{-4}t\,dt$, and $S_2(Y)\psi$ is bounded on $X\ge\delta$ for every $\delta>0$ on each finite $Y$-interval. We verify directly that it solves \eqref{eq:mild}. Write $X=L-r$, $p_3=2\sinh L\,(\sinh L\cosh r+2\cosh L\sinh r)$ and $p_4=-6\sinh^2L\sinh r$. The identities
\[
\begin{gathered}
(\partial_Lp_3)\sinh X-2p_3\cosh X=-2p_1,\qquad
(\partial_Lp_4)\sinh X-2p_4\cosh X=-2p_2,\\
p_3\sinh L+p_4\cosh L=2\sinh X\sinh^2L
\end{gathered}
\]
show that $(p_3J_3+p_4J_4)/\sinh^2X$ has derivative $-2S_2(Y)\psi/\sinh^3X$ in $X$ at fixed $Y$, and it tends to zero as $X\to\infty$, so that $\mathcal I[S_2(Y)\psi]=p_3J_3(L)+p_4J_4(L)$. Along a characteristic $L$ is constant and $r$ increases at rate $\tfrac12$, and the identities
\[
\begin{gathered}
\partial_r\bigl(\Phi_L(r)^2\bigr)\sinh X=-2\Phi_L(r)^2\cosh X,\\
(\partial_rp_1)\sinh X=2(p_3-p_1\cosh X),\qquad
(\partial_rp_2)\sinh X=2(p_4-p_2\cosh X)
\end{gathered}
\]
give $\tfrac12\partial_rS_2(Y)\psi=-\coth X\,S_2(Y)\psi+\csch X\,\mathcal I[S_2(Y)\psi]=\mathcal K[S_2(Y)\psi]$. All six identities follow from the addition formulas for $\sinh(L-r)$ and $\cosh(L-r)$. Since $S_2(0)\psi=\psi$, the function $S_2(Y)\psi$ solves \eqref{eq:mild}. No approximation of the data is needed, since $\psi$ enters only through $\psi(L)$, $J_3$ and $J_4$; the atom is $G'(L)$ and the density $G''-G$ for $G(t)=\sinh t\,\Phi_t(L)\Phi_t(r)^2$, which is how~\cite{calder2026explicit} obtains \eqref{eq:semigroup} from its truncated modes.

Next we prove uniqueness. Fix $\delta>0$. On $X\ge\delta$ the operator $\mathcal K$ is bounded, $\|\mathcal K\|\le2\coth\delta$ (from $\int_X^\infty\csch^3t\,dt\le\tfrac12\csch^2X$), and uses only values at $t\ge X$. The characteristics in \eqref{eq:mild} stay in $X\ge\delta$, so Gronwall's inequality, applied to the difference of two bounded solutions of \eqref{eq:mild} with the same data, shows that they agree on $X\ge\delta$. Hence the given solution equals $S_2(Y)\psi$ for all $X>0$.

We now compute the corner trace. Let $X\downarrow0$ in \eqref{eq:semigroup}: $L\to r$, the atom vanishes since $\Phi_r(r)=0$, and the bracket collapses to $3\sinh^2r\,\Phi_t(r)$, giving \eqref{eq:kernel}. The limit is the trace $g(0,Y)$ because $g$ is continuous.

Finally, we expand the corner trace. Write $K_r(t)$ for the density in \eqref{eq:kernel}. On $r\le t\le1$, the bounds $x\leq\sinh x\leq x+x^3/4$ for $0\leq x\leq1$ give $K_r(t)=6r^2(t-r)t^{-4}(1+O(t^2))$, and the rescaled density $6(x-1)x^{-4}$ on $[1,\infty)$ has mass $1$ and first moment $3$; beyond $t=1$ the density is $O(r^2e^{-3t})$. Hence $\int_r^\infty K_r=1+O(r^2\log(1/r))$, $\int tK_r=3r+O(r^2)$ and $\int t^2K_r=O(r^2\log(1/r))$, and inserting the expansion of $\psi$ with $r=Y/2$ proves the claim.
\end{proof}

We can now compute the expansion of $u_n$ along the three-leader ray.

\begin{proposition}\label{prop:ray}
Suppose that $A$ satisfies \eqref{eq:fixed}. Then $f_3(s)=u_n(s,s,s,0,\dots,0)$ satisfies, as $s\downarrow0$,
\begin{equation}\label{eq:f3}
f_3(s)=u_n(0)+\frac3n\,s+\frac{3q}{2(q+1)}\,u_n(0)\,s^2+o(s^2).
\end{equation}
\end{proposition}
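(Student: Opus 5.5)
We reduce $f_3$ to the corner trace of the transported function $g=\ell_Y$ and then read off its expansion from Lemma~\ref{lem:kernel}. At the point $x=(s,s,s,0,\dots,0)$ with $s>0$, the bottom $q$ coordinates are tied and the scaled gaps are $y_1=y_2=0$, $y_3=\kappa s$. In the slice coordinates of Section~\ref{sec:slice} this gives
\[
 f_3(s)=s+\frac1\kappa F(0,0,\kappa s)=s+\frac1\kappa\,\ell(0,\kappa s),
\]
where $\ell(X,Y)=F(0,X,Y)$ is the trace from Section~\ref{sec:transfer}. Since $F$ is $C^{1}$ up to the boundary of the orthant with Lipschitz first derivatives, $Y\mapsto\ell(0,Y)$ is $C^1$ with derivative $g(0,Y)$, where $g=\ell_Y$. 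The fundamental theorem of calculus then gives
\[
 \ell(0,Y)=\ell(0,0)+\int_0^Yg(0,Y')\,dY',\qquad \ell(0,0)=F(0,0,0)=\kappa u_n(0).
\]
It therefore suffices to expand the corner trace $g(0,Y)$ to first order in $Y$.

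For this we combine Lemma~\ref{lem:transport} with Lemma~\ref{lem:kernel}. By Lemma~\ref{lem:transport}, $g$ is bounded and Lipschitz, hence continuous, on $[0,\infty)^2$. It solves \eqref{eq:mild} with data $\psi=\alpha e'$ given by \eqref{eq:gdata}, and $\psi$ is bounded and continuous by Lemma~\ref{lem:trace}. So the hypotheses of Lemma~\ref{lem:kernel} hold. By \eqref{eq:e0},
\[
 \psi(t)=\psi_0+\psi_1t+O(t^2),\qquad \psi_0=\alpha e'(0),\quad \psi_1=\alpha e(0),
\]
and Lemma~\ref{lem:kernel} yields
\[
 g(0,Y)=\alpha e'(0)+\tfrac32\alpha e(0)\,Y+O\bigl(Y^2\log(1/Y)\bigr).
\]
Integrating from $0$ to $Y$, and noting that the error integrates to $O(Y^3\log(1/Y))=o(Y^2)$, we obtain
\[
 \ell(0,Y)=\kappa u_n(0)+\alpha e'(0)\,Y+\tfrac34\alpha e(0)\,Y^2+o(Y^2).
\]

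It remains to substitute $Y=\kappa s$ and the constants from Lemma~\ref{lem:trace}: $e(0)=\kappa u_n(0)$, $e'(0)=-(q+1)/(q+3)$ and $\alpha=q/(q+1)$. The linear coefficient is
\[
 1+\alpha e'(0)=1-\frac{q}{q+3}=\frac3{q+3}=\frac3n .
\]
Using $\kappa^2=2$, the quadratic coefficient is
\[
 \tfrac34\alpha\,\kappa\,e(0)=\tfrac34\alpha\,\kappa^2u_n(0)=\tfrac32\alpha\,u_n(0)=\frac{3q}{2(q+1)}\,u_n(0),
\]
which is \eqref{eq:f3}.

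The one delicate point is the use of Lemma~\ref{lem:kernel}. It requires $g$ to be continuous up to the corner $X=0$, so that the limit $X\downarrow0$ in \eqref{eq:semigroup} really is the trace $g(0,Y)$. It also requires the data expansion to be genuinely first order with an $O(t^2)$ remainder. The first is supplied by the Lipschitz bound in Lemma~\ref{lem:transport}. The second is exactly \eqref{eq:e0}, whose proof used only the $C^{1,1}$ regularity of $u_n$ and not any second-order expansion. With these in hand, the rest is bookkeeping of constants.
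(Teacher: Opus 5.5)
Your proposal is correct and is essentially the paper's proof. You apply Lemma~\ref{lem:kernel} to $g=\ell_Y$ with data $\psi=\alpha e'$ from Lemma~\ref{lem:transport} and the expansion \eqref{eq:e0}, integrate the corner expansion in $Y$ using that $\ell$ is $C^1$ (the $O(Y^2\log(1/Y))$ error becoming $o(Y^2)$), and substitute $Y=\kappa s$ with $e(0)=\kappa u_n(0)$, $\kappa^2=2$ and $1+\alpha e'(0)=3/n$, exactly as the paper does.
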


\begin{proof}
By Lemma~\ref{lem:transport}, we may apply Lemma~\ref{lem:kernel} to $g=\ell_Y$ with $\psi=\alpha e'$, which is bounded and continuous on $[0,\infty)$ by Lemma~\ref{lem:trace}, with $\psi_0=\alpha e'(0)$ and $\psi_1=\alpha e(0)$ by \eqref{eq:e0}. Integrating the resulting expansion of $\ell_Y(0,Y)$ in $Y$, legitimate because $\ell$ is $C^1$,
\[
\ell(0,Y)=e(0)+\alpha e'(0)\,Y+\frac34\alpha e(0)\,Y^2+O(Y^3\log(1/Y)).
\]
On the three-leader ray we have $(y_1,y_2,y_3)=(0,0,\kappa s)$, so $f_3(s)=s+\ell(0,\kappa s)/\kappa$. With $e(0)=\kappa u_n(0)$, $\kappa^2=2$ and $1+\alpha e'(0)=3/(q+3)=3/n$, this is \eqref{eq:f3}.
\end{proof}

For $n=6$ the quadratic coefficient is $9u_6(0)/8$. The linear coefficient $3/n$ is also the value forced independently by permutation symmetry and $\nabla u_n\cdot\one=1$, which checks \eqref{eq:faces} and the normalization of \eqref{eq:eode}.

\begin{remark}[What a Hessian at the origin would give]\label{rem:hessian}
If $u_n$ were twice differentiable at the origin, symmetry and $D^2u_n(0)\one=0$ would force $D^2u_n(0)=\lambda_n(I-\tfrac1n\one\one^T)$, hence $D_{\v}^2u_n(0)=\lambda_n(k-k^2/n)$ for a direction with $k$ ones. Normalizing by the equation for $A$ ($k=2$) gives $\lambda_n=nu_n(0)/(n-2)$ and, for $C=(1,1,1,0,\dots,0)$, $\tfrac12f_3''(0)=\tfrac{3q}{2(q+1)}u_n(0)$, exactly the coefficient in \eqref{eq:f3}. Since $3-9/n>2-4/n$ precisely when $n\ge6$, a three-one direction would beat $A$ at the origin. The content of Sections~\ref{sec:scalar_trace} and~\ref{sec:transfer} is that the same coefficient follows from the trace equations and the $C^{1,1}$ bound alone, with no Hessian at the origin or on either collision ray.
\end{remark}

\subsection{The contradiction on the three-leader ray}\label{sec:contradiction}

We now compare \eqref{eq:f3} with the Bellman inequality for the direction of the ray, which proves Theorem~\ref{thm:nonexistence}.

\begin{proof}[Proof of Theorem~\ref{thm:nonexistence}]
Let $n\geq6$. Complementing if necessary, which is legitimate by the discussion after Lemma~\ref{lem:lines}, we may assume that the first bit is one. By Theorem~\ref{thm:exclude}, only $A=1010\cdots0$ can then satisfy \eqref{eq:fixed}, so suppose, for contradiction, that it does. Then Proposition~\ref{prop:ray} gives \eqref{eq:f3}. The direction $C=(1,1,1,0,\dots,0)$ is the direction of the ray, and $u_n$ is a viscosity supersolution of $u_n-\tfrac12D_C^2u_n\geq\phi_n$, so Lemma~\ref{lem:lines}(iii), applied along the ray with $h=s$, gives
\begin{equation}\label{eq:bellman-ray}
f_3(2s)+f_3(0)-2f_3(s)\le2s^2\max_{0\leq t\leq2s}\bigl(f_3(t)-t\bigr)=2s^2\bigl(u_n(0)+O(s)\bigr),
\end{equation}
because $\phi_n=t$ on the ray and $u_n-\phi_n$ is Lipschitz. By \eqref{eq:f3}, the left side is $\tfrac{3q}{q+1}u_n(0)s^2+o(s^2)$, so subtracting leaves
\[
\Bigl(\frac{3q}{q+1}-2\Bigr)u_n(0)\,s^2=\frac{q-2}{q+1}\,u_n(0)\,s^2\le o(s^2),
\]
which is impossible for small $s>0$, because $q\ge3$ and $u_n(0)>0$. This excludes $1010\cdots0$ for every $n\ge6$.
\end{proof}

\begin{remark}[The boundary case $q=2$]\label{rem:q2}
For five experts the coefficient in \eqref{eq:f3} is $\tfrac{3\cdot2}{2\cdot3}=1$, so \eqref{eq:bellman-ray} is saturated at order $s^2$ rather than violated, consistent with the optimality of $10100$ in~\cite{calder2026explicit}. Indeed, the explicit expansion~\cite[(2.18)]{calder2026explicit} on the ray $(s,s,s,0,0)$ gives $f_3(s)=u_5(0)+\tfrac35s+u_5(0)s^2+o(s^2)$, matching \eqref{eq:f3} in both coefficients. Since that expansion is derived from the closed-form five-expert solution and not from the trace equations, it is an independent check of the chain \eqref{eq:eode}--\eqref{eq:f3}.
\end{remark}

Everything above uses only the $C^{1,1}$ regularity of the true Bellman value from Section~\ref{sec:basic}, together with the explicit form of \eqref{eq:fixed} on segments and the second-difference inequality \eqref{eq:seconddiff} of Lemma~\ref{lem:lines}. The derivative of the trace $e$ acquired a first-order expansion through its scalar ODE, whose singular solution in \eqref{eq:eta} is excluded by the Lipschitz bound; the transfer kernel then produced the quadratic expansion of a different trace. No Hessian of $u_n$ at the origin or on either collision ray was assumed, so the argument does not settle whether $u_n$ is $C^2$.
\section{The finite-horizon problem}\label{sec:finite}
This section proves Theorem~\ref{thm:finite}. The geometric nonexistence theorem implies its finite-horizon analogue by a short transfer argument. There is no need to rederive the one-leader asymptotic or the trace contradiction of Section~\ref{sec:last} for the parabolic equation. The transfer is applied to the linear equations attached to a hypothetical common optimizer, and therefore does not assert that a Laplace transform commutes with the nonlinear Bellman Hamiltonian.

\subsection{The finite-horizon value}
Let $U_n(\tau,x)$ be the value of \eqref{eq:parabolic}, with time to maturity $\tau$. We use its control representation
\begin{equation}\label{eq:parcontrol}
 U_n(\tau,x)=\sup_\alpha\E\,\phi_n\!\left(
   x+\int_0^\tau\alpha_s\,dW_s\right),
\end{equation}
the supremum being over progressively measurable $\alpha$ with values in $\{0,1\}^n$ and $W$ a scalar Brownian motion, as in Remark~\ref{rem:finitegeometric}. We write $X^\alpha_\tau=x+\int_0^\tau\alpha_s\,dW_s$ for the controlled state. The diffusion matrix $\alpha\alpha^T$ is exactly the rank one matrix $\v\v^T$ appearing in the Hamiltonian.

\begin{lemma}\label{lem:pargrowth}
For every $\tau\geq0$, the function $U_n(\tau,\cdot)$ is convex, permutation invariant, translation equivariant, and $1$-Lipschitz for the supremum norm, and $U_n$ is nondecreasing in $\tau$. Moreover, for all $\tau\geq0$ and $x\in\R^n$,
\begin{equation}\label{eq:pargrowth}
 0\leq U_n(\tau,x)-\phi_n(x)\leq n\sqrt\tau ,
\end{equation}
and for every $\lambda>0$,
\begin{equation}\label{eq:parscaling}
 U_n(\lambda^2\tau,\lambda x)=\lambda\,U_n(\tau,x),
 \qquad\text{equivalently}\qquad
 U_n(\tau,x)=\sqrt\tau\,\Theta\!\left(x/\sqrt\tau\right),
 \quad \Theta:=U_n(1,\cdot).
\end{equation}
\end{lemma}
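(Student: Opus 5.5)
All properties will be read off directly from the control representation \eqref{eq:parcontrol}. For fixed $\alpha$, the map $x\mapsto\E\phi_n(x+\int_0^\tau\alpha_s\,dW_s)$ inherits the relevant structure of $\phi_n$, and each such structure survives the supremum over $\alpha$.

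\emph{Convexity.} $\phi_n$ is convex, so each map $x\mapsto\E\phi_n(x+\int_0^\tau\alpha_s\,dW_s)$ is convex. A supremum of convex functions is convex.

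\emph{Translation equivariance.} $\phi_n(y+a\one)=\phi_n(y)+a$ holds pointwise, and the supremum preserves it.

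\emph{Lipschitz bound.} $\phi_n$ is $1$-Lipschitz for the supremum norm. Hence $|\E\phi_n(x+\xi)-\E\phi_n(x'+\xi)|\leq\|x-x'\|_\infty$ for every random $\xi$, and a supremum of such functions stays $1$-Lipschitz.

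\emph{Permutation invariance.} For a permutation matrix $P$, $\phi_n(Py)=\phi_n(y)$. The map $\alpha\mapsto P\alpha$ is a bijection of the admissible controls (values in $\{0,1\}^n$, progressive measurability preserved). Therefore $U_n(\tau,Px)=U_n(\tau,x)$.

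\emph{Scaling \eqref{eq:parscaling}.} I will use Brownian scaling $\widetilde W_t=\lambda^{-1}W_{\lambda^2t}$. Given a control $\alpha$ on $[0,\lambda^2\tau]$, set $\tilde\alpha_t=\alpha_{\lambda^2t}$, which is progressive for the time-changed filtration. Then
\[
\int_0^{\lambda^2\tau}\alpha_s\,dW_s=\lambda\int_0^\tau\tilde\alpha_t\,d\widetilde W_t .
\]
This gives $\lambda x+\int_0^{\lambda^2\tau}\alpha\,dW=\lambda\bigl(x+\int_0^\tau\tilde\alpha\,d\widetilde W\bigr)$. Since $\phi_n$ is positively $1$-homogeneous, $U_n(\lambda^2\tau,\lambda x)=\lambda U_n(\tau,x)$ follows, because the correspondence is a bijection of controls. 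Taking $\lambda=\sqrt\tau$ and evaluating at time $1$ gives the second form with $\Theta=U_n(1,\cdot)$.

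\emph{Growth \eqref{eq:pargrowth}.} For the lower bound, take $\alpha\equiv0$, which gives $U_n\geq\phi_n$. Alternatively, Jensen gives $\E\phi_n(X_\tau^\alpha)\geq\phi_n(\E X_\tau^\alpha)=\phi_n(x)$, since the stochastic integral is a true martingale with bounded integrand. For the upper bound, write $M^i=\int_0^\tau\alpha^i\,dW$. Then
\[
\phi_n(x+M)\leq\phi_n(x)+\max_iM^i\leq\phi_n(x)+\sum_i|M^i|,
\]
and $\E|M^i|\leq(\E(M^i)^2)^{1/2}\leq\sqrt\tau$ by the It\^o isometry with $|\alpha^i|\leq1$. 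This yields the bound $n\sqrt\tau$.

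\emph{Monotonicity in $\tau$.} For $\tau'>\tau$, extend any control on $[0,\tau]$ by zero on $(\tau,\tau']$. The terminal state is unchanged, so $U_n(\tau',x)\geq U_n(\tau,x)$. (Equivalently, continuing with any control makes $X$ a martingale, and Jensen gives the same conclusion since $\phi_n$ is convex.)

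The only delicate point is the scaling step. There one must check that the rescaled control is admissible for the rescaled Brownian motion, and that the supremum in \eqref{eq:parcontrol} does not depend on the choice of filtered probability space. This is standard: the value depends only on the law of $(\alpha,W)$, or one may restrict to controls adapted to the Brownian filtration. I expect everything else to be routine.
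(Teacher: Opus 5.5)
Your proposal is correct and follows essentially the same route as the paper. Every property is read off from the control representation \eqref{eq:parcontrol}: monotonicity comes from extending controls by zero, the upper bound in \eqref{eq:pargrowth} from $\E|M^i_\tau|\leq\sqrt\tau$ together with the sup-norm Lipschitz property of $\phi_n$, and the scaling from Brownian scaling, the bijection $\alpha\mapsto\alpha_{\lambda^2\cdot}$ of controls, and the $1$-homogeneity of $\phi_n$. The only differences are cosmetic: you obtain $U_n\geq\phi_n$ directly from the control $\alpha\equiv0$ rather than via monotonicity, and you take $\lambda=\sqrt\tau$ at time $1$ instead of $\lambda=1/\sqrt\tau$.
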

\begin{proof}
The first statements follow from \eqref{eq:parcontrol}: each $x\mapsto\E\,\phi_n(X^\alpha_\tau)$ is convex, translation equivariant and $1$-Lipschitz because $\phi_n$ is, permuting coordinates maps admissible controls to admissible controls, and $U_n$ is nondecreasing in $\tau$ because a control may be extended by $\alpha\equiv0$. The lower bound in \eqref{eq:pargrowth} is this monotonicity. For the upper bound, write $M_\tau^i=\int_0^\tau\alpha^i_s\,dW_s$. Each $M^i$ is a martingale with $\langle M^i\rangle_\tau\leq\tau$, so $\E|M^i_\tau|\leq\sqrt\tau$. Since $\phi_n$ is $1$-Lipschitz for the supremum norm,
\[
 \E\phi_n(x+M_\tau)-\phi_n(x)\leq\E\max_i|M^i_\tau|
   \leq\sum_{i=1}^n\E|M^i_\tau|\leq n\sqrt\tau ,
\]
uniformly in $x$ and in $\alpha$. For \eqref{eq:parscaling}, substitute $s=\lambda^2r$ in \eqref{eq:parcontrol} and use that $\widetilde W_r=\lambda^{-1}W_{\lambda^2r}$ is a Brownian motion, so that
\[
 \int_0^{\lambda^2\tau}\alpha_s\,dW_s
  =\lambda\int_0^{\tau}\alpha_{\lambda^2r}\,d\widetilde W_r .
\]
The correspondence $\alpha\mapsto\alpha_{\lambda^2\cdot}$ is a bijection of admissible controls, so, by the degree one homogeneity of $\phi_n$,
\[
 U_n(\lambda^2\tau,\lambda x)
  =\sup_{\widetilde\alpha}\E\,\phi_n\!\left(\lambda\Bigl(
      x+\int_0^\tau\widetilde\alpha_r\,d\widetilde W_r\Bigr)\right)
  =\lambda\,U_n(\tau,x).
\]
Taking $\lambda=1/\sqrt\tau$ gives the self-similar form.
\end{proof}

We next record the parabolic counterpart of \eqref{eq:linearineq}. As in Section~\ref{sec:basic}, no equivalence theorem is needed: the dynamic programming principle gives the distributional inequality directly.

\begin{lemma}\label{lem:parineq}
For every $\v'\in\{0,1\}^n$,
\begin{equation}\label{eq:parlinearineq}
 \partial_\tau U_n-\frac12D_{\v'}^2U_n\geq0
 \qquad\text{in }\mathcal D'\bigl((0,\infty)\times\R^n\bigr).
\end{equation}
\end{lemma}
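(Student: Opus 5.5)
The plan is to derive \eqref{eq:parlinearineq} from the dynamic programming principle for the control representation \eqref{eq:parcontrol}, using the constant control $\alpha\equiv\v'$ on a short initial interval, and then to pass to the distributional limit exactly as in the proof of Proposition~\ref{prop:hessian}. For $\v'=0$ the claim is that $U_n$ is nondecreasing in $\tau$, which Lemma~\ref{lem:pargrowth} already gives. So fix $\v'\neq0$.

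\emph{Step 1: a pointwise inequality from dynamic programming.} Take $h>0$ and $\tau\geq0$. Play the constant control $\v'$ for time $h$, and after that play an $\eps$-optimal control for horizon $\tau$ started at the current state. The state at time $h$ is $x+W_h\v'$, and $W_h$ is independent of the later Brownian increments. Conditioning on $W_h$ and letting $\eps\downarrow0$ therefore gives
\[
 U_n(\tau+h,x)\geq\E\,U_n(\tau,x+W_h\v').
\]
A measurable $\eps$-optimal selection is needed here. I expect to avoid it by a countable partition of the line in the variable $W_h$: use a single control on each cell, and rely on the $1$-Lipschitz bound from Lemma~\ref{lem:pargrowth}. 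Alternatively, one can quote the standard DPP for this bounded-coefficient control problem.

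\emph{Step 2: pass to distributions.} Take a nonnegative $\psi\in C_c^\infty((0,\infty)\times\R^n)$. Integrate Step~1 against $\psi(\tau+h,x)$, change variables in $\tau$ and in $x$ along $\v'$, and use that $W_h$ is Gaussian with variance $h$. The result is
\[
 \iint U_n(\tau,x)\,\frac{\psi(\tau,x)-\psi(\tau+h,x)}{h}\,d\tau\,dx\;\geq\;\iint U_n(\tau,x)\,\frac{\E\,\psi(\tau+h,x-W_h\v')-\psi(\tau+h,x)}{h}\,d\tau\,dx .
\]
Let $h\downarrow0$. A Taylor expansion of $\psi$ gives $\E\,\psi(\cdot,x-W_h\v')-\psi=\tfrac h2D_{\v'}^2\psi+O(h^2)$ uniformly, with the error supported in a fixed compact set. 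Since $U_n$ is locally bounded by \eqref{eq:pargrowth}, dominated convergence applies, and in the limit
\[
 -\iint U_n\,\partial_\tau\psi\geq\frac12\iint U_n\,D_{\v'}^2\psi ,
\]
which is \eqref{eq:parlinearineq}.

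The only genuine obstacle is making the DPP inequality in Step~1 rigorous without invoking heavy measurable-selection machinery. The partition argument based on the Lipschitz continuity of $U_n(\tau,\cdot)$ should handle it. Everything after that is a direct computation.
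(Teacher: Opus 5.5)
Your proposal follows the paper's proof. It uses the same one-slice dynamic programming inequality $U_n(\tau+h,x)\geq\E\,U_n(\tau,x+\v'W_h)$, moves the Gaussian average onto the test function in the same way, and takes the same limit $h\downarrow0$. Integrating against $\psi(\tau+h,\cdot)$ is just the paper's time shift done at the start.

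One justification in Step~2 is wrong, and it is the point the paper takes care over. The error
$R=\E\,\psi(\tau+h,\cdot-W_h\v')-\psi(\tau+h,\cdot)-\tfrac h2D_{\v'}^2\psi(\tau+h,\cdot)$
is \emph{not} supported in a fixed compact set. Since $W_h$ is Gaussian, the averaged function is positive along entire lines in the direction $\v'$ through the support of $\psi$. So local boundedness of $U_n$ does not justify dominated convergence.

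The repair is the one the paper makes. Use the linear growth $|U_n(\tau,y)|\leq C(1+|y|)$, which \eqref{eq:pargrowth} gives globally, and show $\int(1+|y|)\,|R(y)|\,dy=o(h)$ uniformly in $\tau$. To do this, split the expectation on $\{|W_h|\leq1\}$ and its complement. On $\{|W_h|\leq1\}$ the Taylor remainder lives in a fixed compact set and is $O(|W_h|^3)$. On the complement, the probability is at most $2e^{-1/(2h)}$ and each term has weighted norm $O(1+W_h^2)$.

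The measurability issue you flag in Step~1 is not where the paper spends effort: it simply asserts the dynamic programming inequality. Your partition argument, using the $1$-Lipschitz bound of Lemma~\ref{lem:pargrowth}, is a sound way to make that step rigorous.
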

\begin{proof}
Playing the constant control $\v'$ on a first time slice of length $h>0$ and then playing optimally  gives the dynamic programming inequality
\[
 U_n(\tau+h,x)\geq\E\,U_n(\tau,x+\v'W_h).
\]
Let $0\leq\psi\in C_c^\infty((0,\infty)\times\R^n)$ and write $\psi_\tau=\psi(\tau,\cdot)$. Translating the variable of integration,
\[
 \int_{\R^n}\E\,U_n(\tau,x+\v'W_h)\psi_\tau(x)\,dx
  =\int_{\R^n}U_n(\tau,y)\,\E\psi_\tau(y-\v'W_h)\,dy .
\]
Taylor expansion to third order, together with $\E W_h=0$, $\E W_h^2=h$ and $\E|W_h|^3=O(h^{3/2})$, gives $\E\psi_\tau(\cdot-\v'W_h)=\psi_\tau+\tfrac h2D_{\v'}^2\psi_\tau+R_{h,\tau}$ with
\[
 \int_{\R^n}(1+|y|)\,|R_{h,\tau}(y)|\,dy=O(h^{3/2})
\]
uniformly in $\tau$. Indeed, on the event $\{|W_h|\leq1\}$ the third-order Taylor remainder is $O(|W_h|^3)$ and supported in a fixed compact set, while on the complementary event, whose probability is at most $2e^{-1/(2h)}$, each term of the expansion has weighted norm $\int(1+|y|)|\cdot|\,dy$ at most $C(1+W_h^2)$. Since $|U_n(\tau,y)|\leq C(1+|y|)$ on the time support of $\psi$ by \eqref{eq:pargrowth}, integrating the dynamic programming inequality against $\psi$ gives
\[
 \frac1h\iint\bigl[U_n(\tau+h,x)-U_n(\tau,x)\bigr]\psi(\tau,x)\,dx\,d\tau
 \geq\frac12\iint U_n\,D_{\v'}^2\psi\,dy\,d\tau+O(h^{1/2}).
\]
Shifting the time variable in the first term, with $\psi$ extended by zero to $\tau\leq0$, the left side equals
\[
 \iint U_n(\tau,x)\,\frac{\psi(\tau-h,x)-\psi(\tau,x)}{h}\,dx\,d\tau ,
\]
which tends to $-\iint U_n\,\partial_\tau\psi$ as $h\downarrow0$ by dominated convergence and \eqref{eq:pargrowth}. This is \eqref{eq:parlinearineq}.
\end{proof}

We read \eqref{eq:parfixed} in the sense of distributions on $(0,\infty)\times\{x_1>\cdots>x_n\}$. For the constant-coefficient linear operator $\partial_\tau-\tfrac12D_{\v}^2$ this is equivalent to the viscosity formulation~\cite{ishii1995equivalence}, but the proofs below use only the distributional form.

\subsection{A common finite-horizon optimizer is a geometric optimizer}

The key to passing the result to the finite horizon problem is the following lemma, which shows that a single rank direction satisfying the finite-horizon equation \eqref{eq:parfixed} at every time to maturity also satisfies the geometric equation \eqref{eq:fixed}. 

\begin{lemma}\label{lem:transferparabolic}
Suppose $\v\in\{0,1\}^n$ satisfies \eqref{eq:parfixed}. Then
\begin{equation}\label{eq:laplace}
 \mathcal G_n(x):=\int_0^\infty e^{-\tau}U_n(\tau,x)\,d\tau
\end{equation}
is the geometric Bellman value $u_n$ of \eqref{eq:pde}, and $\v$ satisfies \eqref{eq:fixed}. No classical second derivatives of $U_n$ at collisions are assumed.
\end{lemma}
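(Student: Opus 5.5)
The plan is to show that $\mathcal G_n$ is a viscosity solution of \eqref{eq:pde} with at most linear growth, and that it satisfies the linear equation for $\v$ on the sector. Uniqueness via Lemma~\ref{lem:comparison} then gives $\mathcal G_n=u_n$, and \eqref{eq:fixed} is inherited. By \eqref{eq:pargrowth}, $\mathcal G_n$ is well defined with $0\le\mathcal G_n-\phi_n\le n\Gamma(3/2)$. Lemma~\ref{lem:pargrowth} shows that $\mathcal G_n$ is convex, permutation invariant, translation equivariant and $1$-Lipschitz, hence continuous.

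\emph{Step 1: supersolution for every direction.} Fix $\v'\in\{0,1\}^n$. Pair \eqref{eq:parlinearineq} with test functions of the form $\chi_k(\tau)e^{-\tau}\psi(x)$, where $\psi\ge0$ and $\chi_k\uparrow\one_{(0,\infty)}$ is a time cutoff. Integrating by parts in $\tau$ and letting $k\to\infty$, the boundary term at $\tau=0$ produces $U_n(0,\cdot)=\phi_n$, and the tail is controlled by \eqref{eq:pargrowth}. This yields
\[
\mathcal G_n-\tfrac12D_{\v'}^2\mathcal G_n\ge\phi_n\quad\text{in }\mathcal D'(\R^n).
\]
As in Proposition~\ref{prop:hessian}, the resulting Laplacian bound together with convexity gives $\mathcal G_n\in C^{1,1}_{\rm loc}$. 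Mollifying as in the proof of Lemma~\ref{lem:comparison}, $\mathcal G_n^\eps+\eps$ is then a classical supersolution of \eqref{eq:pde}, and Lemma~\ref{lem:comparison} gives $u_n\le\mathcal G_n$. (Alternatively, this is \eqref{eq:finitegeometric}.)

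\emph{Step 2: equality on the sector for $\v$.} The same Laplace pairing applied to \eqref{eq:parfixed}, with $\psi$ supported in the open sector, gives $\mathcal G_n-\tfrac12D_{\v}^2\mathcal G_n=x_1$ in $\mathcal D'$ of the sector. By \cite{ishii1995equivalence}, or directly by mollifying along lines, this holds in the viscosity sense. Combining it with Step 1, at every point of the open sector where $\psi$ touches $\mathcal G_n$ from above we get
\[
\mathcal G_n-\tfrac12\max_{\v'}D_{\v'}^2\psi\le\mathcal G_n-\tfrac12D_{\v}^2\psi\le\phi_n\quad\text{after choosing}\ \v'=\v,
\]
so $\mathcal G_n$ is a subsolution of \eqref{eq:pde} there. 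More carefully, the a.e. Hessian of the $C^{1,1}$ function satisfies $D^2_{\v'}\le D^2_{\v}$ a.e., so the Hamiltonian equals $D^2_\v\mathcal G_n$ a.e. in the sector.

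\emph{Step 3: globalize and conclude (main obstacle).} Permutation invariance extends the subsolution property to every open chamber. The difficulty is the collision hyperplanes, a null set where the viscosity subsolution test is not automatic. I would first try to avoid the issue by proving $\mathcal G_n\le u_n$ differently: the a.e. equation $\mathcal G_n-\tfrac12\max_{\v'}D^2_{\v'}\mathcal G_n=\phi_n$ (valid a.e. on $\R^n$ by Steps 1--2 and symmetry) says that $\mathcal G_n$ is a strong $W^{2,\infty}_{\rm loc}$ solution of a uniformly convex-in-Hessian Bellman equation. For such equations, $C^{1,1}$ strong subsolutions are viscosity subsolutions: test $\mathcal G_n-\psi$ at a strict local max, perturb with small linear terms, and use Jensen's/Alexandrov's lemma to find nearby points of twice differentiability with $D^2\mathcal G_n\le D^2\psi+o(1)$, where the a.e. inequality applies. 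The ABP-type argument works here because the operator is monotone in $D^2$ and the set of perturbed maxima has positive measure. Comparison (Lemma~\ref{lem:comparison}) then gives $\mathcal G_n\le u_n$, hence $\mathcal G_n=u_n$, and Step 2 becomes \eqref{eq:fixed} for $u_n$.
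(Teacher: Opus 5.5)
Your proposal is correct and follows the paper's proof essentially step for step. The steps match: Laplace pairing of \eqref{eq:parlinearineq} and \eqref{eq:parfixed} with a time cutoff, $C^{1,1}$ regularity from the Laplacian bound, the linear equation for $\v$ in the viscosity sense by mollification, and the subsolution property in the open chambers by permutation invariance. At collisions both arguments use the same fact, namely that the maxima of the linearly perturbed $\mathcal G_n-\psi-\delta|x-x_0|^2$ form a set of positive measure because the Lipschitz gradient maps it onto a ball, and both finish with comparison in both directions. The only difference is minor: at that positive-measure set the paper applies the open-chamber viscosity inequality at a perturbed maximum off the null collision set, whereas you use points of twice differentiability and the a.e.\ Bellman equation. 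Also note that this operator is degenerate elliptic rather than uniformly elliptic, but monotonicity in the Hessian is all your argument actually uses.
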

\begin{proof}
We first pass the inequalities \eqref{eq:parlinearineq} and the equation \eqref{eq:parfixed} to the transform. By \eqref{eq:pargrowth} the integral \eqref{eq:laplace} converges absolutely, and
\begin{equation}\label{eq:gbound}
 0\leq\mathcal G_n-\phi_n\leq n\int_0^\infty e^{-\tau}\sqrt\tau\,d\tau
   =n\,\Gamma\!\left(\frac32\right)=\frac{n\sqrt\pi}{2}.
\end{equation}
Let $S$ be a smooth nondecreasing function with $S=0$ on $(-\infty,0]$ and $S=1$ on $[1,\infty)$, and for $N\geq2$ put $\chi_N(\tau)=e^{-\tau}\bigl(S(N\tau-1)-S(\tau-N)\bigr)$. Then $\chi_N\in C_c^\infty(0,\infty)$ is supported in $[1/N,N+1]$, $\chi_N(\tau)\to e^{-\tau}$ for every $\tau>0$, and
\[
 -\chi_N'(\tau)=\chi_N(\tau)-Ne^{-\tau}S'(N\tau-1)+e^{-\tau}S'(\tau-N).
\]
Test \eqref{eq:parlinearineq} against $\chi_N(\tau)\psi(x)$ with $0\leq\psi\in C_c^\infty(\R^n)$, and the equality \eqref{eq:parfixed} against $\chi_N(\tau)\psi(x)$ with $\psi\in C_c^\infty(\{x_1>\cdots>x_n\})$. In the $\partial_\tau$ term the three pieces of $-\chi_N'$ contribute as follows. The first converges to $\int_0^\infty e^{-\tau}U_n\,d\tau=\mathcal G_n$ by dominated convergence, since $U_n$ may be negative and $e^{-\tau}(|\phi_n(x)|+n\sqrt\tau)$ dominates it by \eqref{eq:pargrowth}. After the substitution $s=N\tau-1$, the second is $-\int_0^1S'(s)e^{-\tau}U_n(\tau,x)\,ds$ with $\tau=(s+1)/N\downarrow0$; since $\int_0^1S'=1$ and $U_n(\tau,x)\to\phi_n(x)$ as $\tau\downarrow0$ by \eqref{eq:pargrowth}, it converges to $-U_n(0,x)=-\phi_n(x)$ by dominated convergence. The third vanishes for each fixed $\tau$ once $N>\tau$, so it tends to zero by dominated convergence. In the second-order term, dominated convergence gives $\tfrac12\int\mathcal G_nD_{\v'}^2\psi$. Therefore
\begin{equation}\label{eq:gineq}
 \mathcal G_n-\frac12D_{\v'}^2\mathcal G_n\geq\phi_n
 \ \text{ in }\mathcal D'(\R^n)\ \text{ for every }\v',
 \qquad
 \mathcal G_n-\frac12D_{\v}^2\mathcal G_n=x_1
 \ \text{ in }\mathcal D'\bigl(\{x_1>\cdots>x_n\}\bigr).
\end{equation}

Next we prove regularity and the supersolution property. The function $\mathcal G_n$ is convex, permutation invariant and translation equivariant, being an average of functions with these properties. The first part of \eqref{eq:gineq} is \eqref{eq:linearineq} for $\mathcal G_n$, so the proof of Proposition~\ref{prop:hessian} gives $0\leq\Delta\mathcal G_n\leq8(\mathcal G_n-\phi_n)$, and hence $\mathcal G_n\in C^{1,1}(\R^n)$, the right side being bounded by \eqref{eq:gbound}. For the supersolution property, let $\rho_k\geq0$ be a smooth even mollifier supported in the ball of radius $1/k$, and put $\mathcal G_n^{(k)}=\mathcal G_n*\rho_k$. Testing the first part of \eqref{eq:gineq} against $\rho_k(x-\cdot)$ gives $D_{\v'}^2\mathcal G_n^{(k)}\leq2\bigl((\mathcal G_n-\phi_n)*\rho_k\bigr)$ pointwise, so
\[
 \mathcal G_n^{(k)}(x+h\v')+\mathcal G_n^{(k)}(x-h\v')-2\mathcal G_n^{(k)}(x)\leq2h^2\max_{|s|\leq h}\bigl((\mathcal G_n-\phi_n)*\rho_k\bigr)(x+s\v'),
\]
and letting $k\to\infty$ gives the second-difference inequality \eqref{eq:seconddiff} for $\mathcal G_n$ on every line in the direction $\v'$. If $\psi\in C^2$ touches $\mathcal G_n$ from below at $x_0$, the second differences of $\psi$ at $x_0$ are at most those of $\mathcal G_n$, so dividing by $h^2$ and letting $h\downarrow0$ gives $\mathcal G_n(x_0)-\tfrac12D_{\v'}^2\psi(x_0)\geq\phi_n(x_0)$ for every $\v'$, and the maximum over the finitely many $\v'$ may be taken inside.

Next we show that $\mathcal G_n$ satisfies \eqref{eq:fixed}. At points $y$ whose distance to the boundary of the sector exceeds $1/k$, testing the second part of \eqref{eq:gineq} against $\rho_k(y-\cdot)$ gives $\mathcal G_n^{(k)}(y)-\tfrac12D_{\v}^2\mathcal G_n^{(k)}(y)=y_1$, because $\rho_k$ is even. On a segment of the sector in the direction $\v$, the function $t\mapsto\mathcal G_n^{(k)}(x+t\v)$ is therefore, for large $k$, a classical solution of $g-\tfrac12g''=x_1+v_1t$, so that $\mathcal G_n^{(k)}(x+t\v)=x_1+v_1t+A_ke^{\kappa t}+B_ke^{-\kappa t}$. Since $\mathcal G_n^{(k)}\to\mathcal G_n$ locally uniformly, and a function of this form on a segment is determined by two of its values, $\mathcal G_n(x+t\v)$ has the same form. By Lemma~\ref{lem:lines}(v), $\mathcal G_n$ satisfies \eqref{eq:fixed} in the viscosity sense.

We now prove the subsolution property. On each open sector $\{x_{\sigma(1)}>\cdots>x_{\sigma(n)}\}$, permutation invariance transforms \eqref{eq:fixed} into the linear equation for the correspondingly permuted direction $\hat\v$, so if $\psi\in C^2$ touches $\mathcal G_n$ from above at a point of that sector, then $\mathcal G_n-\tfrac12\max_{\v'}D_{\v'}^2\psi\leq\mathcal G_n-\tfrac12D_{\hat\v}^2\psi\leq\phi_n$ there. It remains to treat collisions; this step uses only that $\mathcal G_n\in C^{1,1}$, and no second derivatives of $\mathcal G_n$. Suppose that $\psi\in C^2$, that $\mathcal G_n-\psi$ has a local maximum at $x_0$, and, for contradiction, that
\[
 \mathcal G_n(x_0)-\frac12\max_{\v'}D_{\v'}^2\psi(x_0)>\phi_n(x_0)+2\eta
\]
for some $\eta>0$. Fix $\delta>0$ with $\delta n<\eta$, let $w(x)=\mathcal G_n(x)-\psi(x)-\delta|x-x_0|^2$, and choose $r>0$ such that $\psi$ is $C^2$ on $\overline B_r(x_0)$, $w(x)\leq w(x_0)-\delta|x-x_0|^2$ there, and $\mathcal G_n-\tfrac12\max_{\v'}D_{\v'}^2\psi>\phi_n+\eta$ on $\overline B_r(x_0)$. For $|p|<\delta r$, every maximum point $y$ of $w(x)+p\cdot x$ over $\overline B_r(x_0)$ satisfies $\delta|y-x_0|^2\leq p\cdot(y-x_0)$, hence $|y-x_0|<r$, so $y$ is an interior maximum and $\nabla w(y)=-p$. The set $K$ of these maximum points, over all $|p|<\delta r$, therefore satisfies $\nabla w(K)\supseteq B_{\delta r}(0)$. Since $\nabla w$ is Lipschitz on $\overline B_r(x_0)$, and Lipschitz maps send sets of measure zero to sets of measure zero, $K$ is not contained in the collision set $\bigcup_{i\neq j}\{x_i=x_j\}$, which has measure zero. At a point $y\in K$ off the collision set, the open-sector case applied to the test function $\psi+\delta|x-x_0|^2-p\cdot x$ gives
\[
 \mathcal G_n(y)-\frac12\max_{\v'}D_{\v'}^2\psi(y)-\delta n\leq\phi_n(y),
\]
because $D_{\v'}^2\bigl(\delta|x-x_0|^2\bigr)=2\delta|\v'|^2\leq2\delta n$ and the linear term has no second derivatives. Since $\delta n<\eta$, this contradicts the choice of $r$. Hence $\mathcal G_n$ is a viscosity solution of \eqref{eq:pde}.

Finally, we identify $\mathcal G_n$ with $u_n$. By \eqref{eq:gbound}, $\mathcal G_n$ has at most linear growth, so Lemma~\ref{lem:comparison}, applied in both directions, gives $\mathcal G_n=u_n$, and $\v$ satisfies \eqref{eq:fixed}.
\end{proof}

\begin{proof}[Proof of Theorem~\ref{thm:finite}]
If $\v$ satisfied \eqref{eq:parfixed} on $(0,\infty)\times\{x_1>\cdots>x_n\}$, Lemma~\ref{lem:transferparabolic} would make $\v$ satisfy \eqref{eq:fixed}, contradicting Theorem~\ref{thm:nonexistence} for $n\geq6$. The remaining assertions follow from Remark~\ref{rem:selfsimilar}.
\end{proof}

\begin{remark}[Self-similarity, and why one time suffices]\label{rem:selfsimilar}
By \eqref{eq:parscaling} the value is determined by the single profile $\Theta=U_n(1,\cdot)$, which is convex and Lipschitz, through $U_n(\tau,\sqrt\tau\,y)=\sqrt\tau\,\Theta(y)$ for $\tau>0$. For $\psi\in C_c^\infty((0,\infty)\times\R^n)$ put $\zeta_\tau(y)=\psi(\tau,\sqrt\tau\,y)$, so that $\partial_\tau[\zeta_\tau(y)]=\partial_\tau\psi+(2\sqrt\tau)^{-1}y\cdot\nabla_x\psi$, $\nabla\zeta_\tau=\sqrt\tau\,\nabla_x\psi$ and $D_{\v}^2\zeta_\tau=\tau D_{\v}^2\psi$, all evaluated at $(\tau,\sqrt\tau\,y)$. Substituting $x=\sqrt\tau\,y$ gives the exact identity
\begin{equation}\label{eq:selfsimilar}
 \iint U_n\Bigl(\partial_\tau\psi+\frac12D_{\v}^2\psi\Bigr)dx\,d\tau
 =-\frac12\int_0^\infty\tau^{(n-1)/2}\int\Theta(y)\bigl((n+1)\zeta_\tau+y\cdot\nabla\zeta_\tau-D_{\v}^2\zeta_\tau\bigr)dy\,d\tau ,
\end{equation}
because the remaining term $\Theta(y)\,\partial_\tau\bigl(\tau^{(n+1)/2}\zeta_\tau(y)\bigr)$ integrates to zero in $\tau$ for each fixed $y$. The inner integral is the $\tau$-free identity
\[
 \Theta-y\cdot D\Theta-D_{\v}^2\Theta=0
\]
tested against $\zeta_\tau$. Hence, by \eqref{eq:selfsimilar}, \eqref{eq:parfixed} is equivalent to the $\tau$-free identity in distributions on the sector. Indeed, the sector is a cone, so each $\zeta_\tau$ is a test function on the sector whenever $\psi$ is a test function on $(0,\infty)$ times the sector, and conversely, testing \eqref{eq:parfixed} with $\psi(\tau,x)=\chi(\tau)\zeta(x/\sqrt\tau)$, for which $\zeta_\tau=\chi(\tau)\zeta$, recovers the $\tau$-free identity whenever $0\leq\chi\in C_c^\infty(0,\infty)$ is not identically zero, since then $\int\tau^{(n-1)/2}\chi\,d\tau>0$. Since $U_n$ is only known to be Lipschitz, we say that a rank direction $\v$ attains the Hamiltonian maximum throughout the spatial sector at a single time $\tau_0>0$ if $\Theta$ satisfies this identity; by \eqref{eq:parscaling}, this is the identity $U_n-x\cdot DU_n-\tau_0D_{\v}^2U_n=0$ at $\tau=\tau_0$, which is $\partial_\tau U_n=\tfrac12D_{\v}^2U_n$ at $\tau=\tau_0$ written through the scaling. This notion is unambiguous because $\Theta\in C^{1,1}_{\mathrm{loc}}$. Indeed, averaging \eqref{eq:parlinearineq} over $\v'$ uniform on $\{0,1\}^n$, as in Section~\ref{sec:basic}, and using the convexity of $\Theta$ gives $0\leq\Delta\Theta\leq4(\Theta-y\cdot D\Theta)$ in distributions, and the right side is locally bounded, so the Hessian-measure argument of Section~\ref{sec:basic} applies. Hence the identity holds in distributions if and only if it holds almost everywhere, with the classical second derivatives of $\Theta$. Three consequences follow. First, an identity on any nonempty open time interval extends to all positive times, because the converse argument above only uses $\chi$ supported in that interval. Second, a direction that attains the maximum throughout the sector at a single time $\tau_0>0$ satisfies the $\tau$-free identity, and hence \eqref{eq:parfixed} at every positive time. Third, a direction that depends on the remaining time is, at any single time, a rank direction of this kind, so allowing such dependence cannot help.
\end{remark}

\begin{remark}[Control interpretation]\label{rem:parcontrol}
There is a direct control counterpart of Lemma~\ref{lem:transferparabolic}, which avoids interchanging a supremum with a time integral. If a single rank policy $\v$ attained the value \eqref{eq:parcontrol} for every horizon, then $\E\phi_n(X^\alpha_\tau)\leq\E\phi_n(X^v_\tau)$ for every admissible $\alpha$ and every $\tau>0$. Integrating this inequality against $e^{-\tau}d\tau$, which is legitimate by \eqref{eq:pargrowth}, makes $\v$ optimal for the geometric objective $\E\phi_n(X_T)$ with $T$ an independent unit exponential. This is a statement about policies rather than about the equation, and turning it into \eqref{eq:fixed} would require a verification theorem; the proof of Lemma~\ref{lem:transferparabolic} works directly with the equations instead.
\end{remark}

\begin{remark}[Relation to the four-expert transfer]\label{rem:bez}
This direction of transfer is the principle used for COMB by Bayraktar, Ekren and Zhang~\cite[Proposition 3.1]{bayraktar2020finite}, whose identity $u(x)=\int_0^\infty e^{-\tau}U_n(\tau,x)\,d\tau$ is \eqref{eq:laplace}. Their statement assumes that the finite-horizon value is $C^{1,2}$ and satisfies an integrability condition on its spatial Hessian. Lemma~\ref{lem:transferparabolic} removes both hypotheses: it uses only the dynamic programming inequalities of Lemma~\ref{lem:parineq}, the growth bound \eqref{eq:pargrowth}, Lemma~\ref{lem:lines}, the $C^{1,1}$ argument of Section~\ref{sec:basic} and the comparison principle of Lemma~\ref{lem:comparison}, and it produces the viscosity form of \eqref{eq:fixed} rather than a pointwise one. Kobzar, Kohn and Wang~\cite{kobzar2020newgeometric} use the same transform in time to build sub- and supersolutions of the geometric problem from finite-horizon ones, which gives explicit bounds on the value.

The transform \eqref{eq:laplace} also reproduces the coefficient $1/2$ of \eqref{eq:pde} exactly: integrating $\partial_\tau U_n$ against $e^{-\tau}$ produces $\mathcal G_n-\phi_n$, while the linear operator passes through unchanged, so $\partial_\tau U_n=\tfrac12D_{\v}^2U_n$ becomes $\mathcal G_n-\tfrac12D_{\v}^2\mathcal G_n=\phi_n$. This is the normalization of~\cite[Remark~3.5]{bayraktar2020finite}, where the transform with discount rate $\lambda$ is matched against $\lambda u_\lambda-\tfrac12\max_{\v}D_{\v}^2u_\lambda=\phi_n$; our case is $\lambda=1$.
\end{remark}

\begin{remark}[What is not claimed]\label{rem:parnotclaimed}
Theorem~\ref{thm:finite} concerns optimality throughout the ordered sector. It does not rule out a fixed direction being optimal at a particular initial state, such as the origin, for a particular horizon. It does not identify the optimal switching regions, and it does not settle $C^2$ regularity of $U_n$ or of $u_n$.
\end{remark}

\appendix
\section{Bounds on the value at the origin}\label{app:origin}

This appendix proves the bounds on $u_n(0)$ quoted in Remark~\ref{rem:origin} and compares them. Throughout, $Z_1,\dots,Z_n$ are independent standard normal random variables, $X_n=\max_{i\leq n}Z_i$, and $\Phi$ is the standard normal distribution function. The upper bounds of this paper are
\begin{equation}\label{eq:originbounds}
 B_n=\frac{n-1}{n\kappa}\Bigl(1+\log\frac{n\kappa h_{n-1}(0)}{n-1}\Bigr),\qquad
 C_n=\sqrt{\frac{\pi\log n}8}.
\end{equation}
In the normalization of Remark~\ref{rem:origin}, the heat-based, exponential-weights and max-potential players of Kobzar, Kohn and Wang~\cite{kobzar2020newgeometric} give the upper bounds
\begin{equation}\label{eq:kkwbounds}
 H_n=\sqrt{\frac\pi8}\,\E X_n,\qquad
 W_n=\sqrt{\frac{\log n}2},\qquad
 M_n=\begin{cases}\sqrt{(n-1)/8},&n\text{ even},\\[2pt] \frac{n-1}n\sqrt{(n+1)/8},&n\text{ odd},\end{cases}
\end{equation}
and their heat-based adversary gives the lower bound $L_n=\tfrac{\sqrt\pi}4\sqrt{2c_n}\,\E X_n$, where $2c_n=n/(n-1)$ for even $n$ and $2c_n=(n+1)/n$ for odd $n$. The bound $M_n$ is the leading-order constant of the max-potential bound in~\cite[Section~7, Example~3]{kobzar2020newgeometric}. Some comparisons below rest on finitely many numerical inequalities, which are certified in interval arithmetic by \texttt{origin\_bounds.py}.

\begin{lemma}\label{lem:gaussmax}
For $n\geq2$ let $f_n=1-\Phi^n-(1-\Phi)^n$.
\begin{enumerate}
\item[(i)] $\E X_n=\int_0^\infty f_n(s)\,ds$, where $f_n$ is nonnegative and decreasing on $[0,\infty)$. Consequently $\E X_n\geq\ell_n:=10^{-3}\sum_{j=1}^{10^4}f_n(j/10^3)$. Moreover, $\E X_n$ is nondecreasing in $n$.
\item[(ii)] For every $n\geq9$, $\E X_n>\sqrt{\log n}$, that is, $C_n<H_n$.
\end{enumerate}
\end{lemma}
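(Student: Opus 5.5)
For (i), I will start from the tail-integral formula for a real random variable, $\E X_n=\int_0^\infty\P(X_n>s)\,ds-\int_0^\infty\P(X_n<-s)\,ds$. Since $\P(X_n>s)=1-\Phi(s)^n$ and $\P(X_n<-s)=\Phi(-s)^n=(1-\Phi(s))^n$ by the symmetry of the normal law, this becomes $\E X_n=\int_0^\infty f_n(s)\,ds$. Both integrals converge because the Gaussian tails are integrable.

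Nonnegativity follows from convexity. With $p=\Phi(s)\in[\tfrac12,1)$ we have $p^n+(1-p)^n\leq p+(1-p)=1$ for $n\geq1$. For monotonicity, put $g(p)=p^n+(1-p)^n$, so that $g'(p)=n(p^{n-1}-(1-p)^{n-1})\geq0$ for $p\geq\tfrac12$. Since $\Phi$ is increasing, $f_n=1-g(\Phi)$ is nonincreasing on $[0,\infty)$. Because $f_n$ is nonnegative and decreasing, the right Riemann sum bounds the integral from below:
\[
\int_0^{10}f_n\,ds\geq10^{-3}\sum_{j=1}^{10^4}f_n(j/10^3).
\]
Dropping the nonnegative tail on $[10,\infty)$ then gives $\E X_n\geq\ell_n$. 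Finally, $\E X_n$ is nondecreasing in $n$ because $X_{n+1}\geq X_n$ pointwise on a common probability space.

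For (ii), note first that $C_n<H_n$ is equivalent to $\sqrt{\log n}<\E X_n$, since $C_n=\sqrt{\pi/8}\sqrt{\log n}$ and $H_n=\sqrt{\pi/8}\,\E X_n$. I will split the range of $n$ into a finite range and a tail.

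For the finite range $9\leq n\leq N_0$, I will verify $\ell_n>\sqrt{\log n}$ directly in interval arithmetic; this is the certificate provided by \texttt{origin\_bounds.py}. To cover the range efficiently, I will use monotonicity in blocks: if $\ell_{n_0}>\sqrt{\log n_1}$, then every $n\in[n_0,n_1]$ satisfies $\E X_n\geq\E X_{n_0}>\sqrt{\log n_1}\geq\sqrt{\log n}$. So only finitely many evaluations are needed.

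For large $n$, I need an explicit lower bound of the form $\E X_n\geq\sqrt{2\log n}-a\log\log n/\sqrt{\log n}-b$. I will take $t_n=\sqrt{2\log n-\log\log n-c}$ and use the Mills-ratio lower bound $1-\Phi(t)\geq\frac{t}{1+t^2}\varphi(t)$. This gives $n(1-\Phi(t_n))\geq K\sqrt{\log n}\to\infty$, so $\P(X_n\leq t_n)=\Phi(t_n)^n\leq e^{-n(1-\Phi(t_n))}$ is tiny. I will then write
\[
\E X_n\geq t_n\bigl(1-\Phi(t_n)^n\bigr)-\E[X_n^-],
\]
and bound the negative part by $\E[X_n^-]\leq\int_0^\infty\Phi(-s)^n\,ds\leq 2^{-n}\cdot O(1)$.

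The step I expect to be the main obstacle is making these constants explicit enough that the asymptotic bound exceeds $\sqrt{\log n}$ already at some moderate $N_0$ where the numerical check can take over. Since $\sqrt{2\log n}-\sqrt{\log n}=(\sqrt2-1)\sqrt{\log n}$ grows, this should happen at a manageable $N_0$. If the crude estimates give too large an $N_0$, I will tighten the choice of $c$, or instead bound $\E X_n\geq\sqrt{\log n}$ directly with $t=\sqrt{\log n}+1$, where $n(1-\Phi(t))$ grows like $n^{1/2}e^{-\sqrt{\log n}}/\sqrt{\log n}$.
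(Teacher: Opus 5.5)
You follow the paper's argument for part (i), for the block certificate on $9\le n\le N_0$, and for the large-$n$ split $\E X_n\ge t\,\P(X_n>t)-\E X_n^-$ with a Mills-ratio lower bound and $\E X_n^-\le2^{1-n}/\sqrt{2\pi}$. The one place you depart from it is the choice of the level $t$, and there the proposal has a miscalculation.

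With $t_n^2=2\log n-\log\log n-c$ you get $n\varphi(t_n)=e^{c/2}\sqrt{\log n}/\sqrt{2\pi}$. However, the Mills factor $t_n/(1+t_n^2)\sim1/\sqrt{2\log n}$ cancels exactly this $\sqrt{\log n}$. So $n\bigl(1-\Phi(t_n)\bigr)\to e^{c/2}/(2\sqrt\pi)$ stays bounded; this is the Gumbel centring of the Gaussian maximum. Consequently $\Phi(t_n)^n$ tends to a positive constant instead of being tiny. Your claimed bound $n(1-\Phi(t_n))\ge K\sqrt{\log n}\to\infty$ drops that factor, so the step fails as written.

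The step is easy to repair, because you only need $1-\Phi(t_n)^n$ to exceed $\sqrt{\log n}/t_n$, and that ratio tends to $2^{-1/2}$. Any fixed $c$ with $e^{c/2}/(2\sqrt\pi)$ comfortably above $\log\bigl(1/(1-2^{-1/2})\bigr)\approx1.23$ works; for instance $c=6$ makes the argument go through from $n$ of a few thousand onward. Your fallback $t=\sqrt{\log n}+1$ is also correct as stated, since there $n(1-\Phi(t))$ really does diverge.

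The paper avoids the borderline altogether. It takes $b=\sqrt{(3/2)\log n}$, a fixed fraction below the Gumbel level, so that a one-line estimate gives $n(1-\Phi(b))\ge n^{1/4}/(2\sqrt{3\pi\log n})$, which diverges. It then commits to $N_0=10^{13}$, where this quantity exceeds $25$, and the certified blocks cover $9\le n\le2.5\cdot10^{13}$.

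A corrected version of your sharper level would shrink the range the numerical certificate must cover. Either way, you still have to fix a concrete $N_0$ and verify the inequality non-asymptotically for every $n\ge N_0$, which your proposal leaves open.
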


\begin{proof}
For (i), since $\Phi(-s)=1-\Phi(s)$,
\[
 \E X_n=\int_0^\infty\P(X_n>s)\,ds-\int_0^\infty\P(X_n<-s)\,ds
 =\int_0^\infty\bigl(1-\Phi(s)^n-(1-\Phi(s))^n\bigr)\,ds .
\]
On $[0,\infty)$ we have $1-\Phi\leq\Phi$, so $f_n'=n\Phi'\bigl((1-\Phi)^{n-1}-\Phi^{n-1}\bigr)\leq0$, and $f_n\geq0$ because $\Phi^n+(1-\Phi)^n\leq1$. A right Riemann sum of a nonnegative decreasing function is at most its integral, which gives $\E X_n\geq\ell_n$. Finally, $X_{n+1}\geq X_n$.

For (ii), suppose first that $n\geq10^{13}$, and let $b=\sqrt{(3/2)\log n}\geq1$. With $Q=1-\Phi$, the Mills ratio bound $Q(b)\geq b\Phi'(b)/(b^2+1)\geq\Phi'(b)/(2b)$ gives
\[
 nQ(b)\geq\frac{n^{1/4}}{2\sqrt{3\pi\log n}}>25,
\]
since the middle expression increases with $n$ and exceeds $25$ at $n=10^{13}$. Hence $\E(X_n)_+\geq b\,\P(X_n\geq b)\geq b\bigl(1-(1-Q(b))^n\bigr)>b(1-e^{-25})$, while $Q\leq\frac12$ on $[0,\infty)$ gives $\E(X_n)_-=\int_0^\infty Q(s)^n\,ds\leq2^{1-n}\int_0^\infty Q(s)\,ds=2^{1-n}/\sqrt{2\pi}$. Therefore
\[
 \E X_n\geq b\bigl(1-e^{-25}\bigr)-\frac{2^{1-n}}{\sqrt{2\pi}}>\sqrt{\log n}.
\]
For $9\leq n\leq10^{13}$ we use (i): if $\ell_m>\sqrt{\log N}$ for integers $m\leq N$, then $\E X_n\geq\E X_m\geq\ell_m>\sqrt{\log n}$ for every $m\leq n\leq N$. The script \texttt{origin\_bounds.py} encloses $\Phi$ by its Taylor series with the Lagrange remainder, evaluates $\ell_m$ in interval arithmetic, and finds eleven such blocks, starting from $m=9$, that cover $9\leq n\leq2.5\cdot10^{13}$.
\end{proof}

\begin{lemma}\label{lem:origin}
The following hold.
\begin{enumerate}
\item[(i)] For every $n\geq2$, $u_{n-1}(0)\leq u_n(0)\leq h_{n-1}(0)=(2n-2)!!/(4\kappa(2n-3)!!)$.
\item[(ii)] For every $n\geq4$, $u_n(0)\leq B_n<h_{n-1}(0)$.
\item[(iii)] For every $n\geq2$, $L_n\leq u_n(0)\leq C_n$, and $u_n(0)\sim C_n$ as $n\to\infty$. Moreover, $L_n>u_5(0)$ for every $n\geq6$.
\item[(iv)] The sequence $B_n/\sqrt{\log n}$ is increasing for $n\geq4$, and $B_n>C_n$ for every $n\geq14$.
\end{enumerate}
\end{lemma}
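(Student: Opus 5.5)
For (i), the lower bound $u_{n-1}(0)\le u_n(0)$ comes from Lemma~\ref{lem:removal} with $j=n-1$ evaluated at $x=0$. The upper bound comes from Proposition~\ref{prop:barrier}: $u_n\le\bar u_n$, and \eqref{eq:barrier} gives $\bar u_n(0)=h_{n-1}(0)$. The closed form for $h_{n-1}(0)$ is \eqref{eq:originprofile}.

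For (ii), I would combine convexity with the one-leader upper bound. The points $se_1,\dots,se_n$ average to $(s/n)\one$. Convexity, permutation invariance and translation equivariance then give
\[
 u_n(0)+\frac sn=u_n\bigl(\tfrac sn\one\bigr)\le u_n(se_1)\le s+e^{-\kappa s}h_{n-1}(0)\qquad(s\ge0),
\]
where the last step is \eqref{eq:upper} with $z=0$. So $u_n(0)\le\min_{s\ge0}\bigl(\tfrac{n-1}ns+h_{n-1}(0)e^{-\kappa s}\bigr)$. Put $y=n\kappa h_{n-1}(0)/(n-1)$. The minimizer is $e^{-\kappa s}=1/y$, which is admissible when $y\ge1$, and the minimum value is exactly $B_n$. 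For the strict inequality, write $B_n=\tfrac{n-1}{n\kappa}(1+\log y)$ and $h_{n-1}(0)=\tfrac{n-1}{n\kappa}y$, and use $1+\log y<y$ for $y\ne1$. It remains to check that $y>1$ for $n\ge4$. This follows from \eqref{eq:originprofile}, since $h_{n-1}(0)$ grows by the factor $\tfrac{2m}{2m-1}$ at each step ($y=1$ exactly at $n=2$).

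For (iii), the upper bound $u_n(0)\le C_n$ is the computation already carried out in Remark~\ref{rem:finitegeometric}. The lower bound $L_n\le u_n(0)$ is the heat-based adversary bound of Kobzar, Kohn and Wang, translated through the normalization $2u_n(x/2)$. For the asymptotics, use $\E X_n\sim\sqrt{2\log n}$ and $2c_n\to1$. These give $L_n\sim\tfrac{\sqrt\pi}4\sqrt{2\log n}=C_n$, so the squeeze yields $u_n(0)\sim C_n$.

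For $L_n>u_5(0)=45\pi^2/(512\sqrt2)$ when $n\ge6$, I would argue in two ranges. For large $n$, use $2c_n\ge1$, the monotonicity of $\E X_n$ and the Riemann-sum lower bound $\ell_m$ from Lemma~\ref{lem:gaussmax}(i): once $\tfrac{\sqrt\pi}4\ell_{N_0}>u_5(0)$, the bound holds for every $n\ge N_0$, and $N_0=8$ should suffice. The cases $n=6,7$ must keep the factor $\sqrt{2c_n}$, and $n=6$ is very tight: the two sides are about $1.388$ versus $1.384$ after scaling by $4/\sqrt\pi$. These finitely many inequalities would be certified in interval arithmetic by the script.

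For (iv), write $B_n$ through $\log h_{n-1}(0)=\sum_{m<n}\log\tfrac{2m}{2m-1}+\text{const}$. This grows like $\tfrac12\log n$, so $B_n$ grows like $\tfrac1{2\kappa}\log n$, and $B_n/\sqrt{\log n}$ should increase. To prove the monotonicity I would compare consecutive terms. I would bound the increment $\log\tfrac{2n}{2n-1}\ge\tfrac1{2n}$ and the change of the prefactor $\tfrac{n-1}n$ explicitly, and compare these against the decrease of $1/\sqrt{\log n}$. A direct check would cover small $n$ where such crude bounds fail.

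Since $C_n/\sqrt{\log n}$ is constant, the monotonicity reduces $B_n>C_n$ for all $n\ge14$ to the single numerical check at $n=14$. I expect the main obstacles to be this monotonicity estimate and the tight $n=6$ case of (iii), both of which lean on certified numerics.
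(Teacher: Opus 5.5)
Your proofs of (i)--(iii), and your reduction of $B_n>C_n$ for all $n\geq14$ to the single check at $n=14$, match the paper. This includes the split into $n\geq8$ (via $\tfrac{\sqrt\pi}4\ell_8>u_5(0)$) and $n=6,7$ (keeping $\sqrt{2c_n}$).

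There is one small correction in (ii). The quantity $y_n=n\kappa h_{n-1}(0)/(n-1)$ equals $1$ at $n=3$ as well as at $n=2$, since $\kappa h_2(0)=2/3$. Also, the prefactor $n/(n-1)$ decreases, so the growth of $h_{n-1}(0)$ alone does not give $y_n>1$. What works is the ratio $y_{n+1}/y_n=2(n^2-1)/(n(2n-1))$, which equals $1$ at $n=2$ and exceeds $1$ for $n\geq3$. Hence $y_n>y_3=1$ for $n\geq4$. The paper instead compares $\kappa h_{n-1}(0)=4/5,\,32/35,\,64/63$ at $n=4,5,6$ with $(n-1)/n$ and uses monotonicity.

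The genuine difference is the monotonicity of $B_n/\sqrt{\log n}$ in (iv).

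\emph{The paper's route.} It interpolates $\kappa h_{n-1}(0)=\tfrac{\sqrt\pi}4\Gamma(x)/\Gamma(x-\tfrac12)$ to real $x\geq4$. Log-convexity of $\Gamma$ gives $A(x)=1+\log y(x)\leq\log x$, and the series for $(\log\Gamma)''$ gives $(\log a)'(x)>1/(2x)$. It then shows $\tfrac{d}{dx}\log(B_x/\sqrt{\log x})>0$ on all of $[4,\infty)$, so no cases remain.

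\emph{Your route.} The consecutive-term comparison replaces these Gamma-function facts with elementary bounds on $\log\tfrac{2m}{2m-1}$, which is legitimate. But be explicit about what must be controlled. The quantity $\tfrac12\log\tfrac{\log(n+1)}{\log n}\approx\tfrac1{2n\log n}$ has to be beaten by the \emph{relative} increment $\log(A_{n+1}/A_n)$ plus $\log\tfrac{n^2}{n^2-1}$. So besides the lower bound $A_{n+1}-A_n\geq\tfrac1{2n}-\tfrac1{n^2-1}$, you need an explicit upper bound $A_{n+1}\leq\tfrac12\log n+O(1)$. This is the discrete counterpart of the paper's $A(x)\leq\log x$, and your sketch does not mention it.

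Since $A_n=\tfrac12\log n+O(1)$, the margin is asymptotically a factor of two. Summing $\log\tfrac{2m}{2m-1}\leq\tfrac1{2m-1}$ gives an explicit bound of the required form, and any small $n$ it misses can go to your direct check.

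The paper's argument is shorter and uniform in $x$. Yours is more elementary, but only once that extra upper bound is written down.
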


\begin{proof}
For (i), Lemma~\ref{lem:removal} with $j=n-1$ gives $u_{n-1}(0)\leq u_n(0)$, and \eqref{eq:upper} with $s=0$ and $z=0$ gives $u_n(0)\leq h_{n-1}(0)$, whose value is \eqref{eq:originprofile}.

For (ii), let $s\geq0$. The $n$ permutations of $(s,0,\dots,0)$ average to $(s/n)\one$, so convexity, permutation invariance and translation equivariance give $u_n(0)+s/n\leq u_n(s,0,\dots,0)$, while \eqref{eq:upper} gives $u_n(s,0,\dots,0)\leq s+h_{n-1}(0)e^{-\kappa s}$. Hence
\[
 u_n(0)\leq\Bigl(1-\frac1n\Bigr)s+h_{n-1}(0)e^{-\kappa s}\qquad\text{for every }s\geq0.
\]
The right side is strictly convex in $s$ and equals $h_{n-1}(0)$ at $s=0$. If $\kappa h_{n-1}(0)>(n-1)/n$, its minimum over $s\geq0$ is attained at $s_*=\kappa^{-1}\log\bigl(n\kappa h_{n-1}(0)/(n-1)\bigr)>0$, where its value is $B_n$, and therefore $B_n<h_{n-1}(0)$. By \eqref{eq:originprofile}, $\kappa h_{n-1}(0)=(2n-2)!!/(4(2n-3)!!)$ increases with $n$, since passing from $n$ to $n+1$ multiplies it by $2n/(2n-1)$, and it equals $4/5$, $32/35$ and $64/63$ for $n=4,5,6$. Thus $\kappa h_{n-1}(0)>(n-1)/n$ for every $n\geq4$.

For (iii), the lower bound is the heat-based bound of~\cite{kobzar2020newgeometric}, and the upper bound is proved in Remark~\ref{rem:finitegeometric}. Since $2c_n\to1$ and $\E X_n\sim\sqrt{2\log n}$, we have $L_n/C_n\to1$, so $u_n(0)\sim C_n$. Finally, since $2c_n>1$, Lemma~\ref{lem:gaussmax}(i) gives $L_n>\tfrac{\sqrt\pi}4\E X_n\geq\tfrac{\sqrt\pi}4\ell_8$ for $n\geq8$, and the certified values of $\ell_6$, $\ell_7$ and $\ell_8$ give
\[
 L_6-u_5(0)>0.0014,\qquad L_7-u_5(0)>0.026,\qquad \tfrac{\sqrt\pi}4\ell_8-u_5(0)>0.017.
\]

For (iv), note that $\kappa h_{n-1}(0)=a(n)$ with $a(x)=\tfrac{\sqrt\pi}4\Gamma(x)/\Gamma(x-\tfrac12)$, so for real $x\geq4$ we may write $\kappa B_x=(1-\tfrac1x)A(x)$ with $A(x)=1+\log\bigl(xa(x)/(x-1)\bigr)$. Log-convexity of $\Gamma$ gives $\Gamma(x)^2\leq\Gamma(x-\tfrac12)\Gamma(x+\tfrac12)=(x-\tfrac12)\Gamma(x-\tfrac12)^2$, so $a(x)<\sqrt{\pi x}/4$, and since $e\sqrt{\pi x}\leq4(x-1)$ for $x\geq4$, this gives $A(x)\leq\log x$. Moreover, $(\log\Gamma)''(t)=\sum_{k\geq0}(t+k)^{-2}$ is decreasing and exceeds $1/t$, so $(\log a)'(x)=\int_{x-1/2}^x(\log\Gamma)''(t)\,dt>1/(2x)$ and $A'(x)>(x-3)/(2x(x-1))$. Using these bounds and $x-1>2\log x$, we obtain for $x\geq4$
\[
 \frac{d}{dx}\log\frac{B_x}{\sqrt{\log x}}
 =\frac1{x(x-1)}+\frac{A'(x)}{A(x)}-\frac1{2x\log x}
 >\frac{\log x-1}{x(x-1)A(x)}>0 .
\]
Thus $B_n/\sqrt{\log n}$ is increasing for $n\geq4$, and it remains to check that $B_{14}>C_{14}$. By \eqref{eq:originbounds}, $\kappa B_{14}=\tfrac{13}{14}(1+\log T)$ with $T=14\kappa h_{13}(0)/13=2^{22}/2414425$, and $(\kappa C_{14})^2=\tfrac\pi4\log14$. Partial sums of the exponential series, with a geometric bound for the tail in the first case, show that $e^{69/125}<T$ and $e^{66/25}>14$. Together with $\pi<22/7$, this gives
\[
 (\kappa B_{14})^2-(\kappa C_{14})^2>\Bigl(\frac{13}{14}\Bigl(1+\frac{69}{125}\Bigr)\Bigr)^2-\frac{22}7\cdot\frac{66}{25}\cdot\frac14=\frac{1996}{765625}>0.
\]
These rational inequalities are checked exactly in \texttt{origin\_bounds.py}.
\end{proof}

\begin{remark}[Numerical comparison]\label{rem:origintable}
The table below lists the bounds for $4\leq n\leq14$, together with the exact values of~\cite{calder2026explicit} for $n\leq5$; the smallest upper bound in each row is in bold. The displayed values are rounded. The finite strict comparisons below are certified in interval arithmetic by \texttt{origin\_bounds.py}, using $H_n\geq\sqrt{\pi/8}\,\ell_n$ from Lemma~\ref{lem:gaussmax}(i); the equalities for $n=2,3$ follow from the exact formulas, and the comparisons for larger $n$ follow from Lemmas~\ref{lem:gaussmax} and~\ref{lem:origin} and the arguments below.
\begin{center}
\begin{tabular}{rcccccccc}
\toprule
 & & & \multicolumn{3}{c}{upper bounds, this paper} & \multicolumn{3}{c}{upper bounds of~\cite{kobzar2020newgeometric}}\\
\cmidrule(lr){4-6}\cmidrule(lr){7-9}
$n$ & $u_n(0)$ & $L_n$ & $h_{n-1}(0)$ & $B_n$ & $C_n$ & $H_n$ & $W_n$ & $M_n$\\
\midrule
4 & 0.5554 & 0.5267 & 0.5657 & \textbf{0.5646} & 0.7378 & 0.6451 & 0.8326 & 0.6124 \\
5 & 0.6134 & 0.5645 & 0.6465 & \textbf{0.6412} & 0.7950 & 0.7288 & 0.8971 & 0.6928 \\
6 & -- & 0.6151 & 0.7183 & \textbf{0.7060} & 0.8388 & 0.7941 & 0.9465 & 0.7906 \\
7 & -- & 0.6405 & 0.7836 & \textbf{0.7618} & 0.8742 & 0.8474 & 0.9864 & 0.8571 \\
8 & -- & 0.6744 & 0.8439 & \textbf{0.8108} & 0.9037 & 0.8921 & 1.0197 & 0.9354 \\
9 & -- & 0.6936 & 0.9002 & \textbf{0.8543} & 0.9289 & 0.9306 & 1.0481 & 0.9938 \\
10 & -- & 0.7187 & 0.9531 & \textbf{0.8935} & 0.9509 & 0.9643 & 1.0730 & 1.0607 \\
11 & -- & 0.7342 & 1.0033 & \textbf{0.9290} & 0.9704 & 0.9942 & 1.0950 & 1.1134 \\
12 & -- & 0.7540 & 1.0511 & \textbf{0.9615} & 0.9878 & 1.0210 & 1.1147 & 1.1726 \\
13 & -- & 0.7670 & 1.0968 & \textbf{0.9915} & 1.0036 & 1.0453 & 1.1325 & 1.2211 \\
14 & -- & 0.7833 & 1.1406 & 1.0192 & \textbf{1.0180} & 1.0674 & 1.1487 & 1.2748 \\
\bottomrule
\end{tabular}
\end{center}
For $n=2,3$ the exact values equal $h_{n-1}(0)$, since $\kappa u_2(0)=\kappa h_1(0)=1/2$ and $\kappa u_3(0)=\kappa h_2(0)=2/3$, and for $n=4,5$ they lie below every upper bound. The bound $B_n$ is smaller than $H_n$, $W_n$ and $M_n$ for $4\leq n\leq23$ and smaller than $C_n$ for $4\leq n\leq13$, so it is the smallest of these upper bounds for $6\leq n\leq13$. For $n\geq9$, the bound $C_n$ is smaller than $H_n$ by Lemma~\ref{lem:gaussmax}(ii), smaller than $W_n$ because $\pi<4$, and smaller than $M_n$ because $M_n^2\geq(n-2)/8>(\pi\log n)/8$. Together with $C_n<B_n<h_{n-1}(0)$ for $n\geq14$ from Lemma~\ref{lem:origin}, this makes $C_n$ the smallest of these upper bounds for every $n\geq14$.
\end{remark}

\bibliographystyle{abbrv}
\bibliography{main}

\end{document}